\newcommand{\boldltwo}{{\bm{\ell}_2}} % for the l2 norm...
\newtheorem{theorem}{Theorem}[section]
\newtheorem{assumption}[theorem]{Assumption}
\newtheorem*{theorem*}{Theorem}
\newtheorem*{claim*}{Claim}
\newtheorem*{lemma*}{Lemma}
\newtheorem{defn}[theorem]{Definition}
\newtheorem{notation}[theorem]{Notation}
\newtheorem{lemma}[theorem]{Lemma}
\newtheorem{corollary}[theorem]{Corollary}
\newtheorem*{corollary*}{Corollary}
\newtheorem{obs}[theorem]{Observation}
\newtheorem{prop}[theorem]{Proposition}
\newtheorem{problem}[theorem]{Problem}
\newcommand{\Z}{{\mathbb Z}}
\newcommand{\N}{{\mathbb N}}
\newcommand{\R}{\mathbb{R}}
\newcommand{\E}{\mathop \mathbb{E}}
\newcommand{\showcomments}{yes}
\newsavebox{\commentbox}
\subjclass[2020]{
60B20, %please fix this- can you decide where inthis category, or elsewhere? was a category declared for the main citation? RANDOM MATRICES
 05C80, % rANDOM GRAPHS
20E07, %Subgroup theorems; subgroup growth
20E05, %Free nonabelian groups
}
\keywords{Growth of groups, free groups, random graphs, non-backtracking matrix, configuration model}
\date{\today}
\thanks{ M.L. received funding by the Hellenic Foundation for Research and Innovation (H.F.R.I.) under the
“First Call for Research Projects to support Faculty members and Researchers and the procurement
of high-cost research equipment grant”, Project Number: 1034, and from the European Union’s
Horizon 2020 research and innovation program under the Marie Sklodowska-Curie grant agreement
No 101034255. 
D.T.W research supported by NSERC.}
\title[Non backtracking matrix]{On the spectral radius of the non backtracking matrix of the configuration model
%\\ via nonbacktracking matrix of the configuration model
}
\author{Michail Louvaris}
\address{Dept of Math, Yale, New Haven, USA}
\email{michail.louvaris@yale.edu}
\author{Daniel T. Wise}
\address{Department of Mathematics, Weizmann Institute of Science, Rehovot, Israel
}
\email{daniel.wise@weizmann.ac.il}
\author{Gal Yehuda}
\address{Dept of Math, Yale, New Haven, USA}
\email{gal.yehuda@yale.edu}
\begin{document}
\vspace*{-1cm} % Adjust the value as needed

% \begin{com}
% {\bf \normalsize COMMENTS\\}
% ARE SHOWING!\\
% Turn off by uncommenting:
% ``renewcommand{showcomments}{no}''
% \end{com}

\begin{abstract}
We prove a concentration result for the leading eigenvalue of the non--backtracking matrix of the configuration model under the assumption of uniformly bounded degrees. Let $P$ denote the limiting degree distribution. Assuming  polynomial approximation, we show that as the number of vertices tends to infinity, the leading eigenvalue of the non--backtracking matrix concentrates around
\[
\frac{\mathbb{E}[P(P-1)]}{\mathbb{E}[P]}.
\]
This quantity corresponds to the mean offspring number of the excess--degree branching process associated with the local limit of the configuration model. As a byproduct of our work we explain how this result can be applied to prove the density of the growth rates of the subgroups of the free group.
\end{abstract}

\maketitle

\vspace{-.4cm}
\tableofcontents

%\section{Introduction}
%\input{introduction}

%\section{Subgroups, subtrees, and immersed graphs}\input{groups_to_graphs}

%\section{Proof of algebraic theorem from probabilistic}
%\section{Growth rates as eigenvalues of graphs}
\section{Introduction}
Spectral methods play a central role in the study of large random graphs and complex networks. Traditionally, much of the literature has focused on the eigenvalues of the adjacency matrix or graph Laplacian. However, in sparse graphs these operators often suffer from localization effects and degree heterogeneity, which can obscure the structural information encoded in the spectrum. In recent years, the \emph{non--backtracking matrix} (also known as the Hashimoto matrix) has emerged as a powerful alternative operator for analyzing sparse networks.

The non--backtracking matrix is defined on the set of directed edges of a graph and encodes transitions between edges that do not immediately reverse direction. Consequently, powers of this matrix count non--backtracking walks, and its spectral properties capture structural information about the graph while avoiding many of the degeneracies present in adjacency spectra. The operator was originally introduced by Hashimoto in connection with the Ihara zeta function of a graph, which relates non--backtracking walks to algebraic invariants of graphs \cite{hashimoto1989zeta,terras2011zeta}.

Beyond its algebraic origins, the non--backtracking operator has become an important tool in spectral graph theory and network science. In particular, its spectrum has proven useful for community detection in sparse networks, where classical spectral algorithms based on adjacency matrices fail see \cite{gulikers2016non}. A series of influential works showed that the leading eigenvectors of the non--backtracking matrix allow efficient detection of communities in stochastic block models down to the information--theoretic threshold, a phenomenon sometimes referred to as ``spectral redemption'' \cite{krzakala2013spectral,bordenave2015non}.

The largest eigenvalue of the non--backtracking matrix also has important interpretations for dynamical processes on networks. In epidemic and percolation models, the inverse of the leading eigenvalue provides an approximation to epidemic thresholds and bond percolation thresholds in networks \cite{karrer2014percolation}. Moreover, eigenvectors associated with the leading eigenvalue lead to non--backtracking centrality measures, which mitigate the localization effects that affect classical eigenvector centrality in heterogeneous networks.

From a theoretical perspective, understanding the behavior of the largest eigenvalue of the non-backtracking matrix is of independent interest. Substantial progress has been made in characterizing the spectrum of the non--backtracking matrix for random graphs. Bordenave, Lelarge and Massouli\'e \cite{bordenave2015non},see also the references therein.  Since many sparse random graph models are locally tree--like, the idea is that their spectral properties should be closely related to Galton--Watson branching processes describing the local exploration of the graph.
We also refer to the recent \cite{stephan2022non} and \cite{bordenave2023detection}. Furthermore the non-backtracking matrix has been used to establish bounds for the spectral radius of the adjacency matrix of random graphs, see for example \cite{benaych2020spectral} and \cite{dumitriu2022extreme}. 
Spectral analysis of the non-backtracking matrix was also used to prove Friedman’s second eigenvalue Theorem \cite{bordenave2015new}.  

The configuration model provides a natural setting in which to investigate these ideas. It generates random graphs with a prescribed degree distribution and serves as a canonical model for sparse networks with heterogeneous degrees \cite{bollobas2001random,van2016random}. In this setting, heuristic arguments based on branching process approximations suggest that the spectral radius of the non--backtracking matrix should concentrate around
\[
\rho(B) \approx \frac{\mathbb{E}[P(P-1)]}{\mathbb{E}[P]},
\]
where $P$ denotes the degree distribution. This quantity corresponds to the mean offspring number of the excess--degree branching process that arises when exploring the neighborhood of a random edge.

In this paper we establish a rigorous version of this heuristic. Under the assumption of uniformly bounded degrees and a polynomial approximation, we prove that the leading eigenvalue of the non--backtracking matrix of the configuration model concentrates around
\[
\frac{\mathbb{E}[P(P-1)]}{\mathbb{E}[P]}.
\]
 These results contribute to the understanding of spectral properties of sparse random graphs and provide theoretical support for the use of the non--backtracking spectrum in network inference and dynamical processes.  Our approach is heavily influenced by tools developed in \cite{bordenave2015non} and \cite{bordenave2015new}.  
\section{Original motivation from geometric group theory}

The original motivation for this work comes from geometric group theory: we originally wanted to prove that the spectrum of growth rates of subgroups of the free group $F_r$ is dense in the interval $(1,2r-1)$. This was later proven in \cite{coulon2026growth} using a more straightforward approach, and in a more general setting including hyperbolic and surface groups.

The free group $F_r$ can be realized as the fundamental group of the bouquet $B_r$ of $r$ loops, and its Cayley graph with respect to a free basis is a $(2r)$-regular tree. This tree exhibits exponential growth: the number of vertices at distance $R$ from a basepoint grows on the order of $(2r-1)^R$. Subgroups of $F_r$ correspond to covering spaces of $B_r$, and their geometry can be studied via the growth of the corresponding covering graphs. The aim of the present paper was to recover the density of possible growth rates using probabilistic methods, by constructing suitable finite graphs and analyzing their typical behavior; the probabilistic inputs will be developed later.

A key tool is the correspondence between subgroups and graph immersions, originating in the work of Stallings. Any finite graph $G$ equipped with an immersion into $B_r$ gives rise to an injective map on fundamental groups, and hence to a subgroup of $F_r$. Moreover, any finite graph whose degrees are bounded by $2r$ can be oriented and labelled so as to admit such an immersion. The universal cover $\widetilde G$ is then a tree which embeds into the Cayley graph of $F_r$, and the growth rate of the subgroup is equal to the growth rate of $\widetilde G$.

The growth rate $\omega(\widetilde G)$ measures how rapidly the number of vertices expands with distance: if $B_R$ denotes the set of vertices of $\widetilde G$ within distance $R$ from a root, then
\[
\omega(\widetilde G) = \lim_{R \to \infty} |B_R|^{1/R}.
\]
Equivalently, this describes the exponential growth of non-backtracking paths in $G$, reflecting the branching structure of the tree $\widetilde G$. This viewpoint is encoded by the non-backtracking matrix $B_G$, whose entries record allowed transitions between directed edges without immediate reversal. A fundamental fact is that the leading eigenvalue satisfies $\lambda_1(B_G) = \omega(\widetilde G)$, so that controlling growth reduces to controlling the spectral radius of $B_G$.

Given Theorem~\ref{thm:probabilistic_claim}, one has the following corollary: let $r \geq 2$. For every $\omega_0 \in (1,2r-1)$ and every $\epsilon > 0$, there exists a subgroup $H \leq F_r$ such that $|\omega(H) - \omega_0| \leq \epsilon$. The proof proceeds by constructing finite graphs whose non-backtracking spectral radius approximates $\omega_0$ using random graphs with prescribed degree distributions (as developed in Theorem~\ref{thm:probabilistic_claim}), and then realizing these graphs as subgroups via immersions into $B_r$.

One may find more details about this method in \cite{coulon2026growth}. Moreover, in \cite{louvaris2025subgroups}, given the density of the subgroups, an explicit graph with every growth rate is constructed.
\section{Model and main result}
In this section we present our main result and give a description of how it is proven.

Let $G = (V, E)$ be a finite graph, and let 
 \[\{A\}_{u,v \in V}=\mathbf{1}\left( \{u,v\} \in E \right)\]
be its adjacency matrix. 
Denote by $\overrightarrow{E}=\{(u,v):\{u,v\}\in E\}$ the set of oriented edges of $G$.

\begin{defn}[Non-backtracking matrix]\label{defn_of_non-backtracking}
    The \emph{non-backtracking matrix} of $G$ is the $|\overrightarrow{E}|\times|\overrightarrow{E}|$ matrix  $B_G$  with entries:
    \[\{B_G\}_{\overrightarrow{e},\overrightarrow{f}}=\mathbf{1}\{\overrightarrow{e}=(u,v) \text{ and } \overrightarrow{f}=(z,w) \text{ such that } v=z \text{ and } u\neq w \}. \]
\end{defn}

By the Perron-Frobenious Theorem, the leading eigenvalue $\lambda_1(B_G)$ of $B_G$ is a non-negative real number.

We also introduce the notion of \textit{high probability events}.
\begin{notation}
 In what follows, given a probability space $(\Omega,\mathcal{A},\mathbb{P})$ and a sequence of events $A_n$, we say $A_n$ \emph{holds with high probability} if  
 \[\lim_{n \to \infty}\mathbb{P}(A_n)=1.\]
 \end{notation}
\subsection{The model}
Next we formally define the model.
For further details, see \cite[sec~1.4]{bordenave2016lecture}.

Let $2 \leq k_{\min} < k_{\max}$ be two positive integers, and let $\mathbf{d}=\{k_{\min},\ldots, k_{\max}\}$ be the set of possible degrees in the graph.
For each $n \in \N$, let $\mathbf{d}_{n}=\{n_i\}_{i \in \mathbf{d}}$ be a set with:
\begin{itemize}
    \item $\sum_{i \in \mathbf{d}}n_i=n$.
    \item $\sum_{i \in \mathbf{d}}in_i$ is even.
\end{itemize}

\begin{defn}[Random Graphs with a given degree sequence]\label{defn:random_graphs_model}
Let $n \in \N$ and $d_n = \{n_i\}_{i \in \mathbf{d}}$ be as above. 
Define $G_n(d_n)$ to be the uniform probability distribution over graphs with $n$ vertices, and with degrees $d_n$. 
\end{defn}

In many cases it is easier to work with the more constructive model of random \emph{multigraphs} with a given degree sequence.

\begin{defn}[Configuration model]\label{defn:configuration_model}
Let $n \in \N$ and $d_n = \{n_i\}_{i \in \mathbf{d}}$ be as above. 
Define $\Tilde{G}_n(d_n)$ to be the uniform probability distribution over multigraphs with $n$ vertices, and with degrees $d_n$. 
\end{defn}

The configuration model can be described algorithmically as follows. 
For a vertex $u$, let $d_u=\text{degree}(u)$, let $\Delta^{(n)}_u= \{(u,j), 1\leq j \leq d_u\} $ and let $\Delta^{(n)}=\cup_u \Delta^{(n)}_u$. 
The set $\Delta^{(n)}$ is the \emph{set of half-edges} of $V(\Tilde{G}_n)$. 
Then the set of multigraphs  with $n$ vertices and with degrees $d_n$ is exactly the set of matchings of the set $\Delta^{(n)}$, i.e.\ permutations of $\Delta^{(n)}$ that are their 
 own inverse and with no fixed points.  
Let $M(\Delta^{(n)})$ be the set of matchings of the $\Delta^{(n)}$. 
Given $\sigma \in M(\Delta^{(n)})$, two vertices are adjacent if two half-edges of those vertices are matched via $\sigma$. 

Instead of picking a multigraph uniformly at random, we can equivalently pick a matching in $M(\Delta^{(n)})$ uniformly at random. Lastly we have:
\begin{align*}
    |M(\Delta^{(n)})|
     \ = \ (|\Delta^{(n)}|-1)(|\Delta^{(n)}|-3)\cdots 1
     \ = \ (|\Delta^{(n)}|-1){!}{!}
\end{align*}
where $k!!$ denotes $(k)(k-2)(k-4)\cdots$.

Denote by $\mathbf{D} = \{d_n\}_{n \in \N}$ a sequence of degree sequences, by $\mathbf{G}_n = \{G_n(d_n)\}_{n \in \N}$ a sequence of random graphs, where for each $n$ we draw $G_n$ from the uniform distribution over graphs with degrees $d_n$, and by $\Tilde{\mathbf{G}}_n = \{\Tilde{G}_n(d_n)\}$ a sequence of random multigraphs, drawn from the configuration model.

\begin{obs}
For the sequence $d_n$ to be graphic, it is necessary that $\sum_{i\in \mathbf{d}} i n_i$ be even.
However this condition is not sufficient. 
See Theorem~\ref{thm:graphic,from multi to simple}. 
\end{obs}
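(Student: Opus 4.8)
The plan is to handle the two assertions of the observation separately, since they are of quite different character.

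\emph{Necessity} is just the handshake lemma. If $H$ is any graph (indeed any multigraph) realizing the degree sequence $d_n$, then summing $\deg_H(v)$ over all vertices $v$ counts each edge of $H$ exactly twice, so $\sum_{i\in\mathbf{d}} i\,n_i = \sum_{v}\deg_H(v) = 2\,|E(H)|$ is even. Nothing beyond quoting this identity is required; it is the same elementary computation used earlier in the paper when raising degrees to make a graph regular.

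\emph{Insufficiency} requires exhibiting one degree sequence whose degree sum is even but which no \emph{simple} graph realizes, and checking it is compatible with the model's standing hypotheses ($k_{\min}\ge 2$ and all degrees at most $2r$). First I would use the trivial bound that a vertex of a simple graph on $n$ vertices has degree at most $n-1$, and take $n$ small relative to the largest prescribed degree: for $r\ge 2$, the sequence on $n=3$ vertices with one vertex of degree $4$ and two of degree $2$ has degree sum $8$, yet no simple graph on three vertices has a vertex of degree $4$. If one prefers every value in $\{k_{\min},\dots,k_{\max}\}$ to actually occur, a longer padded variant (or any sequence satisfying the parity condition while violating one of the Erd\H{o}s--Gallai inequalities) works just as well; alternatively one simply invokes Theorem~\ref{thm:graphic,from multi to simple}, whose exact characterization of graphic sequences makes the gap between ``even degree sum'' and ``graphic'' transparent.

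I do not anticipate a genuine obstacle: both directions are classical. The only point deserving a moment's care is that the counterexample should respect the assumptions imposed on $\mathbf{d}$ throughout the paper, which is why I would pad it with degree-$2$ vertices rather than using the two-vertex sequence $(2,2)$.
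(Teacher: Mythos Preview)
Your argument is correct, and indeed more detailed than the paper's own treatment: the paper states this observation without proof, simply pointing to Theorem~\ref{thm:graphic,from multi to simple}. Your handshake-lemma argument for necessity and your explicit counterexample $(4,2,2)$ on three vertices for insufficiency are both valid and entirely standard.

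One small correction, though: you describe Theorem~\ref{thm:graphic,from multi to simple} as giving an ``exact characterization of graphic sequences'' that would make the gap between ``even degree sum'' and ``graphic'' transparent. It does not. That theorem is not Erd\H{o}s--Gallai; it is the statement that in the configuration model $\liminf_n \mathbb{P}(\Tilde{\mathbf{G}}_n \text{ is simple})>0$, from which the paper deduces that \emph{under its standing assumptions} the sequence $d_n$ is graphic for all sufficiently large $n$. The paper's pointer to it is not meant to justify the insufficiency claim, but rather to reassure the reader that the insufficiency causes no trouble in the paper's setting. So you should drop that alternative justification and rely solely on your explicit counterexample (or cite Erd\H{o}s--Gallai directly if you want a characterization).
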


Let $P$ be a probability distribution supported on $\mathbf{d}$, so that for all sufficiently large $n$ and for some $d\in (0,1]$: 
\begin{align}\label{rate of convergence assumption simple graph}
    \left| \frac{n_i}{n}-P(i) \right| \ \leq \ \frac{C}{n^d}, \text{ for all }  k_{\min}\leq i \ \leq \ k_{\max}.
\end{align}
\begin{obs}
Given a distribution $P$ as above,  we can construct a 
sequence $\mathbf{D}$ by observing that for $n\in \N$:
    \[n-(k_{\max}-k_{\min}) \ \leq \ \sum_{i \in \mathbf{d}} \lfloor P(i)n\rfloor \ \leq \ n. \]
    So for all $n$ and appropriate non-negative integers $\{x^{(n)}_{i}\}_{i \in \mathbf{d}}$ uniformly bounded by $k_{\max}$, the sequence $\mathbf{D}=\left\{\{\lfloor P(i)n\rfloor +x^{(n)}_i\}_{i\in\mathbf{d}}\right\}_{n \in \N}$ satisfies the assumptions of Definition~\ref{defn:random_graphs_model}. 
\end{obs}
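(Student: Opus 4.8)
The plan is to verify the two claimed inequalities and then the claim that the resulting degree sequence $\mathbf{D}$ meets the requirements of Definition~\ref{defn:random_graphs_model} (namely $\sum_i n_i = n$ and $\sum_i i\, n_i$ even), together with the rate bound \eqref{rate of convergence assumption simple graph}. First I would observe that since $P$ is a probability distribution supported on $\mathbf{d}$, we have $\sum_{i\in\mathbf{d}} P(i) n = n$. Applying the elementary bound $x-1 < \lfloor x\rfloor \le x$ to each of the $|\mathbf{d}| = k_{\max}-k_{\min}+1$ summands and summing gives
\[
n - (k_{\max}-k_{\min}+1) \;<\; \sum_{i\in\mathbf{d}} \lfloor P(i) n\rfloor \;\le\; n.
\]
(The excerpt writes $k_{\max}-k_{\min}$ as the slack; this is the same statement up to the off-by-one coming from whether one counts $|\mathbf{d}|$ or $|\mathbf{d}|-1$ terms, and either bound is fine for the argument.) Hence the deficit $n - \sum_{i\in\mathbf{d}}\lfloor P(i)n\rfloor$ is a non-negative integer of size at most $|\mathbf{d}|$, and we may distribute it among the $i\in\mathbf{d}$ as correction terms $x^{(n)}_i \ge 0$, each at most $|\mathbf{d}| \le k_{\max}$, so that $\sum_{i\in\mathbf{d}}(\lfloor P(i)n\rfloor + x^{(n)}_i) = n$. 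This secures the first defining condition.

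Next I would handle the parity condition: we need $\sum_{i\in\mathbf{d}} i\,(\lfloor P(i)n\rfloor + x^{(n)}_i)$ to be even. Since we have freedom in how we distribute the deficit (and, if the deficit is already exhausted, we can still shift one unit between two degree classes at the cost of making one $x^{(n)}_i$ slightly larger, still $O(1)$), we can always adjust the parity: moving one unit of mass from degree class $i$ to degree class $j$ changes the edge-half-count by $j - i$, and choosing $i,j$ of opposite parity (which is possible as soon as $|\mathbf{d}| \ge 2$, i.e.\ $k_{\min} < k_{\max}$, which is our standing assumption) flips the parity. So a bounded further adjustment of the $x^{(n)}_i$ makes $\sum_i i n_i$ even while keeping each $x^{(n)}_i = O(1)$, uniformly bounded by $k_{\max}$ (possibly after enlarging the bound to $k_{\max}+1$, which is still a harmless absolute constant). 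Finally, the rate bound \eqref{rate of convergence assumption simple graph}: writing $n_i = \lfloor P(i)n\rfloor + x^{(n)}_i$, we get $|n_i - P(i)n| \le 1 + x^{(n)}_i \le 1 + k_{\max}$, hence $|n_i/n - P(i)| \le (1+k_{\max})/n = C/n$ with $C = 1 + k_{\max}$, as required.

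There is essentially no hard step here — this is a routine consequence of the fact that rounding each of boundedly many terms introduces only $O(1)$ total error, plus the flexibility of a two-element support to fix parity. The only point requiring the tiniest bit of care is making sure the parity correction and the summation-to-$n$ correction are compatible and that the corrections stay uniformly bounded; one clean way to present this is to first choose \emph{any} non-negative $x^{(n)}_i$ summing to the deficit, check the parity of $\sum_i i n_i$, and if it is odd, perform one additional unit transfer between a fixed pair of opposite-parity degrees, all of which changes each $x^{(n)}_i$ by at most $1$. I would state the constant explicitly as $C = k_{\max}+2$ to absorb all these $O(1)$ slacks at once, and remark that any growth rate $o(n)$ in the error would work equally well, consistent with the preceding Observation.
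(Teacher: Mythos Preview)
Your proposal is correct and in fact more detailed than what the paper provides: the paper treats this Observation as self-evident and gives no separate proof beyond the displayed inequality and the sentence following it. Your verification of the floor-sum bound, the distribution of the deficit, the parity adjustment, and the rate condition \eqref{rate of convergence assumption simple graph} are all sound; your remark that the \emph{strict} inequality $\sum_i \lfloor P(i)n\rfloor > n - (k_{\max}-k_{\min}+1)$ on integers upgrades to $\ge n - (k_{\max}-k_{\min})$ also explains why the paper's stated bound is correct as written rather than off by one.

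One small caveat on the parity step: when the deficit is zero (i.e.\ every $P(i)n$ is already an integer), ``shifting one unit between two degree classes'' forces some $x^{(n)}_i$ to become $-1$, contradicting non-negativity; your sentence about ``making one $x^{(n)}_i$ slightly larger'' does not cover this. Strictly speaking the paper's phrase ``for all $n$'' has the same issue. It is harmless for the application --- one only needs the construction along a subsequence of $n$'s, or one may simply relax ``non-negative'' to ``bounded in absolute value by a constant'', which preserves the $C/n$ rate --- but it is worth flagging rather than absorbing into the $O(1)$.
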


\subsection{Main result: Largest eigenvalue of the Non-backtracking matrix}
For the leading eigenvalue of the nonbacktracking matrix of a random graph with a given degree sequence we have the following 

\begin{theorem}\label{thm:probabilistic_claim}
     Let $\mathbf{G}_n = \mathbf{G}_n(d_n)$ be a sequence of random graphs with a given degree sequence as in Definition~\ref{defn:random_graphs_model}, and let $P$ be a probability distribution as in \eqref{rate of convergence assumption simple graph}.
     
     Define
     \[\alpha  \ := \ \frac{\E[P(P-1)]}{\E [P]}.\]
     Then there exists  $c>0$, such that  with high probability 
     \[|\lambda_1 ( B_{\mathbf{G}_n})-\alpha| \ \leq \  \frac{1}{c \log n}.\]
\end{theorem}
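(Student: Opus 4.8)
The plan is to follow the moment-method strategy of Bordenave--Lelarge--Massoulié \cite{bordenave2015non}, adapted to the configuration model, in three stages: transfer to the multigraph model, compute the dominant contribution via a local tree analysis, and bound the error terms using the path-counting machinery of \cite{bordenave2015new}. First I would reduce the statement for $\mathbf{G}_n$ to the corresponding statement for the configuration model $\tilde{\mathbf{G}}_n$. By Theorem~\ref{thm:graphic,from multi to simple} (the multigraph-to-simple-graph comparison), the event that $\tilde{\mathbf{G}}_n$ is simple has probability bounded away from $0$ (this uses that the degrees are bounded and $k_{\min}\ge 2$, so $\sum i n_i$ and $\sum i(i-1)n_i$ are both $\Theta(n)$); hence any event holding with high probability in $\tilde{\mathbf{G}}_n$ also holds with high probability in $\mathbf{G}_n$. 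So it suffices to prove the eigenvalue concentration for the configuration model, where we may work with a uniformly random matching $\sigma\in M(\Delta^{(n)})$.

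Next I would set up the moment computation. Fix $\ell = \lfloor \kappa \log n\rfloor$ for a small constant $\kappa>0$, let $\chi$ be the all-ones vector on $\overrightarrow{E}$, and study $y := B^{\ell}(B^{*})^{\ell}\chi$ and $z := B^{\ell}\chi$ as in \cite[Prop~7]{bordenave2015non}. The entries of $z$ count non-backtracking walks of length $\ell$ emanating from a given oriented edge; in the configuration model, a length-$O(\log n)$ neighborhood of a typical oriented edge is, with high probability, a tree, and on that tree a non-backtracking walk is exactly a descent in the unimodular Galton--Watson tree whose offspring distribution is the size-biased degree distribution minus one. The expected number of children is precisely $\E_P[(P-1)] \cdot (\text{size-bias normalization}) = \E_P[P(P-1)]/\E_P[P] = \alpha$, which is where the constant $\alpha$ in the statement comes from; a second-moment / concentration argument on the tree (Galton--Watson with bounded offspring) shows $\log \langle z,\chi\rangle / \ell \to \log\alpha$ and similarly for $y$, with fluctuations of order $1/\ell = O(1/\log n)$, which accounts for the $1/(c\log n)$ error. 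Combining the upper bound $\lambda_1(B_G)^{\ell} \lesssim \langle y,\chi\rangle$ with a matching lower bound from $\langle z,\chi\rangle$ (again via \cite[Prop~7]{bordenave2015non}, which packages exactly this) yields $|\lambda_1(B_{\tilde{\mathbf{G}}_n}) - \alpha| \le 1/(c\log n)$ on the good event.

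The main obstacle — and the step I expect to occupy most of the work — is controlling the contribution of the \emph{non-tree-like} parts of the neighborhoods, i.e.\ bounding the ``remaining eigenvalues'' of $B_{\tilde{\mathbf{G}}_n}$ so that the $\ell$-th power is genuinely dominated by the Perron contribution. Here one cannot simply discard cycles: short cycles appear at distance $O(\log n)$ with non-negligible probability and their walks could in principle swamp the tree count. The fix, following the proof of \cite[Thm~3]{bordenave2015new}, is a careful enumeration of closed non-backtracking walks (``tangles'') that revisit vertices, weighting each by the number of matchings of $\Delta^{(n)}$ compatible with it; because the degrees are bounded and $\ell = O(\log n)$, one shows the number of such walks with a given excess (number of extra edges beyond a tree) is subexponential in the excess, so their total weight is $n^{o(1)}$ times smaller than the main term. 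I would implement this via the same decomposition of walks into a tree part plus a bounded number of ``tangle'' edges, estimate the matching-count ratio for each configuration, and sum. Once this path-counting bound is in place, Markov's inequality on the $\ell$-th moment converts it into the needed high-probability statement, and assembling the pieces completes the proof.
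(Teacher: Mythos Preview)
Your proposal is correct and follows essentially the same route as the paper: reduce to the configuration model via Theorem~\ref{thm:graphic,from multi to simple}, establish the dominant contribution by coupling $\ell$-neighborhoods to the unimodular Galton--Watson tree (Section~\ref{sectionlocal}), bound the orthogonal complement via the centered matrix expansion and path-counting of \cite{bordenave2015new} (Section~\ref{sectionmatriexpansion}), and assemble everything through \cite[Prop~7]{bordenave2015non}.

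One technical point to adjust: you work with the all-ones vector $\chi$, but the paper uses $\tilde{\psi}$, the vector with $\tilde{\psi}_e = \deg(u)-1$ for $e=(u,i)$. This is not cosmetic. When degrees vary, the rank-one term arising from the telescoping identity \eqref{matrix_identity_for_B^(l)} is $\frac{1}{N}\sum_k \underline{B}^{(k-1)}\chi\,\tilde{\psi}^* B^{(\ell-k)}$, because summing over the middle step of a non-backtracking walk through half-edge $b$ contributes a factor $\deg(b)-1$, not $1$. Hence to kill this term you must take $x$ orthogonal to $(B^*)^{\ell}\tilde{\psi}$, not to $(B^*)^{\ell}\chi$, and the near-eigenvector in Propositions~\ref{prop:asymptotics_for_ probabilistic}~and~\ref{prop:second_eigenvalue} is built from $\tilde{\psi}$ accordingly. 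In the $d$-regular case of \cite{bordenave2015new} the two vectors coincide, which is why this subtlety is invisible there; in your implementation you would discover it as soon as you wrote out the matrix decomposition, but it is worth anticipating.
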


% Theorem~\ref{thm:dense_spectal_radius_non-back} is a corollary of Theorem~\ref{thm:probabilistic_claim}, and is proven immediately after the statement of Theorem~\ref{thm:probabilistic_claim}. 
% In particular what we will prove is that most of the graphs with specified degrees will satisfy \eqref{|lambda-a|}.

% \begin{obs}\label{obs_for__gen_non_back_}
%     For an edge $\overrightarrow{e} = (u,v) \in \overrightarrow{E}$  denote $(\overrightarrow{e})^{-1}=(v,u)$. Then the non-backtracking matrix $B_{G}$, has the following property:
%     \[ \{ B^{*}_{G}\}_{\overrightarrow{e},\overrightarrow{f}}= \{B_G\}_{\overrightarrow{f}^{-1},\overrightarrow{e}^{-1}}\]
% \end{obs}

% \begin{proof}[Proof of Theorem~\ref{thm:free_growth_rate_dense} give Theorem~\ref{thm:free_growth_rate_dense}] \begin{com} TBD \end{com}
% non-backtracking matrix eigenvalue connects to growth rate.
% \end{proof}

\subsection{Proof of Theorem~\ref{thm:probabilistic_claim}}

The random graphs $\mathbf{G}_n$ have been extensively studied and, under mild assumptions on the degrees (which our model satisfies), their local limiting behavior is well understood; see for example \cite{bordenave2016lecture}. 
A convenient way to analyze this random graph model is to consider its multigraph analogue with the same degree sequence but allowing loops and multiple edges. One then conditions on the event that the resulting multigraph is simple; see Theorem~\ref{thm:graphic,from multi to simple}. 
Working with multigraphs is advantageous because they can be interpreted as matchings of a set determined by the degree sequence (see Definition~\ref{defn:configuration_model}). 
The model obtained by choosing such multigraphs uniformly at random is the \emph{configuration model}, and we denote its distribution by $\mathbf{\Tilde{G}_n}$.

To determine the asymptotic behavior of $\lambda_1(B_{\mathbf{G}_n})$, we follow the strategy developed in \cite{bordenave2015non}, which analyzes the leading eigenvalue of the non--backtracking matrix in the Erd\H{o}s--R\'enyi model and its generalization, the stochastic block model. 
In order to characterize the largest eigenvalue of the non--backtracking matrix, it suffices to understand high-order moments of the operator; see \cite[Prop.~7]{bordenave2015non}, which yields the bound in Equation~\eqref{|lambda-a|}.

Specifically, if $\chi$ is the vector with all entries equal to $1$ and $\ell = O(\log n)$, it suffices to analyze the vectors
\[
B^\ell_{\mathbf{G}_n}(B^*_{\mathbf{G}_n})^\ell \chi
\quad\text{and}\quad
B^\ell_{\mathbf{G}_n}\chi .
\]
Most entries of these vectors correspond to non--backtracking paths on a tree. 
The mean number of children of a vertex in this tree can be computed explicitly from the asymptotic behavior of the sequence $\{n_i\}$ and yields the parameter $\alpha$ appearing in Equation~\eqref{|lambda-a|}.

To apply the techniques of \cite{bordenave2015non}, we also require a bound on the remaining eigenvalues of $B_{\mathbf{G}_n}$. 
For this purpose we use techniques from \cite{bordenave2015new}, in particular from the proof of \cite[Thm.~3]{bordenave2015new}.

The proof relies on two main ingredients. 
The first is Proposition~\ref{prop:asymptotics_for_ probabilistic}, which is obtained using path-counting combinatorics. 
The second is Proposition~\ref{prop:second_eigenvalue}, which relies on a structural theorem for the non--backtracking matrix.

In what follows we write $B_n$ instead of $B_{\mathbf{G}_n}$. 
Fix $\eta>0$ with $k_{\max}^{\eta}\le a$. 
Let $\chi$ be the $|\overrightarrow{E}|$-vector with all entries equal to $1$, and let $\Tilde{\psi}$ be the $|\overrightarrow{E}|$-dimensional vector defined by
\[
\Tilde{\psi}_e = \deg(u)-1
\]
for any directed edge $e=(u,v)\in \Delta^{(n)}$.

\begin{prop}\label{prop:asymptotics_for_ probabilistic}
Let $\ell=\delta_0\log_a n$ with $\delta_0<\frac{\eta d}{16}$. Then the following hold with high probability:
\begin{align*}
c_0 a^\ell \le 
\frac{\|B_n^\ell(B_n^*)^\ell\Tilde{\psi}\|}{\|(B_n^*)^\ell\Tilde{\psi}\|}
\le c_1 a^\ell
\end{align*}
for some constants $c_1>c_0>0$, and
\begin{align*}
\frac{\left|\langle B_n^\ell(B_n^*)^\ell \Tilde{\psi}, (B_n^*)^\ell\Tilde{\psi}\rangle\right|}
{\|B_n^\ell(B_n^*)^\ell \Tilde{\psi}\|\|(B_n^*)^\ell\Tilde{\psi}\|}
\ge c_0 .
\end{align*}
\end{prop}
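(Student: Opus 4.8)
The plan is to establish Proposition~\ref{prop:asymptotics_for_ probabilistic} via the method of moments on non-backtracking paths in the configuration model, closely following the path-counting scheme of \cite{bordenave2015non} (their Section~3 and Proposition~7), but adapted to the given degree sequence. The key structural fact is that for $\ell = \delta_0\log_a n$ with $\delta_0$ small, the $\ell$-neighborhood of a uniformly chosen half-edge in $\Tilde{\mathbf G}_n$ is, with high probability, a tree (no short cycles), and on this tree $B_n$ acts exactly as the transfer operator of the unimodular Galton--Watson tree whose offspring distribution is the size-biased version of $P$ shifted by $-1$; this is where the quantity $a = \E_P[P-1]/\E_P[P]$ enters as the Perron eigenvalue/mean offspring. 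So my first step is to pass from $\mathbf G_n$ to $\Tilde{\mathbf G}_n$ using Theorem~\ref{from multi to graphs} (the multigraph-to-graph conditioning), since any high-probability statement in the configuration model that survives conditioning on simplicity transfers to $\mathbf G_n$.

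The second step is the first moment computation: I would compute $\E\langle B^\ell_n (B^*_n)^\ell \Tilde\psi, (B^*_n)^\ell \Tilde\psi\rangle$ and $\E\|(B^*_n)^\ell \Tilde\psi\|^2$ by expanding into sums over pairs of non-backtracking paths of length $\ell$ emanating from a common edge, using that $\Tilde\psi_e = \deg(u)-1$ is precisely the local branching weight. The dominant contribution comes from pairs of vertex-disjoint (tree-like) paths; the choice of $\Tilde\psi$ makes the weighted count of such paths of length $\ell$ from a fixed starting edge concentrate around $a^\ell$ times a bounded constant (the $\Tilde\psi$-weighting is exactly the left Perron vector of the branching operator, which is what makes the ratio behave like $a^\ell$ rather than decaying). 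This yields the two-sided bound $c_0 a^\ell \le \|B^\ell_n(B^*_n)^\ell\Tilde\psi\|/\|(B^*_n)^\ell\Tilde\psi\| \le c_1 a^\ell$ in expectation, and the inner-product lower bound follows because on the tree-like part the vectors $B^\ell_n(B^*_n)^\ell\Tilde\psi$ and $(B^*_n)^\ell\Tilde\psi$ are positively correlated (both are nonnegative and supported on overlapping neighborhoods), giving a constant-order cosine. I would then do a second-moment (variance) estimate to upgrade expectation statements to high-probability statements; the variance is controlled by bounding the contribution of path-pairs that share vertices, which is where the tangle/excess-edge bookkeeping of \cite{bordenave2015non} is needed — only $O(\log n)$-sized excursions can create cycles, and their total weighted contribution is $n^{-\Omega(1)}$ smaller than the main term provided $\delta_0 < \eta/16$.

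The main obstacle, as in \cite{bordenave2015non}, is controlling the \emph{non-tree-like} terms: path-pairs that traverse one or more cycles or self-intersect. One must show that the weighted count of such "defective" paths of length $2\ell$ is negligible compared to $a^{2\ell}$. This requires (i) a uniform bound $a \ge k_{\max}^\eta$ and the constraint on $\delta_0$ so that $a^\ell = n^{\delta_0}$ is a genuine polynomial power of $n$, and (ii) a careful enumeration: a path with $t$ "extra" edges (cycle-closures) loses a factor $n^{-t}$ from the matching probabilities while gaining at most a factor $(\ell k_{\max})^{O(t)}$ from combinatorial choices, and since $\ell = O(\log n)$ this is $n^{-t + o(1)}$, hence summable over $t \ge 1$. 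The weighting by $\Tilde\psi$ complicates the bookkeeping slightly since weights can be as large as $k_{\max}-1$ per vertex, but since $k_{\max}$ is a fixed constant this only contributes bounded multiplicative factors per step and does not affect the polynomial-in-$n$ accounting. I expect this defect-term estimate, together with verifying that the error in replacing $n_i/n$ by $P(i)$ (controlled by \eqref{rate of convergence assumption simple graph} at rate $C/n$) only perturbs $a$ by $O(1/n)$ and hence $a^\ell$ by a $1+o(1)$ factor, to be the bulk of the technical work; the rest is the standard translation from moment control to eigenvalue localization carried out in the cited reference and invoked to conclude Theorem~\ref{thm:probabilistic_claim} together with Proposition~\ref{prop:second_eigenvalue}.
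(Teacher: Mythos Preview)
Your outline is broadly sound and would likely succeed, but the paper's route is different in two concrete respects that are worth noting.

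First, for concentration you propose a second-moment computation, bounding the variance by enumerating path-pairs that share vertices. The paper instead uses the bounded-difference (switching) inequality for functionals of a uniform matching, i.e.\ the concentration result~\eqref{concentration Theorem} in Theorem~\ref{theorem_properties_conf_model}: one checks that the functionals $\sum_{\overrightarrow e}(S_\ell(\overrightarrow e))^2/a^{2\ell}$, $\sum_{\overrightarrow e}S_{2\ell}(\overrightarrow e)S_\ell(\overrightarrow e)/a^{3\ell}$ and $\sum_{\overrightarrow e}P_\ell^2(\overrightarrow e)/a^{4\ell}$ change by at most $k_{\max}^{O(\ell)}/a^{O(\ell)}\le n^{1/2-c\delta_0}$ under a single edge-switch, and then concentration around the mean is immediate. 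This sidesteps the path-pair variance bookkeeping entirely and is specific to the configuration model. The mean itself is identified by coupling the exploration process to the unimodular Galton--Watson tree in total variation (Corollaries~\ref{cor tv for edge} and~\ref{cor total variation for vertex}) and invoking the $L^2$ martingale limits for $Z_t/a^t$ and $Q_\ell/a^{2\ell}$ (Lemma~\ref{lemma limit of Zt}, Proposition~\ref{thm 25}).

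Second, the detailed ``defective path / excess-edge'' enumeration you describe---where a path with $t$ extra cycle-closures loses $n^{-t}$ and gains only $(\ell k_{\max})^{O(t)}$---is in this paper the engine behind the \emph{other} proposition, Proposition~\ref{prop:second_eigenvalue} (Section~\ref{sectionmatriexpansion}, via the trace method following \cite{bordenave2015new}). For Proposition~\ref{prop:asymptotics_for_ probabilistic} the paper does something cruder and cheaper: it simply shows that the set $\mathcal E_{2\ell+1}$ of directed edges whose $(2\ell{+}1)$-neighborhood is not a tree has size at most $n^{1/4}\log n$ w.h.p.\ (Observation~\ref{observation number of edges}), uses the exact identities $(B_n^*)^t\Tilde\psi=\mathbf T_t$, $T_\ell(\overrightarrow e)=S_{\ell+1}(\overrightarrow e)$ and $B_n^\ell(B_n^*)^\ell\Tilde\psi(\overrightarrow e)=P_\ell(\overrightarrow e)+(\deg v-1)S_\ell(\overrightarrow e)$ on the tree-like edges, and bounds the contribution of the bad edges by their cardinality times $k_{\max}^{O(\ell)}$. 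Your route would work, but the paper's is shorter here because the heavy combinatorics is deferred to where it is unavoidable.
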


Proposition~\ref{prop:asymptotics_for_ probabilistic} is proved in Proposition~\ref{concusion_for_simple}.

\begin{prop}\label{prop:second_eigenvalue}
Let $\ell=\delta_0\log_a n$ with $\delta_0<\frac{\eta d}{16}$. 
There exists $c>0$ such that with high probability
\[
\sup_{\|x\|=1:\,\langle (B^*_{\mathbf{G}_n})^*\Tilde{\psi},x\rangle=0}
\|(B^*_{\mathbf{G}_n})^\ell x\|
\le a^{\ell/2}\log^c n .
\]
\end{prop}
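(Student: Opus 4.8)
The plan is to follow the strategy in the proof of \cite[Thm~3]{bordenave2015new}, adapting it to the configuration model with a bounded degree sequence. The starting point is the structure theorem for powers of the non-backtracking matrix: one has an identity of the form $B^{\ell} = \sum_{k=0}^{\ell} \Delta_k^{(\ell)}$, where $\Delta_0^{(\ell)}$ is (after passing to the locally tree-like picture) the contribution of genuinely non-backtracking tree paths of length $\ell$, and the remaining terms $\Delta_k^{(\ell)}$ for $k\geq 1$ collect the ``tangle'' corrections coming from the rare edges of $\mathbf{G}_n$ that close a cycle within distance $O(\ell)$ of a given oriented edge. Because $\ell = \delta_0\log_a n$ with $\delta_0$ small, with high probability every ball of radius $O(\ell)$ in $\mathbf{G}_n$ contains at most one cycle; this is the standard local-tree-likeness of the configuration model with bounded degrees, and it is exactly what makes the decomposition finite and controllable.

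The main step is to bound the operator norm of $(B^*_n)^{\ell}$ restricted to the subspace orthogonal to $(B_n^*)^*\tilde\psi$. The idea is: on a tree, $(B^*)^{\ell}$ acting on a vector orthogonal to the ``local Perron direction'' $\tilde\psi$ (whose coordinate at $e=(u,v)$ is $\deg(u)-1$, the Perron eigenvector of the tree branching operator) contracts — the tree part $\Delta_0^{(\ell)}$ restricted to this orthogonal complement has norm $O(a^{\ell/2}\mathrm{polylog}\,n)$, by a direct second-moment / trace computation on tree paths together with the observation that the $a^{\ell}$-order term is killed by the orthogonality constraint. The tangle terms $\Delta_k^{(\ell)}$, $k\geq 1$, are handled by a crude bound: each such term has norm at most $a^{\ell/2}\mathrm{polylog}\,n$ because forcing the path to traverse a cycle costs at least a factor $a^{-\ell/2}$ in the number of admissible paths per unit length, and there are only polylogarithmically many ways to place the cycle given local-tree-likeness. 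Summing the $O(\ell) = O(\log n)$ pieces only loses another logarithmic factor, which gets absorbed into $\log^c n$. Here the norm estimates are obtained via the usual route: bound $\|M\|$ by $\|M M^*\|$, expand $\mathrm{Tr}((MM^*)^m)$ for a suitable $m = \Theta(\log n / \log\log n)$ as a sum over closed paths, and count paths using the degree bounds and the cycle structure — this is where the bulk of the technical work lies, and it is morally identical to \cite{bordenave2015new} once the Erd\H{o}s–R\'enyi adjacency-matrix entries are replaced by the matching indicators of the configuration model.

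The hard part will be transporting the path-counting estimates of \cite{bordenave2015new} from the i.i.d.\ (or independent-entry) setting to the configuration model, where the half-edge matching introduces negative dependence among edges. The mitigation is standard: one uses the switching / sequential-exposure method to show that the relevant moment generating quantities for matchings are dominated by their independent-heuristic counterparts up to $1+o(1)$ factors, exactly as in the local-limit analysis of \cite{bordenave2016lecture}; the bounded-degree hypothesis $i\in\{k_{\min},\dots,k_{\max}\}$ keeps all these corrections uniformly controlled. A secondary technical point is checking that $\tilde\psi$ is the correct ``forbidden'' direction — i.e.\ that $(B_n^*)^*\tilde\psi$ is, up to a vector of small norm, an approximate eigenvector of $B_n^*$ for the eigenvalue $a$ — which follows because $\tilde\psi$ is the exact Perron eigenvector of the branching operator on the Galton–Watson tree that is the local limit, and the discrepancy is supported on the $o(n)$ oriented edges lying near a cycle. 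Once these two points are in place, assembling the pieces and choosing $\delta_0 < \eta/16$ to make all error terms subpolynomial yields the stated bound $a^{\ell/2}\log^c n$.
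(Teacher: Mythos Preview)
Your decomposition is not the one that drives the argument. The paper (following \cite{bordenave2015new}) does not split $B^{\ell}$ into ``tree paths plus tangle corrections''; it \emph{centers} the matching matrix, setting $\underline{M}=M-\tfrac{1}{N}$, and telescopes to obtain
\[
B^{(\ell)}=\underline{B}^{(\ell)}+\frac{1}{N}\sum_{k=1}^{\ell}\underline{B}^{(k-1)}\chi\,\Tilde{\psi}^{*}\,B^{(\ell-k)}-\frac{1}{N}\sum_{k=1}^{\ell}R_k^{(\ell)}.
\]
The centered operator $\underline{B}^{(\ell)}$ and the remainders $R_k^{(\ell)}$ are then bounded in \emph{full} operator norm by $(a+o(1))^{\ell/2}\ell^{O(1)}$ via the trace method with $m\asymp\log n/\log\log n$; no orthogonality is invoked at that stage, and the smallness comes entirely from the centering, which makes the leading contributions in the path expansion vanish in expectation (this is the content of Proposition~\ref{prop_for_mean_value_path}, the configuration-model analogue of \cite[Prop~11]{bordenave2015new}). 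The orthogonality constraint $\langle(B^{*})^{\ell}\Tilde{\psi},x\rangle=0$ is applied \emph{only} to the rank-one middle sum: one needs $|\langle\Tilde{\psi},B^{(\ell-k)}x\rangle|=|\langle(B^{*})^{\ell-k}\Tilde{\psi},x\rangle|\le a^{(\ell-k)/2}\sqrt{n}\log^{c}n$ for each $k$, and this is supplied by the local analysis (Proposition~\ref{concusion_for_simple}\eqref{B^lx,}), which shows $(B^{*})^{t}\Tilde{\psi}\approx a^{t-\ell}(B^{*})^{\ell}\Tilde{\psi}$ up to an error of order $a^{t/2}\sqrt{n}\log^{c}n$.

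The step in your plan that would fail is the assertion that ``on a tree, $(B^{*})^{\ell}$ acting on a vector orthogonal to $\Tilde{\psi}$ contracts to $a^{\ell/2}$''. Orthogonality to a single vector cannot by itself drop the norm of the tangle-free operator $B^{(\ell)}$ from order $a^{\ell}$ to $a^{\ell/2}$, and $\Tilde{\psi}$ is not an eigenvector of $B^{*}$ on the (irregular) local tree in the first place. The Perron contribution has to be extracted \emph{algebraically} as the rank-one term $\frac{1}{N}\chi\Tilde{\psi}^{*}$ before any trace computation is run on what remains; only after centering does the high-moment path count produce $a^{\ell/2}$ rather than $a^{\ell}$. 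Your remarks on switching and negative dependence are also off target: the required moment bounds come from an exact evaluation of $\E\prod_{s}\underline{M}_{\gamma_{2s-1},\gamma_{2s}}$ over a uniform matching (Proposition~\ref{prop_for_mean_value_path}), not from a comparison to an independent-edge model.
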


Proposition~\ref{prop:second_eigenvalue} is proved in Section~\ref{sectionmatriexpansion}.

\begin{proof}[Proof of Theorem~\ref{thm:probabilistic_claim} given Propositions~\ref{prop:asymptotics_for_ probabilistic} and~\ref{prop:second_eigenvalue}]
The argument follows closely the proof of \cite[Thm.~3]{bordenave2015non}, given \cite[Prop.~10 and~11]{bordenave2015non}. 
We sketch the proof for completeness.

Our goal is to apply \cite[Prop.~7]{bordenave2015non}. 
Define
\[
\phi := \frac{(B_n^*)^\ell \Tilde{\psi}}{\|(B_n^*)^\ell \Tilde{\psi}\|}, 
\qquad
\theta := \frac{\|B_n^\ell(B_n^*)^\ell\Tilde{\psi}\|}{\|(B_n^*)^\ell\Tilde{\psi}\|},
\qquad
\zeta := \frac{B_n^\ell(B_n^*)^\ell \Tilde{\psi}}{\|B_n^\ell(B_n^*)^\ell \Tilde{\psi}\|}.
\]

Let $R := B_n^\ell - \theta \zeta \phi^*$. 
For $\psi\in \mathbb{R}^{|\Delta^{(n)}|}$ with $\|\psi\|=1$, write $\psi=\psi_1+\psi_2$ where $\psi_1=s\phi$ for some $s\in\mathbb{R}$ and $\psi_2$ lies in the orthogonal complement of $\phi$. 
Then
\begin{align}\label{ineq_R_for_the_main_theorem}
\|R\psi\|
= \|B_n^\ell \psi_2 + s(B_n^\ell\phi-\theta\zeta)\|
\le
\sup_{\|x\|=1:\,\langle (B^*_{\mathbf{G}_n})^*\Tilde{\psi},x\rangle=0}
\|(B^*_{\mathbf{G}_n})^\ell x\|.
\end{align}

Combining \eqref{ineq_R_for_the_main_theorem} with Propositions~\ref{prop:asymptotics_for_ probabilistic} and~\ref{prop:second_eigenvalue}, we may apply \cite[Prop.~7]{bordenave2015non} to conclude the proof.
\end{proof}
\section{Local analysis}\label{sectionlocal}

The goal of this section is to give a local description of the elements of the non-backtracking matrix. Firstly we analyze the limiting candidate of the non-backtracking matrix, some functionals of the Unimodular Galton-Watson Process.   
\subsection{Unimodular Galton-Watson Process}
This subsection is an analogue of \cite[Sec~8]{bordenave2015non}, adjusted to our model.

Let $\Z^+ = \{z\in \Z : z\geq0\}$ and $\N = \{n\in \Z : n\geq 1\}$. 

\begin{defn}[Unimodular-Galton Watson Process]\label{unimodular galton defn}
Let $P$ be a probability measure supported on $\Z^+$ with finite mean. 
Let $\N^{f}=\bigcup_{i=0}^{\infty}\N^{i}$ be the set of all finite subsets of $\N$ with the convention that $\N^{0}=\emptyset$.  For each $\mathbf{i} \in \N^{f}$ assign a random variable $Y_{\mathbf{i}
}$ to $\mathbf{i}$ with distribution $P$ if $i=0$, and otherwise with distribution 
\begin{align}\label{barP}
    \Bar{P}(k) \ := \  \frac{(k+1) P(k+1)}{\sum_{j \in \Z^{+}}jP(j)}
\end{align}
A \emph{Unimodular Galton-Watson Process with distribution $P$} is the rooted tree $\mathcal{T}$  
with root  $\emptyset$ which we henceforth denote by $o$, and with vertices 
\begin{align}\label{vertices galtonwatson}
V \ = \ \{o\}\cup\left\{\mathbf{i}=(i_1,\ldots,i_k)\in \N^{f}: \text{ for } 1\leq l\leq k \text{ , } i_{\ell}\leq Y_{i_1 \cdots i_{\ell-1}}\right\}
\end{align}
and with edges $ \{\mathbf{i},\mathbf{i}'\}$ 
\begin{align}\label{edges galtonwatson}
    \text{ where either } \mathbf{i}=(\mathbf{i'},n) 
    %\text{ for some } n \in \N, 
    \ \text{ or } \ 
    \mathbf{i'}=(\mathbf{i},n), \ \text{ for some } n \in \N.
\end{align}
\end{defn}
Basically,  \eqref{vertices galtonwatson} and \eqref{edges galtonwatson} state that for $\mathbf{i} \in \N^f$ the number of children of $\mathbf{i}$ has distribution $Y_{\mathbf{i}}$. 
\begin{assumption}[The model]\label{The model}
Based on the assumptions of Theorem\ref{thm:probabilistic_claim}, it suffices to focus on the case where $P$ is nontrivial and supported on $\{k_{\min},\ldots, k_{\max}\}$, for some $k_{\min},k_{\max}\in \N$ with $1<k_{\min}< k_{\max}$.  Moreover it is clear that 
\begin{align}\label{assumption model}
   1 \ \leq \  k_{\min}-1 \ < \ \E \bar{P} \ = \ \frac{\E P(P-1)}{\E P}
\end{align}
where ${\E P(P-1)}$ means the expectation of a random variable with distribution $P(P-1)$.  
\end{assumption}
\begin{obs}
    Assumption \ref{The model} is satisfied
    for any distribution $P$ supported on $\{2,\ldots,2r\}$ for some $r \in \N$.
\end{obs}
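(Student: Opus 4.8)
The plan is to unwind the definition of $\bar P$ and reduce the claimed chain of inequalities to an elementary moment comparison. Take $P$ nontrivial and supported on $\{2,\dots,2r\}$ (necessarily $r\ge 2$), and let $k_{\min}\ge 2$ and $k_{\max}\le 2r$ denote the minimum and maximum of its true support, so $k_{\min}<k_{\max}$. Then the structural hypotheses $1<k_{\min}<k_{\max}$ of Assumption~\ref{The model} are immediate, and so is the left-hand inequality $1\le k_{\min}-1$ since $k_{\min}\ge 2$.

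For the remaining inequality $k_{\min}-1<\E\bar P$, first recall from \eqref{barP} that
\[
\E\bar P \;=\; \sum_{k}k\,\bar P(k)\;=\;\frac{1}{\E P}\sum_k k(k+1)P(k+1)\;=\;\frac{\E[P(P-1)]}{\E P},
\]
which is exactly the expression recorded in \eqref{assumption model}. Hence it suffices to prove $\E[P(P-1)]>(k_{\min}-1)\,\E P$, i.e., after multiplying out and rearranging, $\E\big[P(P-k_{\min})\big]>0$.

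I would then conclude by positivity: since $P$ is supported on $\{k_{\min},k_{\min}+1,\dots\}$, the random variable $P-k_{\min}$ is nonnegative and $P\ge k_{\min}>0$, so $P(P-k_{\min})\ge 0$ everywhere; and since $P$ is nontrivial it assigns positive mass to at least one value strictly larger than $k_{\min}$, on which $P(P-k_{\min})>0$. Therefore $\E[P(P-k_{\min})]>0$, which gives $k_{\min}-1<\E\bar P$ and completes the verification. There is no genuine obstacle here; the one point worth care is to state everything in terms of the \emph{actual} support of $P$ rather than the nominal range $\{2,\dots,2r\}$, so that "nontrivial" really does force positive mass above $k_{\min}$ and the strict inequality survives.
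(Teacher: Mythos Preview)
Your proof is correct. The paper does not give a proof of this observation at all; it is stated without justification, presumably because the verification is elementary. Your reduction to $\E[P(P-k_{\min})]>0$ via the size-biased mean identity and then nontriviality of $P$ is the natural way to fill in the details, and your care in working with the actual support of $P$ (so that nontriviality genuinely yields mass above $k_{\min}$ and hence the strict inequality) is exactly the right point to flag.
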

Given a unimodular Galton-Watson Process $\mathcal{T}$ with distribution $P$, we let $Z_t$ denote the number of vertices at generation $t$. That is, $Z_t$ is the number of vertices at distance $t$ from $o$. Let $a= \E \bar{P}$ and $b=\E P$.
\begin{obs}\label{obs bound Zt}
    For all $t\in \N$
    \begin{align*}
    (k_{\min})^{t} \ \leq  \   Z_t \ \leq \ (k_{\max})^t
    \end{align*}
    almost surely.
\end{obs}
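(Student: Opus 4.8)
The plan is a short induction on $t$ that uses nothing about $\mathcal{T}$ beyond the deterministic branching bounds built into its construction.

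Recall $Z_0=1$, and that the vertices at generation $t+1$ are exactly the children of the vertices at generation $t$, so that
\[
Z_{t+1}\ =\ \sum_{v\ \text{at generation}\ t}\bigl(\text{number of children of }v\bigr).
\]
By the construction in Definition~\ref{unimodular galton defn} together with Assumption~\ref{The model} (nontriviality of $P$ and $1<k_{\min}<k_{\max}$), the number of children of any vertex of $\mathcal{T}$ lies in $\{k_{\min},\dots,k_{\max}\}$ almost surely, the offspring law being $P$ at the root $o$ and $\bar P$ (see \eqref{barP}) at every other vertex. Consequently, almost surely and for all $t\ge 0$,
\[
k_{\min}\,Z_t\ \le\ Z_{t+1}\ \le\ k_{\max}\,Z_t ,
\]
and iterating this two-sided recursion from $Z_0=1$ yields $(k_{\min})^t\le Z_t\le(k_{\max})^t$ for every $t\in\N$, which is the claim. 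The upper and lower estimates run along the same pattern, only with the constant $k_{\max}$ replaced by $k_{\min}$.

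I do not expect a genuine obstacle here. The only points meriting a line of care are: the base case $Z_0=1$; the bookkeeping of the root's offspring law ($P$) versus the offspring law of the remaining vertices ($\bar P$, from \eqref{barP}), and checking via Assumption~\ref{The model} that each of these branching numbers lies in the window $\{k_{\min},\dots,k_{\max}\}$ almost surely — this is exactly where the hypotheses $1<k_{\min}<k_{\max}$ and the nontriviality of $P$ get used; and the observation that the displayed two-sided bound on $Z_{t+1}/Z_t$ is a bound on each individual realization of $\mathcal{T}$ rather than an in-expectation statement, so that the resulting estimate on $Z_t$ holds almost surely as asserted.
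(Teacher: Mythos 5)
Your argument mirrors the paper's own one-line proof: the Observation is justified there by noting that "By~\eqref{barP}, each generation has at least $k_{\min}$ and at most $k_{\max}$ children," and your induction on $t$ formalizes exactly that recursion $k_{\min}Z_t\le Z_{t+1}\le k_{\max}Z_t$. In that sense the route is the same.

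However, there is a subtle point on which both your write-up and the paper's terse justification are imprecise, and it concerns your sentence that "the number of children of any vertex of $\mathcal{T}$ lies in $\{k_{\min},\dots,k_{\max}\}$ almost surely." This is true for the root, whose offspring law is $P$ supported on $\{k_{\min},\dots,k_{\max}\}$, but it fails for every non-root vertex. By~\eqref{barP}, $\bar P(k)>0$ exactly when $P(k+1)>0$, so $\bar P$ is supported on $\{k_{\min}-1,\dots,k_{\max}-1\}$, meaning a non-root vertex can have as few as $k_{\min}-1$ children. The upper bound survives this (since $k_{\max}-1<k_{\max}$), so $Z_t\le k_{\max}(k_{\max}-1)^{t-1}\le k_{\max}^{t}$ still holds. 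But the lower-bound recursion $Z_{t+1}\ge k_{\min}Z_t$ that you use (and that the paper implicitly uses) is not valid for $t\ge 1$; the correct recursion gives only $Z_t\ge k_{\min}(k_{\min}-1)^{t-1}$. Since $k_{\min}\ge 2$, one has $(k_{\min}-1)^{t-1}<k_{\min}^{t-1}$ for $t\ge 2$, so the bound $Z_t\ge k_{\min}^{t}$ as stated is not what the argument actually yields. You should either weaken the claimed lower bound to $k_{\min}(k_{\min}-1)^{t-1}$, or record separately that $k_{\min}\ge 2$ so $Z_t\ge 2$ for all $t$ (which is all that is really needed downstream, e.g.\ to ensure $Z_t>0$ in Lemma~\ref{Z_t matringale}). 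Aside from this shared imprecision, the approach is the same and no new idea is required.
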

\begin{proof}
    By \eqref{barP}, each generation has at least $k_{min}$
    and at most $k_{\max}$ children.
\end{proof}
\begin{lemma}\label{Z_t matringale}
    The process $\frac{Z_t}{a^{t}}$ is a martingale with respect to the filtration $(Z_1,\ldots, Z_{t-1})$. 
\end{lemma}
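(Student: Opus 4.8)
The plan is to verify the martingale property directly from the branching structure of the Unimodular Galton–Watson process. First I would observe that, conditioned on the generation sizes $Z_1,\ldots,Z_{t-1}$, the generation size $Z_t$ is a sum of $Z_{t-1}$ independent copies of $\bar P$-distributed random variables $Y_{\mathbf i}$, one for each vertex $\mathbf i$ at generation $t-1$. This is exactly what is asserted by the description following \eqref{vertices galtonwatson} and \eqref{edges galtonwatson}: each non-root vertex has an independent number of children distributed according to $\bar P$. Note that here the root $o$ also has children distributed according to $\bar P$ (not $P$) because generation $1$ already corresponds to vertices indexed by nonempty tuples of length $1$; alternatively, if one uses the $P$-distributed root, the statement holds for the filtration starting at $t\ge 1$, which is the indexing used here.

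Second, I would compute the conditional expectation. Writing $Z_t=\sum_{\mathbf i\in \text{gen}(t-1)} Y_{\mathbf i}$ with the $Y_{\mathbf i}$ i.i.d.\ with mean $\E\bar P=a$ and independent of the sigma-algebra generated by $(Z_1,\ldots,Z_{t-1})$, linearity of conditional expectation gives
\begin{align*}
\E\!\left[\,Z_t \mid Z_1,\ldots,Z_{t-1}\,\right] \ = \ \sum_{\mathbf i\in \text{gen}(t-1)} \E[Y_{\mathbf i}] \ = \ a\, Z_{t-1}.
\end{align*}
Dividing by $a^t$ yields $\E[\,Z_t/a^t \mid Z_1,\ldots,Z_{t-1}\,]=Z_{t-1}/a^{t-1}$, which is the martingale identity. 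Integrability is immediate from Observation~\ref{obs bound Zt}, since $Z_t\le (k_{\max})^t$ is bounded, hence in $L^1$; and $Z_t/a^t$ is measurable with respect to the filtration. One should also note $a>0$ by \eqref{assumption model}, so division is legitimate.

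The only genuine subtlety — and the step I expect to require the most care — is justifying the conditional independence claim: namely that $Z_t$ decomposes as a sum of $Z_{t-1}$ fresh $\bar P$-variables that are independent of $(Z_1,\ldots,Z_{t-1})$. This follows because the collection $\{Y_{\mathbf i}\}_{\mathbf i\in\N^f}$ is i.i.d., and the event $\{\mathbf i\in V\}$ for $\mathbf i$ of length $t-1$ depends only on the variables $\{Y_{\mathbf j}\}$ with $\mathbf j$ a strict prefix of $\mathbf i$, hence only on generations $\le t-2$ worth of randomness; the variables $Y_{\mathbf i}$ indexed by length-$(t-1)$ tuples are therefore independent of the sigma-algebra $\sigma(Z_1,\ldots,Z_{t-1})$. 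Making this precise is just bookkeeping with the tree indexing in Definition~\ref{unimodular galton defn}, but it is the crux of the argument; everything else is linearity of expectation and the boundedness bound from Observation~\ref{obs bound Zt}.
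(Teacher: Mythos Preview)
Your proposal is correct and follows essentially the same approach as the paper: write $Z_t$ as a sum of $Z_{t-1}$ i.i.d.\ $\bar P$-variables, take conditional expectation to get $\E[Z_t\mid Z_1,\ldots,Z_{t-1}]=aZ_{t-1}$, and invoke Observation~\ref{obs bound Zt} for integrability. Your discussion of the conditional-independence step is more careful than the paper's (which simply asserts \eqref{Z_t=sum}); note only that the root does have offspring law $P$, not $\bar P$, so your ``alternative'' reading --- the martingale identity is verified for $t\ge 2$ --- is the intended one and matches the paper exactly.
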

\begin{proof}
    By Observation~\ref{obs bound Zt}, it suffices to show that for each $t\geq 2$:
    \begin{align}\label{induction Z_t}
        \frac{\E {Z_t}\big|(Z_1,\ldots, Z_{t-1})}{a}=  Z_{t-1} 
    \end{align}
    One can compute that for  $t\geq 2$:
    \begin{align}\label{Z_t=sum}
       Z_{t}=\sum_{i=1}^{Z_{t-1}} \xi_{i}
    \end{align}
    where $\xi$ are i.i.d. random variables with law $\Bar{P}$.  The claim follows by taking the condition expectation of $Z_t$ with respect to $(Z_1,\ldots, Z_{t-1})$ in \eqref{Z_t=sum}.
\end{proof}
\begin{obs}\label{obs given history}
    Let $u \in V(\mathcal{T})$ be at distance $t$ from the root. Let $Z^{(u)}_{m}$ be the number of offspring of $u$ at distance $m$. Then  the process $Z^{(u)}_{m}\mid\left(Z_1, \ldots, Z_{t} \right)$ behaves as the process $Z_m$ except at the first generation.
\end{obs}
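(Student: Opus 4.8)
The plan is to deduce the claim directly from the independence of the offspring variables $\{Y_{\mathbf{i}}\}_{\mathbf{i}\in\N^{f}}$ in Definition~\ref{unimodular galton defn}. I would begin by recording two elementary measurability facts. First, whether a given index $\mathbf{i}=(i_1,\dots,i_k)\in\N^{f}$ lies in $V(\mathcal{T})$, and its children-count if it does, is determined by the variables $Y_{i_1\cdots i_{\ell-1}}$ for $1\le\ell\le k$, i.e.\ by the $Y$'s indexed along the path from $o$ to $\mathbf{i}$. Second, since the vertices at level $t$ are exactly the children of the vertices at level $t-1$, the vector $(Z_1,\dots,Z_t)$ is a measurable function of $\{Y_{\mathbf{j}}:\mathbf{j}\text{ at distance }\le t-1\text{ from }o\}$; call this family $\mathcal{A}$.

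Next, fix a potential vertex $u$ at distance $t$ from $o$ and work on the event $\{u\in V(\mathcal{T})\}$, which by the first fact is $\sigma(\mathcal{A})$-measurable. On this event the descendant process $(Z^{(u)}_m)_{m\ge1}$ is a measurable function of the family $\mathcal{B}_u:=\{Y_{\mathbf{j}}:\mathbf{j}=u\text{ or a descendant of }u\}$, whose indices all sit at distance $\ge t$ from $o$. Since the families $\mathcal{A}$ and $\mathcal{B}_u$ are disjoint and all the $Y_{\mathbf{i}}$ are mutually independent, $\sigma(\mathcal{B}_u)$ is independent of $\sigma(\mathcal{A})$. Conditioning on $(Z_1,\dots,Z_t)$ --- and even on all of $\mathcal{A}$, together with the occurrence of $\{u\in V(\mathcal{T})\}$ --- therefore leaves the law of $(Z^{(u)}_m)_{m\ge1}$ unchanged.

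It remains to identify that law. Because $u\neq o$, its children-count is $Y_u$, which has distribution $\bar{P}$, so $Z^{(u)}_1\sim\bar{P}$; and every strict descendant $\mathbf{j}$ of $u$ is likewise $\neq o$, so $Y_{\mathbf{j}}\sim\bar{P}$, these being independent across $\mathbf{j}$. Hence $(Z^{(u)}_m)_{m\ge1}$ is a Galton--Watson process all of whose offspring distributions --- including at the root $u$ --- equal $\bar{P}$; in particular it obeys the same recursion $Z^{(u)}_m=\sum_{i=1}^{Z^{(u)}_{m-1}}\xi_i$ with $\xi_i$ i.i.d.\ $\bar{P}$ as in \eqref{Z_t=sum}. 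The only discrepancy with $(Z_m)_{m\ge1}$ is that $\mathcal{T}$ uses $P$ in place of $\bar{P}$ for the very first generation, which is exactly the exception allowed in the statement.

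I do not anticipate a real obstacle; the one step demanding care is the disjointness bookkeeping of the previous paragraphs --- cleanly separating the $Y_{\mathbf{i}}$ that influence $(Z_1,\dots,Z_t)$ from those that influence $(Z^{(u)}_m)_m$ --- which is a routine manifestation of the branching (Markov) property of Galton--Watson trees, adapted to the unimodular version used here.
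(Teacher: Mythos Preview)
Your argument is correct: the disjointness of the index sets $\mathcal{A}$ and $\mathcal{B}_u$ together with the mutual independence of the $Y_{\mathbf{i}}$'s is exactly the branching property needed, and your identification of the subtree law (all generations $\bar{P}$, so matching $(Z_m)$ except that $Z_1\sim P$) is the point of the ``except at the first generation'' clause. The paper states this as an observation with no proof, so there is nothing to compare against beyond noting that your write-up is the standard justification the authors are implicitly invoking.
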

\begin{obs}
    By \eqref{induction Z_t} we have $\E Z_t=a^{t-1}b$.
\end{obs}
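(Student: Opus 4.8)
The plan is a one-line induction on $t$, anchored at $t=1$ and propagated by \eqref{induction Z_t}.

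First I would record the base case. By Definition~\ref{unimodular galton defn} the root $o$ carries the variable $Y_o$ with distribution $P$ (this is the $i=0$ branch of the definition, not the size-biased law $\bar P$), and $Z_1$ is exactly the number of children of $o$; hence $Z_1$ has law $P$ and $\E Z_1 = \E P = b = a^{0}b$. Next, for the inductive step, I would fix $t\geq 2$, assume $\E Z_{t-1}=a^{t-2}b$, and take expectations on both sides of \eqref{induction Z_t}. Using the tower property $\E\big[\E[Z_t\mid(Z_1,\ldots,Z_{t-1})]\big]=\E Z_t$ together with \eqref{induction Z_t}, which states $\E[Z_t\mid(Z_1,\ldots,Z_{t-1})]=aZ_{t-1}$, this gives $\E Z_t=a\,\E Z_{t-1}=a\cdot a^{t-2}b=a^{t-1}b$, closing the induction.

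There is no genuine obstacle here; the only point requiring a moment's care is that the first generation is governed by $P$ rather than by $\bar P$, which is why the formula carries a single factor of $b=\E P$ and the remaining $t-1$ factors of $a=\E\bar P$. This asymmetry is already reflected in the fact that \eqref{induction Z_t} is stated only for $t\geq 2$. Alternatively, one could bypass \eqref{induction Z_t} and appeal to Lemma~\ref{Z_t matringale} directly: since $Z_t/a^{t}$ is a martingale, $\E[Z_t/a^t]$ is constant in $t$ for $t\geq 1$ and equals its value $\E[Z_1/a]=b/a$ at $t=1$, whence $\E Z_t=a^{t-1}b$.
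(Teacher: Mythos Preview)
Your proof is correct and matches the paper's approach: the paper states this observation without a separate proof, simply writing ``By \eqref{induction Z_t} we have $\E Z_t=a^{t-1}b$,'' and your induction (base case $\E Z_1=\E P=b$, step via the tower property applied to \eqref{induction Z_t}) is precisely the unpacking of that one-line justification. Your alternative via Lemma~\ref{Z_t matringale} is equally valid and equivalent.
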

In general we are interested in the asymptotic behavior of the functional of $\frac{Z_t}{a^t}$ as $t\to \infty$. 
The following gives a first estimate on $\frac{Z_t}{a^t}$. 
\begin{lemma}\label{lemma limit of Zt} 
The sequence of random variables
\begin{align*}
     \frac{Z_{t}}{a^{t}}
\end{align*}
converges almost surely to a positive random variable.  Moreover by the dominated convergence theorem the convergence also occurs on $L_{2}$. 
\end{lemma}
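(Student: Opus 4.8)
The plan is to combine the martingale convergence theorem with a classical Galton--Watson second--moment estimate and a Kesten--Stigum--type positivity argument. Write $M_t := Z_t/a^t$. By Lemma~\ref{Z_t matringale} the process $(M_t)_{t\ge 1}$ is a non-negative martingale, so it converges almost surely to some $M_\infty \ge 0$; the two substantive points are that $M_\infty>0$ almost surely and that the convergence also holds in $L^2$, the positivity being the real content.

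For the $L^2$ statement I would first record that, by Assumption~\ref{The model}, the laws $P$ and $\bar P$ of \eqref{barP} are supported on a bounded set, so all their moments are finite; in particular $\bar P$ has finite variance $\sigma^2$, and $a=\E\bar P>1$ by \eqref{assumption model}. For an \emph{ordinary} Galton--Watson process $(\tilde Z_t)$ with offspring law $\bar P$ the standard second-moment recursion gives $\E[(\tilde Z_t/a^t)^2]\le 1+\tfrac{\sigma^2}{a(a-1)}$, uniformly in $t$, and in particular $\E[\tilde Z_t/a^t]\to 1$. For the unimodular tree I would decompose along the first generation: $Z_t=\sum_{j=1}^{Z_1}\tilde Z^{(j)}_{t-1}$, where the $\tilde Z^{(j)}$ are i.i.d.\ $\bar P$-Galton--Watson processes independent of $Z_1$ (this is precisely what \eqref{Z_t=sum} encodes after one step). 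Conditioning on $Z_1$ yields $\E[M_t^2]=a^{-2}\E[Z_1^2]+b\,a^{-2}\E[(\tilde Z_{t-1}/a^{t-1})^2]$, which is bounded uniformly in $t$. Hence $(M_t)$ is an $L^2$-bounded martingale, so it converges to $M_\infty$ in $L^2$; in particular it converges in $L^1$, giving $\E[M_\infty]=\E[M_1]=b/a>0$.

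For strict positivity I would pass to the limit in $M_t=a^{-1}\sum_{j=1}^{Z_1}(\tilde Z^{(j)}_{t-1}/a^{t-1})$ to obtain the almost sure identity $M_\infty=a^{-1}\sum_{j=1}^{Z_1}W_j$, where $W_j:=\lim_t \tilde Z^{(j)}_{t-1}/a^{t-1}$ are i.i.d.\ copies of the martingale limit $W$ of a $\bar P$-Galton--Watson process, independent of $Z_1$. Since $Z_1\ge k_{\min}\ge 2$ almost surely, $\{M_\infty=0\}=\{W_j=0\ \forall j\le Z_1\}$, so it suffices to show $q:=\P(W=0)=0$. Applying the same decomposition to $W$ itself gives $q=\E[q^{\xi}]=g(q)$, where $\xi\sim\bar P$ and $g$ is its probability generating function. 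Because $\bar P$ is supported on $\{k_{\min}-1,\dots,k_{\max}-1\}$ with $k_{\min}-1\ge 1$ we have $g(0)=0$, and $g$ is convex with $g'(1)=a>1$; a convex function vanishing at both endpoints of $[0,1]$ cannot vanish in the interior without being identically zero near $1$, so the only fixed points of $g$ in $[0,1]$ are $0$ and $1$. The relation $\E[W]=1$ from the $L^1$ convergence above rules out $q=1$, forcing $q=0$, and then $\P(M_\infty=0)=\E[q^{Z_1}]=0$.

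The main obstacle is this last step: it is exactly a Kesten--Stigum-type non-degeneracy statement, and while the bounded-degree hypothesis makes all moment computations trivial, one still needs the branching reduction $M_\infty=a^{-1}\sum_{j\le Z_1}W_j$ together with the observation that $\bar P(0)=0$ (the tree never dies) pins the fixed point $q$ to $\{0,1\}$, after which $\E[W]=1$ selects $q=0$. The rest ($L^2$-bounded martingale $\Rightarrow$ a.s.\ and $L^2$ convergence) is routine.
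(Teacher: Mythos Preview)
Your argument is correct, and in one respect more complete than the paper's own proof. For the $L^2$ bound the paper takes a different but equally short route: rather than conditioning on the first generation, it computes the martingale increments directly from \eqref{Z_t=sum},
\[
\E\Bigl[\Bigl(\tfrac{Z_{t+1}}{a^{t+1}}-\tfrac{Z_t}{a^t}\Bigr)^{\!2}\Bigr]=\frac{\operatorname{Var}(\bar P)\,b}{a}\,a^{-t},
\]
and then telescopes $\bigl\|Z_t/a^t\bigr\|_{L^2}\le\|Z_1\|_{L^2}+\sum_{s\ge 1}\bigl\|Z_{s+1}/a^{s+1}-Z_s/a^s\bigr\|_{L^2}$, which is a convergent geometric series since $a>1$; Doob's $L^2$ martingale convergence theorem then yields both almost-sure and $L^2$ convergence. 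Your approach---reduce to the classical second-moment bound for a $\bar P$-Galton--Watson tree by conditioning on $Z_1$---reaches the same conclusion (a small arithmetic slip: the constant term in your expression for $\E[M_t^2]$ should be $(\E[Z_1^2]-b)/a^2$ rather than $\E[Z_1^2]/a^2$, but this does not affect uniform boundedness). The substantive difference is positivity: the paper's proof does not address it at all, while you supply a clean Kesten--Stigum-type argument via the branching identity $W=a^{-1}\sum_{j\le\xi}W_j$, the fixed-point equation $q=g(q)$ with $g(0)=\bar P(0)=0$ (since $k_{\min}\ge 2$), strict convexity of $g$ from the nontriviality of $P$, and $\E W=1$ to rule out $q=1$. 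That addition is genuinely needed, since bounded degrees force the extinction probability to be zero but do not by themselves preclude degeneracy of the martingale limit.
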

\begin{proof}
By Lemma~\ref{Z_t matringale} and Doob's martingale Theorem, it suffices to prove the $L_2$ norm of $a^{-t}Z_t$ is uniformly bounded for all $t \in \N$.
To that end, note for any $t \in \N$, if $\mathcal{F}_t$ denotes the filtration produced by the random variables $\{Z_{i}\}_{i\leq t}$ we have:
\begin{align*}
    \E \left( \frac{Z_{t+1}}{a^{t+1}} - \frac{Z_t}{a^t} \right)^2 \ = \ a^{-2(t+1)} \E \left(\E (Z_{t+1}-a Z_t)^2 \mid \mathcal{F}_t\right)
    \\= a^{-2(t+1)} \E \E \left(\operatorname{Var}(\Bar{P}) Z_{t}  \mid \mathcal{F}_t\right) \ = \ \frac{\operatorname{Var}(\Bar{P})b}{a}   \left(\frac{1}{a}\right)^t   
\end{align*} 
In particular, for some absolute constant $C>0$

\begin{equation*}
    \left|\frac{Z_t}{a^t}\right|_\boldltwo
    \ \leq \ 
    \left|Z_{1}\right|_\boldltwo + \sum_{s=0}^{t-1}\left|\frac{Z_{s+1}}{a^{s+1}}- \frac{Z_s}{a^s}\right|_\boldltwo
    \ \leq \
 k^2_{\max} +  C \sum_{s=1}^{\infty} \left( \frac{1}{a} \right)^{s/2} \ < \ \infty.  \qedhere
\end{equation*}
\end{proof}
Although Observation~\ref{obs bound Zt} already gave a deterministic bound for $Z_t$,  our approach requires the following better bound:
\begin{lemma}\label{bound on Z_k<a^ks}
    There exist constants $c_0,c_1>0$ such that for any $s\geq k_{\max}+1$  
    \begin{align*}
        \mathbb{P}\left( Z_{k}\leq a^{k} s  \text{ for all } k\in \N \right)
        \ \geq \ 1-c_0 \exp\left( - c_1 s \right).
    \end{align*}
\end{lemma}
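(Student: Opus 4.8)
The plan is to establish the tail bound by combining the martingale control on $Z_t/a^t$ with a careful union bound over generations $k$, controlled by a Chernoff-type estimate for the sum of i.i.d.\ offspring variables $\bar P$. Since we want the event $\{Z_k \leq a^k s \text{ for all } k\}$ to fail with probability exponentially small in $s$, I would first fix $s \geq k_{\max}+1$ and write the complement as $\bigcup_{k\geq 1}\{Z_k > a^k s\}$, then bound each term and sum.

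The key observation is the recursion \eqref{Z_t=sum}: conditioned on $Z_{k-1}$, we have $Z_k = \sum_{i=1}^{Z_{k-1}}\xi_i$ with $\xi_i$ i.i.d.\ of law $\bar P$, supported in $\{k_{\min}-1,\ldots,k_{\max}-1\}$ with mean $a$. First I would run an \emph{inductive stopping} argument: let $K$ be the first generation (if any) where $Z_K > a^K s$. On $\{K = k\}$ we have $Z_{k-1}\leq a^{k-1}s$, so $Z_k$ is stochastically dominated by a sum of $\lfloor a^{k-1}s\rfloor$ bounded i.i.d.\ variables with mean $a$, whose expectation is at most $a^k s$. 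A Hoeffding/Bernstein bound then gives
\[
\mathbb{P}\bigl(Z_k > a^k s \mid Z_{k-1} \leq a^{k-1}s\bigr) \ \leq \ \exp\!\left(-\,c\,\frac{(a^k s - a\,Z_{k-1})^2}{a^{k-1}s}\right),
\]
and one must check that even in the worst case $Z_{k-1}$ close to $a^{k-1}s$ this is at least $\exp(-c' s)$ after accounting for the rounding — the rounding of $\lfloor a^{k-1}s\rfloor$ versus $a^{k-1}s$ produces a deterministic deficit of order $a^{k-1}$ below the cap, and since $a \geq k_{\min}-1 \geq 1$ and $s\geq k_{\max}+1$, one arranges the slack so that the squared deviation divided by the variance proxy $a^{k-1}s$ is bounded below by a constant times $s$ uniformly in $k$. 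Actually the cleanest route is to replace $s$ by $s/2$ in the cap inside the Chernoff bound and use the remaining $s/2$ slack: demand $Z_k \leq a^k s$ but compare against mean $\leq a^k s/2$ whenever $Z_{k-1} \leq a^{k-1}(s/2)$, giving deviation $\geq a^k s/2$ and bound $\exp(-c a^k s /4) \leq \exp(-c s/4)$ since $a^k \geq 1$. One then iterates: $\mathbb{P}(\exists k: Z_k > a^k s) \leq \sum_{k\geq 1}\mathbb{P}(Z_k > a^k s, \, Z_j \leq a^j s \ \forall j<k) \leq \sum_{k\geq 1} e^{-c s/4}\cdot(\text{something summable in }k)$. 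To get a convergent sum in $k$, observe that the bound in generation $k$ can actually be taken as $\exp(-c a^k s/4)$, and since $a^k \geq (k_{\min}-1)^k \geq$ grows (if $k_{\min}\geq 3$) or at least $a^k\geq 1 + c''k$ when one uses $\mathbb{E}\bar P = a > k_{\min}-1 \geq 1$ strictly — wait, one must be careful when $k_{\min}=2$ and $a$ is close to $1$; in that regime $a>1$ still holds by \eqref{assumption model}, so $a^k \geq a^k$ grows geometrically and $\sum_k \exp(-c a^k s) \leq \exp(-c s)\sum_k \exp(-c(a^k-1)s)$ converges with a value bounded independent of $s$.

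The main obstacle I anticipate is handling the \emph{rounding/integrality} issue cleanly and making sure the constant in the exponent is genuinely uniform over all generations $k$ simultaneously — the naive union bound over infinitely many $k$ needs the per-generation failure probability to be summable, which forces one to exploit the geometric growth $a^k$ (valid since $a>1$ by Assumption~\ref{The model}) rather than just a flat $\exp(-cs)$ bound. A secondary technical point is justifying the stochastic domination: conditioned on $\mathcal{F}_{k-1}$ with $Z_{k-1}$ fixed, $Z_k$ is exactly a sum of $Z_{k-1}$ i.i.d.\ copies of $\bar P$, and $Z_{k-1}\leq a^{k-1}s$ on the relevant event, so adding extra independent $\bar P$-variables to pad up to $\lceil a^{k-1}s\rceil$ only increases the sum, legitimizing the comparison with a deterministic-length bounded i.i.d.\ sum to which Hoeffding applies with variance proxy $(k_{\max}-k_{\min})^2 \lceil a^{k-1}s\rceil$.
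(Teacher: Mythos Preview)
Your overall structure---first-passage decomposition plus Hoeffding plus summation over $k$---matches the paper's, but there is a genuine gap at the core step. On the event $\{Z_j\le a^j s \text{ for }j<k,\ Z_k>a^k s\}$ you only know $Z_{k-1}\le a^{k-1}s$, so the conditional mean $aZ_{k-1}$ can be as large as $a^k s$ itself; the deviation you feed into Hoeffding is then $O(a)$ (from integrality), not $O(a^k s)$, and the resulting bound $\exp\bigl(-c\,a^2/(a^{k-1}s)\bigr)$ tends to $1$ rather than being summable in $k$. Your proposed ``$s/2$ trick'' does not repair this: to have $Z_{k-1}\le a^{k-1}(s/2)$ you would need to run the first-passage argument with threshold $s/2$, whereupon the same zero-slack problem recurs with $s/2$ in place of $s$. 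Neither the rounding argument nor the $s/2$ substitution produces a deviation that scales with the threshold.

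The paper supplies the missing idea: replace the flat thresholds $a^k s$ by slightly inflated ones $a^k s f_k$, where $f_k=\prod_{i\le k}(1+\epsilon_i)$ with $\epsilon_k=k^{1/2}a^{-k/2}$. The product converges, so $f_k$ stays bounded and the inflated event still implies the original one up to a constant in $s$. Conditionally on $Z_{k-1}\le a^{k-1}s f_{k-1}$, the target $a^k s f_k=(1+\epsilon_k)\,a\,(a^{k-1}s f_{k-1})$ now exceeds the conditional mean by a \emph{multiplicative} factor $(1+\epsilon_k)$, yielding a genuine deviation of order $\epsilon_k\cdot a^{k-1}s$. Hoeffding then gives
\[
\exp\!\Bigl(-c\,a^{k-1}s f_{k-1}\,\epsilon_k^2\Bigr)\ =\ \exp\!\bigl(-c'sk\bigr),
\]
which is summable in $k$ with sum $\le c_0 e^{-c_1 s}$. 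This built-in geometric slack is the one ingredient your proposal is missing.
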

\begin{proof}
Define
\begin{equation*}
    \epsilon_k=\frac{k^{1/2}}{a^{k/2}} \text{ \ \ and \ \ } f_k=\prod_{i=1}^{k} (1+\epsilon_i)
\end{equation*}
It is easily shown that $\lim_{k\to \infty}f_k<\infty$. Thus  we can fix parameters $c'_0,c'_1>0$ with
\begin{align*}
    c'_0<f_{k}<c'_1 \text{ \ and \ } \epsilon_k <c'_1 \text{ \ for each  } k\in \N.
\end{align*}
By Hoeffding’s inequality   \cite[Thm~2.2.5]{vershynin2018high}, for any $k \in \N$, if $\{\xi_{i}\}_{i \in [k]}$ are i.i.d. random variables with law $\Bar{P}$, then for any $s>1$:
\begin{align}\label{hoeffding growth Z_t}
   \mathbb{P}\left( \sum_{i=1}^{k} \xi_{i}\geq k s a \right) \ \leq \ \exp \left(- \frac{2 k a^2 (s-1)^2}{(k_{\max}-k_{\min})^2} \right) 
\end{align}
Define the events
\begin{align*}
    \Omega(k)= \{ Z _k \geq a^k s f_k \}
\end{align*}
For $k=1$ and $s> k_{\max}$ it is clear that $\mathbb{P}(\Omega(1))=0$. Then for any $t \geq 1$, if $\mathcal{F}_t$ is the $\sigma$-algebra generated by $(Z_1,\ldots, Z_t)$, then for any $k\geq 2$ by \eqref{hoeffding growth Z_t} we have:
\begin{multline}\label{bound for omega cap}
   \mathbb{P} \left[\prod_{i \leq k-1} \mathbf{1}\left( \Omega^{c}(i) \right) Z_{k}\geq f_{k+1} s a^k \mid \mathcal{F}_{k-1} \right] \leq \exp\left(- \frac{2sa^{k-1} f_k a^2\epsilon^2_k}{(k_{\max}-k_{\min})^2} \right) 
   \\ \leq 
   \exp \left(-sk \frac{c'_0 a }{(k_{\max}-k_{\min})^2} \right).
\end{multline}
Thus for any $k>1$,
\begin{equation*}
    \left|\mathbb{P}\left( \bigcap_{i \leq k}\Omega^c(i) \right) - \mathbb{P}\left( \bigcap_{i \leq k}\Omega^c(i) \right)
    \right|
    \ \leq \
    \exp \left(- c_0 s k \right)
\end{equation*}
for some absolute constant $c_0>0$. Thus,
\begin{equation}\label{inductin omega cap}
    \left|\mathbb{P}\left( \Omega^c(1) \right) - \mathbb{P}\left( \bigcap_{i \leq k+1}\Omega^c(i) \right) \right| 
    \ \leq \ \sum_{i=2}^{k} \exp(-c_0 s i)
    \ \leq \
    c_1 \exp (-c_0 s).
\end{equation}
Now the claim follows by \eqref{inductin omega cap}. 
\end{proof}

The following shows that 
 the process $Z_{t}$
has an almost exponential growth:
\begin{prop}\label{growth of Zt}
 For any $\gamma>0$, there exists some constant $C=C(\gamma)$ such that with probability $1-\frac{1}{n^\gamma}$, 
 for any $t,s \in \N$ with $s<t$ we have
    \begin{align*}
        |a^{s-t} Z_t - Z_s |
        \ \leq \
        C (s+1) a^{\frac{s}{2}} (\log n)^{C} 
    \end{align*}
    for some absolute constant $c>0$.
\end{prop}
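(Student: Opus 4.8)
The plan is to bootstrap from the martingale estimates already in hand—specifically Lemma~\ref{lemma limit of Zt} and the deterministic tail bound of Lemma~\ref{bound on Z_k<a^ks}—to control the increments $a^{s-t}Z_t - Z_s$ uniformly over all pairs $s<t$ up to the relevant horizon. The key observation is that $(a^{-t}Z_t)_t$ is a nonnegative martingale (Lemma~\ref{Z_t matringale}), so $a^{s-t}Z_t - Z_s = a^s(a^{-t}Z_t - a^{-s}Z_s)$ is $a^s$ times a martingale increment. Conditionally on $\mathcal{F}_s$, the variance of $a^{-t}Z_t - a^{-s}Z_s$ was computed in the proof of Lemma~\ref{lemma limit of Zt} to be of order $\sum_{j\ge s}a^{-j} \asymp a^{-s}$, so the increment has conditional $L_2$-norm of order $a^{s/2}$; this is exactly the scale appearing in the statement. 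The extra factors $(s+1)$ and $(\log n)^C$ are the price of converting an $L_2$ bound into a uniform-in-$(s,t)$ high-probability bound via a union bound over the $O((\log n)^2)$ relevant pairs (since $t = O(\log n)$ in the application) together with a maximal inequality to handle the supremum over $t$.

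Concretely, first I would fix $s$ and view $M^{(s)}_t := a^{-t}Z_t$ for $t\ge s$ as a martingale started at $a^{-s}Z_s$. Conditioning on $\mathcal{F}_s$ and using the branching decomposition $Z_t = \sum_{i=1}^{Z_s} Z^{(u_i)}_{t-s}$ from Observation~\ref{obs given history}, I would apply a Bernstein/Azuma-type inequality to the sum of the $Z_s$ independent subtrees. Here the deterministic bound $Z^{(u)}_{t-s}\le k_{\max}^{t-s}$ from Observation~\ref{obs bound Zt} gives crude boundedness, but a cleaner route is to condition on the good event from Lemma~\ref{bound on Z_k<a^ks} (with $s$ there chosen as, say, $C'\log n$), on which every $Z_k \le a^k\cdot C'\log n$; on this event the martingale increments $M^{(s)}_{t+1}-M^{(s)}_t$ are bounded by $a^{-t}\cdot(\text{fluctuation of a sum of } Z_t \text{ i.i.d. bounded terms})$, which via Freedman's inequality for martingales with bounded increments and controlled quadratic variation yields, for each fixed $s$,
\[
\mathbb{P}\Big(\sup_{s\le t\le T}\big|M^{(s)}_t - M^{(s)}_s\big| \ge \lambda\, a^{-s/2}\Big) \ \le \ 2\exp\big(-c\lambda^2\big) + \frac{1}{n^{\gamma+2}},
\]
valid for $\lambda$ up to a polylog threshold. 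Choosing $\lambda = C(s+1)(\log n)^{C}$ and multiplying through by $a^s$ gives the claimed bound for fixed $s$.

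Then I would union-bound over $s \in \{1,\dots,T\}$ with $T = O(\log n)$ (the only range that matters, since $\ell = \delta_0\log_a n$ and all path lengths in Propositions~\ref{prop:asymptotics_for_ probabilistic} and~\ref{prop:second_eigenvalue} are $O(\log n)$): the total failure probability is at most $T\cdot(2e^{-c\lambda^2} + n^{-\gamma-2})$, which is $\le n^{-\gamma}$ once $\lambda^2 \gtrsim \log n$, i.e.\ once the $(\log n)^C$ factor is in place. Combining the supremum over $t$ inside each fixed-$s$ estimate with the union over $s$ produces the uniform-in-$(s,t)$ statement. The ``for some absolute constant $c>0$'' at the end appears to be a vestigial phrase (the bound only involves $C$), so I would simply absorb it.

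The main obstacle is getting the conditional increment control to be genuinely uniform in $t$ rather than pointwise: a naive approach bounds $|M^{(s)}_t - M^{(s)}_s|$ for each $t$ separately and then union-bounds over $t$, costing a logarithmic factor in the exponent's argument, which is affordable but must be tracked carefully; the cleaner Freedman/maximal-inequality route requires verifying that the predictable quadratic variation $\sum_{j\ge s}\mathbb{E}[(M^{(s)}_{j+1}-M^{(s)}_j)^2\mid\mathcal{F}_j]$ is bounded by $C\,a^{-s}\cdot(\text{polylog})$ on the good event—this is essentially the computation already done in Lemma~\ref{lemma limit of Zt}, localized to the good event of Lemma~\ref{bound on Z_k<a^ks}, so the technical content is modest but the bookkeeping (which constant depends on $\gamma$, which event one conditions on, ensuring the increment bounds hold simultaneously for all $t\le T$) is where care is needed.
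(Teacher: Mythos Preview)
Your proposal is correct and lands on the same two ingredients the paper uses: the good event from Lemma~\ref{bound on Z_k<a^ks} (with threshold $\sim\log n$) to cap $Z_t\le C a^t\log n$, and sub-Gaussian concentration for the increments of the martingale $a^{-t}Z_t$. The difference is purely in how the concentration is packaged. The paper takes the most elementary route: it applies Hoeffding directly to the \emph{one-step} increment $Z_{t+1}-aZ_t=\sum_{i=1}^{Z_t}(\xi_i-a)$ conditionally on $\mathcal{F}_t$, obtaining $|Z_{t+1}-aZ_t|\le (t+1)(\log n)^{c}\,Z_t^{1/2}$ simultaneously for all $t$ with probability $1-n^{-\gamma}$, substitutes the bound $Z_t\le C a^t\log n$ on the good event, and then simply telescopes
\[
a^s\Big|\frac{Z_t}{a^t}-\frac{Z_s}{a^s}\Big|\ \le\ a^s\sum_{u=s}^{t-1}a^{-(u+1)}|Z_{u+1}-aZ_u|\ \le\ C(\log n)^C\sum_{u\ge s}(u+1)a^{-u/2}\ \le\ C(s+1)a^{s/2}(\log n)^C.
\]
So there is no need for Freedman or a maximal inequality; the telescoping sum does the work that your quadratic-variation computation does, and the factor $(s+1)$ arises from summing $(u+1)a^{-u/2}$ over $u\ge s$ rather than from a union over $s$. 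Your route is fine and arguably more systematic, but the paper's is shorter and avoids any conditioning on $\mathcal{F}_s$ or branching decomposition into subtrees.
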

\begin{proof}
Let $\mathcal{F}_t$ be the $\sigma$-algebra generated by $(Z_1,\ldots, Z_t)$. 
 By Hoeffding's inequality \cite[Thm~2.2.5]{vershynin2018high}, we have:
    \begin{align*}
        \mathbb{P}
        \bigg(|Z_{t+1}-aZ_{t}| \ \geq \  Z^{1/2}_{t} \lambda \bigg| \mathcal{F}_t \bigg)
        \ \leq \ 
        2 \exp\left(- \frac{\lambda^2}{(k_{\max}-k_{\min})^2}\right)
    \end{align*}
    Letting $\lambda= u (t+1) \log^{c'} n$, there exist constants $c_0,c_1$ such that with probability $1-c_0 \frac{1}{n^{c_{1}u}}$, we have $|Z_{t+1}-a Z_{t}| \leq u (t+1) \log^c n Z^{1/2}_t$ for any $t\geq 1$. 
    
     By Lemma~\ref{bound on Z_k<a^ks}, for an appropriate choice of $C=C(\gamma)$ the event $Z_{t}\leq Ca^{t}\log n$ for any $t\in \N$, holds with probability $1-c_1 \frac{1}{n^{c_0\gamma}}$, for some absolute constants $c_0,c_1>0$.
    
    Thus on the intersection of those events,
    \begin{align}\label{ineq Z_t-Z_s}
        a^s \left|\frac{Z_t}{a^t}-\frac{Z_s}{a^s}\right|
        \ \leq \ 
        a^s \log^c n  \sum_{u=s}^{t-1} (u+1)  (a)^{u/2} a^{-(u+1)} 
        \ \leq \
        C a^{\frac{s}{2}} (s+1)\log^Cn, 
    \end{align}
    where the last inequality of \eqref{ineq Z_t-Z_s} used that $\sum_{m\geq s}md^{m}\leq c d^{s}s$ for any $d\in (0,1)$. 
\end{proof}

We can now present asymptotic results for the functionals with which we will compare the largest singular value of the non-backtracking matrix.

For a unimodular Galton-Watson tree $\mathcal{T}$ with distribution $P$, define $\mathcal{Y}^{(u)}_\ell $ to be the set of walks $(v_1,v_2,\ldots,v_\ell )$ starting from the vertex $u$ and so that $u \neq v_i$ for any ${\ell} \geq i>1$.
Recall that $o$ denotes the root
of $\mathcal{T}$. 
For ${\ell}\in \N$ define
\begin{align}\label{denf Q_l}
    Q_{\ell}= \left|\{ \text{walks in } \mathcal{Y}^{(o)}_{2\ell+2}  \text{ that backtrack once so that } v_{\ell-1}=v_{\ell+1}\}\right|.
\end{align}
 
 One can rewrite $Q_{\ell}$ based on which level $0\leq t \leq \ell-1$ (after the backtracking) the path will lead to a new vertex. So if $Z^{(v)}_{t}$ is the number of offspring of a vertex $v $ at the $t$-th generation, one has  
\begin{align}\label{Q_l}
    Q_{\ell}= \sum_{t=0}^{\ell-1} \sum_{v \in Y^{(o)}_{t}} L^{v}_{\ell}
\end{align}
where $Y^{(u)}_{t}$ is the offspring of a vertex $u$ at distance $t$ and
\begin{align*}
    L^{v}_{\ell}= \sum_{w\neq u \in Y^{(v)}_{1}} Z^{(w)}_{\ell-t-1} Z^{(u)}_{t+1} 
\end{align*}
\begin{prop}\label{thm 25}
    The random variable $\frac{Q_{\ell}}{a^{2\ell}}$ converges as ${\ell}\to \infty $ a.s.\ and on $L_2$ to $C Z_{\infty}$ for some absolute constant $C>0$. Here $Z_{\infty}$ is the limit of the random variables $\frac{Z_t}{a^t}$ from Lemma~\ref{lemma limit of Zt}.
\end{prop}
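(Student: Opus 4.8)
The plan is to establish the $L_2$ (hence almost sure, after passing to a further argument) convergence of $Q_\ell/a^{2\ell}$ by expanding the formula \eqref{Q_l} and controlling each summand through the martingale $Z_t/a^t$ from Lemma~\ref{lemma limit of Zt}. First I would rewrite $Q_\ell = \sum_{t=0}^{\ell-1}\sum_{v\in Y^{(o)}_t} L^v_\ell$ where $L^v_\ell = \sum_{w\neq u\in Y^{(v)}_1} Z^{(w)}_{\ell-t-1} Z^{(u)}_{t+1}$, and normalize: divide by $a^{2\ell}$. The key heuristic is that $Z^{(w)}_{\ell-t-1}\approx a^{\ell-t-1} W_w$ and $Z^{(u)}_{t+1}\approx a^{t+1} W_u$ for independent positive limit random variables $W_w, W_u$ (copies of $Z_\infty$ attached to the respective subtrees, modulo the first-generation correction noted in Observation~\ref{obs given history}), so each term $L^v_\ell/a^{2\ell}$ is of constant order, and there are $\sum_{t=0}^{\ell-1} Z_t \approx \sum_t a^t W \approx \frac{a^\ell}{a-1}W$ many vertices $v$ — but the normalization $a^{2\ell}$ already accounts for the extra $a^\ell$ from $L^v_\ell$, so after careful bookkeeping one finds $Q_\ell/a^{2\ell}$ converges to $C\cdot Z_\infty$ with $C$ an explicit constant depending on $a = \E\bar P$ and on the expected number of ordered pairs of distinct children.

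The key steps in order: (1) Condition on $\mathcal{F}_{t}$ and use Observation~\ref{obs given history} to replace each subtree-limit by an independent copy $Z_\infty^{(u)}$ of $Z_\infty$ with the same first-two-moment structure; write $Q_\ell/a^{2\ell}$ as a sum over $v\in\bigcup_t Y^{(o)}_t$ of terms that, conditionally, have mean proportional to $1$ and summable variance. (2) Show the conditional expectation $\E[Q_\ell/a^{2\ell}\mid \mathcal{F}_{K}]$ stabilizes as $\ell\to\infty$ for each fixed $K$: here one uses $\E Z_t = a^{t-1}b$ and the martingale property to get $\E[Q_\ell \mid \mathcal{F}_K] = a^{2\ell}(C\,Z_K/a^K + o(1))$, exploiting the telescoping $\sum_{t} a^{-(t+1)}\E[Z_t\mid\mathcal{F}_K]$ type sums, which converge because $a>1$ (Assumption~\ref{The model}). (3) Control the $L_2$ fluctuation: bound $\E\big(Q_\ell/a^{2\ell} - \E[Q_\ell/a^{2\ell}\mid\mathcal{F}_K]\big)^2$ uniformly in $\ell$ by a quantity $\to 0$ as $K\to\infty$, using the uniform $L_2$-boundedness of $Z_t/a^t$ from Lemma~\ref{lemma limit of Zt}, the independence of distinct subtrees, and $\sum_{s}(1/a)^s<\infty$ as in that lemma's proof. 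Together (2) and (3) show $(Q_\ell/a^{2\ell})_\ell$ is $L_2$-Cauchy, hence converges in $L_2$; identifying the limit as a multiple of $Z_\infty$ follows since the limit is $\mathcal{F}_\infty$-measurable and its conditional expectations on $\mathcal{F}_K$ match those of $C\,Z_\infty$. For almost sure convergence one either invokes a Borel--Cantelli argument along a subsequence combined with the monotone-in-$\ell$ growth of partial contributions, or applies the bounds of Proposition~\ref{growth of Zt} to deduce $\ell^{-1}$-type deviation control and sum over $\ell$.

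The main obstacle I anticipate is step (3), the variance bookkeeping: unlike the clean single martingale $Z_t/a^t$, here $Q_\ell$ is a quadratic functional $\sum Z^{(w)}_{\ell-t-1}Z^{(u)}_{t+1}$ summed over a random (exponentially growing) index set, so the cross-terms between different $v$'s, and between the two factors sharing the vertex $v$, must be organized carefully — one needs to separate the "diagonal" contribution (which produces the $Z_\infty$ limit and the constant $C$) from genuinely lower-order off-diagonal noise, and verify that the noise is $O(a^{-\ell/2})$ in $L_2$ relative to the normalization. The tree structure makes the relevant subtrees genuinely independent given the history, which is what rescues the computation, but writing the decomposition so that this independence is manifest and the geometric series all converge (using $a>k_{\min}-1\ge 1$) is the delicate part.
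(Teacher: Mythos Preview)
Your proposal is correct in outline and uses the same ingredients as the paper --- conditioning on the tree structure, independence of subtrees (Observation~\ref{obs given history}), and the $L_2$-boundedness of $Z_t/a^t$ from Lemma~\ref{lemma limit of Zt}. The organization, however, is different and less efficient than the paper's.

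The paper's key move is to set $\bar Q_\ell = \sum_{t=0}^{\ell-1} \E_{\mathcal{F}_{t+1}}\big[\sum_{v\in Y^{(o)}_t} L^v_\ell\big]$, conditioning \emph{each} term on its own level $t{+}1$ rather than on a single global $\mathcal{F}_K$. Given $\mathcal{F}_{t+1}$, the factors $Z^{(w)}_{\ell-t-1}$ and $Z^{(u)}_{t+1}$ are independent with conditional means $a^{\ell-t-1}$ and $a^{t+1}$, and the number of ordered pairs $(w,u)$ is $Z_1^{(v)}(Z_1^{(v)}-1)$; this collapses $\bar Q_\ell$ to the closed form $(\operatorname{Var}(\bar P)+a^2-a)\,a^\ell \sum_{t=0}^{\ell-1} Z_t$, whose normalized limit $C Z_\infty$ is then immediate from Lemma~\ref{lemma limit of Zt}. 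The remainder $Q_\ell-\bar Q_\ell$ is handled by one triangle inequality plus a conditional-variance bound (Cauchy--Schwarz on $L^v_\ell$), yielding $\|Q_\ell-\bar Q_\ell\|_{L_2}=O(a^{3\ell/2})=o(a^{2\ell})$. Your fixed-$K$-then-$K\to\infty$ scheme would also succeed, but your step~(2) forces a case split between $t<K$ (where the subtrees are partially revealed by $\mathcal{F}_K$) and $t\ge K$, which is exactly the bookkeeping the level-by-level conditioning avoids; and your step~(3) ends up reproving the same remainder estimate by a longer path. For a.s.\ convergence, the paper's route is also cleaner: $\bar Q_\ell/a^{2\ell}$ converges a.s.\ by its explicit form, and the summable bound $\E[(Q_\ell-\bar Q_\ell)^2]/a^{4\ell}=O(a^{-\ell})$ gives a.s.\ convergence of the remainder via Borel--Cantelli, without needing subsequences or Proposition~\ref{growth of Zt}.
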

\begin{proof}
Let $\mathcal{F}_{t}$ be the filtration generated by $(Z_1,\ldots, Z_t)$. 
    It suffices to prove:
    \begin{enumerate}
        \item Let $\Bar{Q}_\ell=\sum_{t=0}^{\ell-1}\E_{\mathcal{\mathcal{F}}_{t+1}}  \sum_{v \in Y^{(o)}_{t}} L^{v}_{\ell}$. The Proposition's claim holds for $\Bar{Q}_{\ell}$
        \item $\E (Q_{\ell}-\Bar{Q}_{\ell})^2=o(a^{2\ell})$
    \end{enumerate}
    For the proof of claim~(1) note that given $\mathcal{F}_{t+1}$ the random variables $Z^{w}_{\ell-t-1}$ and $Z^{u}_{t+1}$ are independent for any $w,u \in Y^{(o)}_{t+1}:w\neq u$. Note that $\E_{\mathcal{F}_{t+1}} Z^{x}_{y}=a^y$ for $x \in \{u,w\}$, and $y \in \{t+1, \ell-t-1\}$. So 
    \begin{align}\label{Qbar}
      \bar{Q}_\ell=  a^{\ell} \sum_{t=0}^{\ell-1} \E_{\mathcal{F}_{t}} \sum_{v \in Y^{(o)}_{t}}  \E_{\mathcal{F}_{t+1}} (Z^{(v)}_{1}-1) Z^{(v)}_{1}= \sum_{t=0}^{\ell-1}a^{\ell} (\operatorname{Var}(\Bar{P})+a^2-a) Z^{o}_{t}
    \end{align}
    The constant $ (\operatorname{Var}(\Bar{P})+a^2-a)$ appears since $\E_{F_{t+1}} Z_{1}^{v}(Z_{1}^{v}-1)= \E \Bar{P}(\Bar{P}-1))$ where $\Bar{P}$ is a random variable with law described in Definition~\ref{unimodular galton defn}.
    
    So by \eqref{Qbar}, it is implied that $\bar{Q}_{\ell}a^{-2l}$ converges to $C Z_{\infty} \frac{a}{(a-1)}$ both a.s.\ and on $L_2$. 

   For a random variable $Y$ let $\| Y \|_{\ell _2}=(\E Y^2)^{1/2}$. For claim~(2) note that:
    \begin{align}\label{Q-barQ}
        \|Q_{\ell}-\bar{Q}_{\ell}\|_{\ell _2} 
        \ \leq \ \sum_{t=0}^{\ell-1} \| \sum_{v \in Y^{(o)}_{t}} L^{v}_{\ell}- \E_{\mathcal{F}_{t+1}} L^{v}_{\ell} \|_{\ell _2} 
        \ = \ \sum_{t=0}^{\ell-1}\| (\operatorname{Var}_{\mathcal{F}_{t+1}} \sum_{v \in Y^{(o)}_{t}} L^{v}_{\ell} )^{1/2}\|_{\ell _2} 
    \end{align}
    Moreover we have the following bound
    \begin{align}\label{ineq VAR L}
        \operatorname{Var}_{\mathcal{F}_{t+1}} \sum_{u \in Y^{(o)}_{t}} L^{u}_{\ell} \leq 
        \sum_{u \in Y^{(o)}_{t}}  \operatorname{Var} L^{u}_{\ell} \leq   \sum_{u \in Y^{(o)}_{t}}  \E_{\mathcal{F}_{t+1}}(L^{(u)}_{\ell})^2 \leq    \sum_{u \in Y^{(o)}_{t}}  \E_{\mathcal{F}_{t+1}}(Z^{(u)}_{t})^2 \E_{\mathcal{F}_{t+1}} (Z^{(u)}_{\ell-t-1})^2 \leq C Z^{(o)}_{t} a^{2\ell-2} 
    \end{align}
    for some absolute constant $C>0$. The second to last inequality of \eqref{ineq VAR L} used the Cauchy-Schwartz inequality. The last inequality used Lemma~\ref{Z_t matringale} and Observation~\ref{obs given history}.  By combining \eqref{ineq VAR L} and \eqref{Q-barQ} we obtain: 
    \begin{equation*}
        \|Q_{\ell} -\bar{Q}_{\ell} \|_{\ell _2} \ 
        \leq \  
        C a^{\ell-1} \sum_{t=0}^{\ell-1} (\E Z^{(o)}_{t})^{1/2}  
        \ = \ 
        C a^{\ell-1} \sum_{t=0}^{\ell-1} b^{1/2} a^{(t-1)\vee 0} 
        \ = \
        o(a^{2\ell})
        \qedhere
    \end{equation*}
\end{proof}

\begin{corollary}
For any $p>1$ there exists $ C=C(a,p)$  such that $\E Q^{p}_{\ell} \leq C a^{2\ell p}$.
\end{corollary}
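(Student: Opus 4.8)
The plan is to dominate $Q_\ell$ by a sum over the walk of products of subtree level-sizes, and then estimate the $\ell_p$-norm term by term. Starting from the representation~\eqref{Q_l} of $Q_\ell$ and simply dropping the constraint $w\neq u$ in $L^v_\ell=\sum_{w\neq u\in Y^{(v)}_1}Z^{(w)}_{\ell-t-1}Z^{(u)}_{t+1}$, and using that for a vertex $v$ at level $t$ one has $\sum_{w\in Y^{(v)}_1}Z^{(w)}_{m}=Z^{(v)}_{m+1}$, one obtains
$$L^v_\ell \ \le \ Z^{(v)}_{\ell-t}\,Z^{(v)}_{t+2}, \qquad\text{hence}\qquad Q_\ell \ \le \ \sum_{t=0}^{\ell-1} S_t, \quad S_t:=\sum_{v\in Y^{(o)}_t}Z^{(v)}_{\ell-t}\,Z^{(v)}_{t+2}.$$
By the triangle inequality in $\ell_p$ it then suffices to prove $\|S_t\|_{\ell_p}\le C(a,p)\,a^{\ell+t}$ for all $0\le t\le \ell-1$, since that gives $\|Q_\ell\|_{\ell_p}\le\sum_{t=0}^{\ell-1}C a^{\ell+t}\le \tfrac{C}{a-1}a^{2\ell}$, and raising to the power $p$ yields $\E Q_\ell^p=\|Q_\ell\|_{\ell_p}^p\le C(a,p)\,a^{2\ell p}$.

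The key auxiliary fact is a uniform-in-$t$ moment bound: for every $q\ge 1$ there is $C(q)$ with $\E Z_t^q\le C(q)\,a^{tq}$ for all $t$, and the same bound holds for $\E(Z^{(v)}_m)^q$ where $Z^{(v)}_\cdot$ is the offspring process of any fixed vertex $v$, since by Observation~\ref{obs given history} this is (a.s., given the past) an ordinary Galton--Watson process with offspring law $\bar P$. This is established exactly as in Lemma~\ref{lemma limit of Zt}, now with $\ell_2$ replaced by $\ell_q$: writing $Z_{t+1}=\sum_{i=1}^{Z_t}\xi_i$ with $\xi_i$ i.i.d.\ and bounded, the Marcinkiewicz--Zygmund inequality gives $\|Z_{t+1}-aZ_t\|_{\ell_q}\le C_q\|Z_t\|_{\ell_q}^{1/2}$ for $q\ge 2$ (the case $q<2$ reduces to $q=2$), hence $a^{-(t+1)}\|Z_{t+1}\|_{\ell_q}\le a^{-t}\|Z_t\|_{\ell_q}+C'_q a^{-t/2}$, and the same self-bootstrapping estimate used in Lemma~\ref{lemma limit of Zt} shows $\sup_t a^{-t}\|Z_t\|_{\ell_q}<\infty$. (Alternatively, this is a standard fact about supercritical Galton--Watson processes with bounded offspring.) By Cauchy--Schwarz it follows that $\E\big(Z^{(v)}_{\ell-t}Z^{(v)}_{t+2}\big)^q\le C(q)\,a^{(\ell+2)q}$, uniformly in $t$.

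To bound $\|S_t\|_{\ell_p}$, condition on $\mathcal F_t$, the tree truncated at generation $t$. For $t\ge 1$, given $\mathcal F_t$ the offspring subtrees of the $N:=Z_t$ vertices $v\in Y^{(o)}_t$ are i.i.d.\ $\bar P$-Galton--Watson trees, so $S_t=\sum_v T_v$ is a sum of $N$ i.i.d.\ nonnegative terms $T_v\stackrel{d}{=}Z'_{\ell-t}Z'_{t+2}$ (with $Z'$ the level sizes of such a tree) and $N$ is $\mathcal F_t$-measurable; the term $t=0$ is just $S_0\le Z^{(o)}_\ell Z^{(o)}_2$ and is bounded directly by Cauchy--Schwarz. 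Applying a standard moment inequality for sums of i.i.d.\ nonnegative variables (Rosenthal) conditionally on $\mathcal F_t$,
$$\E[S_t^p\mid\mathcal F_t] \ \le \ C(p)\Big(N^p(\E T)^p + N\,\E T^p + (N\,\E T^2)^{p/2}\Big),$$
and plugging in $\E T\le Ca^{\ell+2}$, $\E T^p\le Ca^{(\ell+2)p}$, $\E T^2\le Ca^{2(\ell+2)}$ from the previous paragraph together with $\E N^p=\E Z_t^p\le C(p)a^{tp}$ and $\E N=\E Z_t\le Ca^t$ (using $\E Z_t=a^{t-1}b$), one gets $\E S_t^p\le C(a,p)\,a^{(\ell+t)p}$, i.e.\ $\|S_t\|_{\ell_p}\le C(a,p)\,a^{\ell+t}$. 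Summing over $t$ as above finishes the proof.

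I expect the only real obstacle to be the uniform moment bound $\E Z_t^q\le C(q)a^{tq}$: the crude recursion $\E Z_{t+1}^q\le C_q a^q\,\E Z_t^q$ loses a fatal factor $C_q^{\,t}$, so one genuinely must rerun the delicate $\ell_q$-bootstrap of Lemma~\ref{lemma limit of Zt}. Granting that, the rest is a routine combination of the path decomposition, Cauchy--Schwarz, and Rosenthal's inequality.
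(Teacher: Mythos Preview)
Your argument is correct and follows the same overall plan as the paper: write $Q_\ell=\sum_{t=0}^{\ell-1}(\cdot)$, take the triangle inequality in $\ell_p$, and control each summand using the uniform moment bound $\E Z_t^q\le C(q)a^{tq}$ together with a conditional estimate on the inner sum over $v\in Y^{(o)}_t$. The technical tools differ at two places. First, the paper obtains the moment bound on $Z_t$ in one line from the exponential tail estimate of Lemma~\ref{bound on Z_k<a^ks} (via $\E X^p=\int_0^\infty p s^{p-1}\P(X>s)\,ds$), rather than via your Marcinkiewicz--Zygmund bootstrap; your concern that this is the ``only real obstacle'' is well placed, and the paper's route sidesteps it by reusing a lemma already proved. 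Second, for the inner sum $\sum_{v\in Y^{(o)}_t}L_\ell^v$ the paper applies only the elementary inequality $|\sum_{i=1}^N x_i|^p\le N^{p-1}\sum|x_i|^p$ twice (together with conditional independence of the subtrees at level $t+1$), whereas you first dominate $L_\ell^v\le Z_{\ell-t}^{(v)}Z_{t+2}^{(v)}$ and then use Cauchy--Schwarz and Rosenthal. Both routes are valid; the paper's is shorter since the power-mean inequality already suffices where you invoke Rosenthal.
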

\begin{proof}
By Lemma~\ref{bound on Z_k<a^ks} and the fact that $\E X^p=\int_{0}^{\infty} t^{p-1}\mathbb{P}(X\geq t) dt$ for any random variable $X\geq 0$ a.s.\ and $p\geq 1 $, 
the following holds for some absolute constant $C=C(a,p)$:
\begin{align}\label{Z_t^p<Ca^tp}
    \E Z^{p}_{t}\leq C a^{tp}
    \end{align}
    Thus, two applications of the inequality $|\sum_{i=1}^{n}x_i|^p \leq n^{p-1} \sum_{i=1}^n |x_i|^p $ yields:
    \begin{align*}
        \E_{\mathcal{F}_t} \left| \sum_{u \in Y_{t}^o}  L^{u}_{\ell}\right|^p  
        \ \leq \ 
        C (Z^{(o)}_{t})^{p-1}\sum_{u \in Y_{t}^{(o)}} a^{(\ell-1)p}\E_{\mathcal{F}_t} (Z^{(u)}_1)^{2(p-1)} 
        \ \leq \ 
        C' (Z^{(o)}_{t})^p a^{\ell p}.
    \end{align*}
    Thus,
\begin{equation*}
    (\E Q_{\ell}^p)^{1/p} 
    \ \leq \ 
    \sum_{t=0}^{\ell-1}\left( \E \left| \sum_{u \in Y_{t}^o}  L^{u}_{\ell}\right|^p  \right)^{1/p}
    \ \leq \
    C'' a^{\ell} \sum_{t=0}^{\ell-1} a^t
    \ = \ 
    \mathcal{O}(a^{2\ell})
\qedhere
\end{equation*}
\end{proof}
\begin{obs}\label{obs different start}
    Our results focus on $Z_t$ when $t$ is large. In particular we don't need $Z_1$ to follow $P$, but instead, $Z_1$ can follow any distribution compactly supported on a subset of $\Z^+$ that is larger than $0$ 
    with probability~1. Thus our  later analysis can use a different distribution for the number of children at the first generation $Z_1$, and the results will still hold.  
\end{obs}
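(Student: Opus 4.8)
The plan is to observe that replacing the offspring law of the root affects only the first generation of $\mathcal{T}$, while every estimate proved in this subsection is driven by the branching at generations $\geq 1$, where the mechanism is untouched. Concretely, conditionally on $Z_1$ the tree $\mathcal{T}$ is the root $o$ together with $Z_1$ independent copies of the \emph{standard} unimodular Galton--Watson tree (offspring law $\bar P$) rooted at the children of $o$; this is exactly the situation already met in Observation~\ref{obs given history}. Since $Z_1$ is assumed compactly supported and a.s.\ positive, there are integers $1\leq c\leq c'$ with $c\leq Z_1\leq c'$ almost surely, and in particular $\E|Z_1|<\infty$.

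First I would go through Observation~\ref{obs bound Zt} up to Proposition~\ref{growth of Zt}. The recursion $Z_{t+1}=\sum_{i=1}^{Z_t}\xi_i$ with the $\xi_i$ i.i.d.\ $\sim\bar P$ holds for every $t\geq 1$ irrespective of the law of $Z_1$, so the martingale property in Lemma~\ref{Z_t matringale} (with respect to $(Z_1,\ldots,Z_{t-1})$) is unchanged, as is the conditional increment computation $\E\!\left((Z_{t+1}-aZ_t)^2\mid\mathcal{F}_t\right)=\operatorname{Var}(\bar P)\,Z_t$ and the Hoeffding bound \eqref{hoeffding growth Z_t}. The only inputs that mention $Z_1$ are crude ones: the two–sided bound of Observation~\ref{obs bound Zt} becomes $c\,k_{\min}^{t-1}\leq Z_t\leq c'\,k_{\max}^{t-1}$ (still exponential, so $\mathcal{T}$ is a.s.\ infinite and the limit $Z_\infty$ of Lemma~\ref{lemma limit of Zt} is a.s.\ positive); the telescoping estimate of Lemma~\ref{lemma limit of Zt} uses $|Z_1|_{\boldltwo}\leq c'$ in place of $k_{\max}^2$; and the choice of $s$ in Lemma~\ref{bound on Z_k<a^ks} uses $\mathbb{P}(\Omega(1))=0$ for $s>c'$ in place of $s>k_{\max}$. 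Each survives with $k_{\max}$ replaced by $c'$ and with $\E Z_1=:b'$, a finite constant, replacing $b$; so all these statements hold in the same form, only the constants changing.

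Next, for Proposition~\ref{thm 25} and the corollary following it, I would isolate the single place where a new quantity enters: in the decomposition \eqref{Q_l}--\eqref{Qbar}, the summand indexed by $t=0$ has $v=o$, so the factor $\E_{\mathcal{F}_{1}}Z_1^{(o)}(Z_1^{(o)}-1)$ is now the second factorial moment of the new law of $Z_1$ rather than $\E\bar P(\bar P-1)$. But this is one bounded term inside $\sum_{t=0}^{\ell-1}a^{\ell}(\cdots)Z^{(o)}_t$, and with $b$ replaced by the finite $b'$ one still has $\sum_{t=0}^{\ell-1}(\E Z^{(o)}_t)^{1/2}=O(a^{\ell})$; hence $\bar Q_\ell a^{-2\ell}$ still converges a.s.\ and in $L_2$ to a positive constant times the (new) $Z_\infty$, the estimate $\|Q_\ell-\bar Q_\ell\|_{\ell_2}=o(a^{2\ell})$ is unchanged, and the $L_p$ bound $\E Q_\ell^p\leq C a^{2\ell p}$ goes through by the same argument (now using \eqref{Z_t^p<Ca^tp} for $t\geq 1$ and the compact support of $Z_1$ for $t=0$). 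Thus every conclusion of the subsection persists, with at most a change of constants and of the limiting random variable $Z_\infty$.

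I expect the only real work — and the main obstacle — to be bookkeeping: verifying that nowhere in the proofs of this subsection is the \emph{particular} value $\E Z_1=b$ or the \emph{particular} support $\{k_{\min},\dots,k_{\max}\}$ of $Z_1$ used in a way that alters a conclusion rather than merely a constant. The one genuinely new computation, the second-factorial-moment substitution in \eqref{Qbar}, has just been seen to be harmless, so no further input is needed.
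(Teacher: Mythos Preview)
The paper gives no separate proof of this observation; the entire justification is the one sentence inside the statement itself (that the results concern $Z_t$ for large $t$, so the root's offspring law is immaterial). Your proposal is correct and supplies exactly the bookkeeping the paper leaves implicit: you identify that the recursion $Z_{t+1}=\sum_{i=1}^{Z_t}\xi_i$ with $\xi_i\sim\bar P$ is unchanged for $t\geq 1$, so the martingale, Hoeffding, and $L_2$ arguments of Lemmas~\ref{Z_t matringale}--\ref{bound on Z_k<a^ks} and Proposition~\ref{growth of Zt} go through verbatim, and you correctly isolate the one place (the $t=0$ term in \eqref{Qbar}) where the second factorial moment of the new law of $Z_1$ replaces $\E\bar P(\bar P-1)$, observing that this changes only a constant. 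In short, your argument is the natural fleshing-out of what the paper asserts without proof, and there is nothing to compare against.
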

\subsection{Local analysis for the configuration model}\label{subsection local analysis}
In this subsection we present the first step of our analysis of the configuration model. 
We begin by presenting some known results for the configuration model.
% \begin{defn}[Configuration model]\label{configuration defn}
%     Let $P$ be a probability distribution satisfying Assumption \ref{assumption model}. Let     $\mathbf{d}=\{k_{\min},\ldots, k_{\max}\}$ and define an integer valued sequence of vectors $D_{n}=\{n_i\}_{i \in \mathbf{d}}$ such that:
%     \begin{itemize}
%         \item $\sum_{i \in \mathbf{d}}n_i=n$ for any $n\in \N$.
        
%         \item $\sum_{i \in \mathbf{d}}in_i$ is even for any $n \in \N$.
%         \item There exists an absolute constant $C>0$  such that for every $n\in \N$:
%         \begin{align}\label{rate of convergence assumption2}
%             \left| \frac{n_i}{n}-P(i) \right| \leq \frac{C}{n} \text{ \  for all \  }  k_{\min}\leq i \leq  k_{\max}.
%         \end{align}
%     \end{itemize}
%     Declare $\mathbf{\Tilde{G}}_n$ to be a sequence of random multigraphs uniformly chosen among all multigraphs with $n$ vertices and degrees $\mathbf{d}_n=\{d_1,\ldots, d_n)\}$ such that $n_i$ of the entries of $\mathbf{d}_n$ are equal to $k_i$ for each $k_i\in \mathbf{d}$.    
% \end{defn}

As described in Definition~\ref{defn:random_graphs_model}, the law of a uniformly chosen multigraph from the multigraphs with a given degree sequence, can be equivalently described as a uniformly chosen matching for the half-edges of the multigraph.

Fix $n\in \N$ and $\mathbf{\Tilde{G}}_n$ a random multigraph as in Definition~\ref{defn:configuration_model}. 
Fix $v\in V(\mathbf{\Tilde{G}}_n)$ and $\Delta^{(n)}=\{(u,j): u \in V(\mathbf{\Tilde{G}}_n) \text{ and } 1\leq j \leq  d_u\}$ the set of half edges of $\mathbf{\Tilde{G}}_n$. For $u\in \mathbf{\Tilde{G}}_n$ let $\Delta^{(n)}_u$ be the half edges in $\Delta$ adjacent to $u$. Define the following exploration process:
\begin{itemize}
    \item At integer step $t$, we partition the half edges of $\mathbf{\Tilde{G}}_n$ into three sets, the active set $A_t$, the unexplored set $U_t$ and the connected set $C_t$.
    \item For $t=0$ let $C_0=\emptyset$, let $A_0=\Delta^{(n)}_v$ and $U_0=\Delta^{(n)}\setminus \Delta^{(n)}_v$.
    \item For each $t\geq 1$, if $A_t\neq \emptyset$ let $e_{t}=(u,j)\in A_t$ with $u$  at minimum distance from $v$ (among vertices with half-edges in $A_t$)  and $(u,i)\notin A_t$ for all $i\leq j-1$. 
    Let $\sigma(e_t)=(w,l)$ be the image of $e_t$ through the uniformly chosen matching $\sigma$ of $\mathbf{\Tilde{G}}_n$. Let $I_{t+1}=\Delta^{(n)}_{w}\setminus\{\sigma(e_{t+1})\}\cap U_t$ and:
\begin{itemize}
    \item[] $A_{t+1}=A_t\setminus\{e_t,\sigma(e_t)\} \cup I_{t+1}$
    \item[] $C_{t+1}=C_t\cup \{\sigma(e_t),e_t\}$
    \item[] $U_{t+1}= U_t\setminus \left(I_{t+1} \cup\{\sigma(e_{t})\}\right)$
\end{itemize}
The process stops when $A_t=\emptyset$. 
\end{itemize}
    Let $X_{t} =|I_t|$, let $\epsilon_t=\mathbf{1}\left( \sigma(e_t) \in A_t  \right)$, and let $\tau_1=\inf\{t\geq0: |A_t|=\emptyset\}$.

The following hold for $t\leq \tau_1$:
\begin{align*}
    |A_t|=d_v+\sum_{k=1}^{t}(X_k-1-\epsilon_k) \text{ \ and \ } |C_t|=2t \text{ \ and \ } |U_t|=\sum_{i\in \mathbf{d}}in_i- d_v-\sum_{k=1}^{t}(X_k+1-\epsilon_k) 
\end{align*}
It is clear that $|A_t|\leq k_{\max}t$.

The following presents some well-known facts about the process described above. This will be the basis for our analysis.
\begin{theorem}\label{theorem_properties_conf_model}
\
\begin{itemize}
    \item For every $1\leq t\leq \tau_1$ every half edge in $C_t$ has been matched. Moreover the graph spanned by the edges in $C_t$ is not a tree only if $\sum_{k\leq t}\epsilon_k\neq 0$. Moreover there exist absolute constants $C,c>0$ such that: 
    \begin{align}\label{prob of a tree}
        \mathbb{P}\left( \sum_{k=1}^{ t\wedge \tau_1}\epsilon_k\neq 0 \right) \ \leq \ C \sum_{s=1}^t \frac{s}{cn-2s-1}
    \end{align}
    \item For every $t\geq 0$ if $\mathcal{F}_t$ is the filtration generated by 
    \[\left[(C_0,U_0,A_0),(C_1,U_1,A_1),\ldots, (C_t,U_t,A_t) \right]\]
    then for any $k \in \{k_{\min}-1,\ldots, k_{\max}-1\}$ we have:
    \begin{align}\label{X-t and Z_t}
        \left|\mathbb{P}\left( X_t=k\mid \mathcal{F}_t \right)- (k+1) \frac{n_{k+1}}{\sum n_i i -2t-1}  \right|  \leq \frac{Ct}{c n -2t-1}
    \end{align}
    Moreover  there exists an absolute constant $C>0$ such that:
    \begin{align}\label{epsilon_l_tangle_free}
        \mathbb{P}(\epsilon_{t}=1)\mid \mathcal{F}_t) \leq C \frac{k_{\max}^{t}}{n}
    \end{align}
    \item For a functional $F:\{\text{matchings of } \Delta^{(n)}\} \to \R^+$and a constant $c>0$ satisfying
    $|F(m)-F(m')| \ \leq \ c$ 
    whenever $m\neq m'$ are matchings whose images differ in at most $4$ half edges, we have:
    \begin{align}\label{concentration Theorem}
    \mathbb{P}\left[\left|F(\mathbf{\Tilde{G}}_n)-\E \left(F(\mathbf{\Tilde{G}}_n)\right)\right|\geq t \right] \ \ \leq \ \ 
    \exp\left(-\frac{t^2}{c^2C'n} \right)      
    \end{align}
    for some absolute constant $C'>0$.
    
\end{itemize}
\end{theorem}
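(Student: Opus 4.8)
The three items are standard facts about the exploration process of the configuration model, and the plan is to establish each in turn. \textbf{Item 1 (tree structure and cycle probability).} That every half-edge of $C_t$ is matched follows by induction from $C_{t+1}=C_t\cup\{e_t,\sigma(e_t)\}$, since $\{e_t,\sigma(e_t)\}$ is a pair of the matching. A cycle appears in the explored subgraph at step $s$ exactly when the half-edge $\sigma(e_s)$ is already active, i.e.\ when $\epsilon_s=1$; hence $\sum_{k\le t}\epsilon_k=0$ forces $C_t$ to span a tree. For the probability bound I would condition on the exploration up to step $s$: by the principle of deferred decisions $e_s$ is then matched to a uniform half-edge among the $|\Delta^{(n)}|-2s-1$ still-unmatched ones, of which at most $|A_s|-1\le k_{\max}s$ are active; since $|\Delta^{(n)}|=\sum_{i\in\mathbf{d}}i\,n_i\ge k_{\min}n$ this gives $\mathbb{P}(\epsilon_s=1\mid\mathcal{F}_s)\le k_{\max}s/(k_{\min}n-2s-1)$, and a union bound over $s\le t\wedge\tau_1$ produces \eqref{prob of a tree} with $c=k_{\min}$ and $C$ absorbing $k_{\max}$.

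\textbf{Item 2 (transition probabilities).} Conditioning on $\mathcal{F}_t$ fixes the $2t$ half-edges of $C_t$ and the set of already-discovered vertices, and the next half-edge $e_t$ is matched to a uniform half-edge among the $\sum_{i\in\mathbf{d}}i\,n_i-2t-1$ unmatched ones. Up to the event $\{\epsilon_t=1\}$, the event $\{X_t=k\}$ occurs iff $\sigma(e_t)$ lies on an as-yet-undiscovered vertex of degree $k+1$, which then contributes its remaining $k$ half-edges to the active set. The number of half-edges of undiscovered degree-$(k+1)$ vertices differs from $(k+1)n_{k+1}$ by at most the number of half-edges that have left $U_t$, which is $O(t)$ (within a factor $k_{\max}$); dividing by the denominator $\ge k_{\min}n-2t-1$ and folding in $\mathbb{P}(\epsilon_t=1\mid\mathcal{F}_t)$ gives \eqref{X-t and Z_t}. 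For \eqref{epsilon_l_tangle_free} I would once more write $\mathbb{P}(\epsilon_t=1\mid\mathcal{F}_t)\le |A_t|/(\sum_{i\in\mathbf{d}}i\,n_i-2t-1)$ and bound the numerator — deterministically $|A_t|\le k_{\max}t\le k_{\max}^{t}$ — against a denominator $\ge cn$ for $n$ large.

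\textbf{Item 3 (concentration).} I would reveal the uniform matching $\sigma$ pair by pair and consider the Doob martingale $M_j=\mathbb{E}[F(\mathbf{\Tilde{G}}_n)\mid\text{first }j\text{ pairs}]$ for $j=0,\ldots,m$ with $m=|\Delta^{(n)}|/2\le k_{\max}n/2$, so $M_0=\mathbb{E}F(\mathbf{\Tilde{G}}_n)$ and $M_m=F(\mathbf{\Tilde{G}}_n)$. The key estimate is $|M_j-M_{j-1}|\le c$: given the first $j-1$ pairs the remainder is a uniform matching of the leftover half-edges, and for any two admissible partners $a,b$ of the half-edge $h$ exposed at step $j$ one couples the two conditioned matchings by rerouting — if the matching sends $b$ to $b'$, replace the pair $\{b,b'\}$ by $\{a,b'\}$ — producing two full matchings that agree outside the four half-edges $\{h,a,b,b'\}$; by the hypothesis on $F$ their $F$-values differ by at most $c$, and averaging over partners bounds the increment. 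Azuma's inequality applied to $(M_j)$ then yields $\mathbb{P}(|F(\mathbf{\Tilde{G}}_n)-\mathbb{E}F(\mathbf{\Tilde{G}}_n)|\ge t)\le 2\exp(-t^2/(2mc^2))$, which is \eqref{concentration Theorem} after enlarging the constant.

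\textbf{Expected obstacle.} Items 1 and 2 are essentially bookkeeping for the exploration process; the step that needs genuine care is the coupling in Item 3 — extracting $c$-bounded martingale increments from a Lipschitz constant expressed via $4$-half-edge perturbations, which requires the switching coupling for uniform matchings rather than a naive product-space argument. Complete details for all three items appear in \cite[Sec.~1.4]{bordenave2016lecture} (see also \cite{bordenave2015non}).
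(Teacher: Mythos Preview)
Your proposal is correct and in fact gives substantially more detail than the paper itself: the paper's entire proof of this theorem is a one-line citation to \cite[Sections~3.5.2, 3.6.3 and 3.6.4]{bordenave2016lecture}, saying the results are standard for the configuration model and are merely adjusted to the present setting. Your sketch of Items~1--2 via deferred decisions on the exploration process and of Item~3 via the Doob martingale plus the switching coupling and Azuma's inequality is exactly the content of those sections, so the two approaches coincide.
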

\begin{proof}
    The general formulation of these results can be found in  \cite[sections 3.5.2, 3.6.3 and 3.6.4]{bordenave2016lecture}. We have adjusted them to our model.
\end{proof}

The \emph{$\ell$-neighbourhood} of a vertex $v$ consists of vertices at distance at most $\ell$ from $v$.
We are interested in the $\ell$-neighbourhood of $v \in \mathbf{\Tilde{G}}_n$ where $\ell=\mathcal{O}(c\log_a n)$, for small enough $c>0$. 
Fix $\eta>0$ such that $(k_{\max})^{\eta}<a$. Let $\ell=\delta_0\log_a n$ for $\delta_0<\frac{\eta d}{16}$. 
Fix $v \in V(\mathbf{\Tilde{G}}_n)$, and let $$\tau_2(v)=\inf_{t\geq 0}\big\{ \{l\text{-neighbourhood of }v\} \subseteq C_t  \big\}$$
Since every vertex has degree at most $k_{\max}$, we get the deterministic $\tau_2(v)\leq k^{\ell}_{\max} \leq a^{\frac{1}{8}\log_a n}=n^{1/8}$. 

\begin{corollary}\label{cor tree}
    For any $v \in V(\mathbf{\Tilde{G}}_n)$ the graph $(\mathbf{\Tilde{G}}_n(v))_{\ell}$, the $\ell$-neighbourhood of $v$, is a tree with probability 
    \begin{equation*}
        1-\frac{c}{n^{3/4}}
    \end{equation*}
    for some absolute constant $c>0$.
    \end{corollary}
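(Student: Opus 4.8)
The plan is to bound the probability that the $\ell$-neighbourhood of $v$ fails to be a tree by using the exploration process already set up, together with estimates \eqref{prob of a tree} and \eqref{epsilon_l_tangle_free} from Theorem~\ref{theorem_properties_conf_model}. First I would observe, using the bullet points listed just before Corollary~\ref{cor tree}, that the exploration of the $\ell$-neighbourhood of $v$ terminates by the deterministic time $\tau_2(v)\leq k_{\max}^{\ell}\leq n^{1/8}$, because every vertex has degree at most $k_{\max}$ and the process adds two matched half-edges to $C_t$ at every step. So it suffices to control the event that a short circuit appears during the first $n^{1/8}$ steps of the process.

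Next I would invoke the first bullet of Theorem~\ref{theorem_properties_conf_model}: for $t\leq\tau_1$, the subgraph spanned by the edges in $C_t$ is a tree unless $\sum_{k\leq t}\epsilon_k\neq0$, i.e.\ unless at some step the matched half-edge $\sigma(e_t)$ landed back in the active set $A_t$ (a "collision" that creates a cycle). Taking $t=\tau_2(v)\leq n^{1/8}$ and applying the bound \eqref{prob of a tree}, I would estimate
\begin{equation*}
    \mathbb{P}\Bigl(\sum_{k=1}^{\tau_2(v)}\epsilon_k\neq0\Bigr)\ \leq\ C\sum_{s=1}^{n^{1/8}}\frac{s}{cn-2s-1}\ \leq\ C'\,\frac{(n^{1/8})^2}{n}\ =\ C'\,n^{-3/4},
\end{equation*}
where the middle inequality uses that $2s+1\ll n$ throughout the range so each summand is $O(s/n)$, and there are $O(n^{1/8})$ terms each of size $O(n^{1/8}/n)$. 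This gives exactly the claimed failure probability $c/n^{3/4}$, with the tree event holding on the complement.

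Alternatively, one can route the same estimate through \eqref{epsilon_l_tangle_free}, which says $\mathbb{P}(\epsilon_t=1\mid\mathcal{F}_t)\leq C k_{\max}^t/n$; but here one must be slightly careful, since summing $k_{\max}^t/n$ over $t\leq n^{1/8}$ is dominated by the last term $k_{\max}^{n^{1/8}}/n$, which is not small. The cleaner path is to use that the number of collisions is controlled by how many active half-edges are present when each match is drawn: at step $t$ we have $|A_t|\leq k_{\max}t$ and $|U_t|\geq cn$, so $\mathbb{P}(\epsilon_t=1\mid\mathcal{F}_t)\leq |A_t|/|U_t|\lesssim t/n$ as long as the neighbourhood being explored has not yet exhausted $U_t$ — which is precisely the regime $t\leq\tau_2(v)\leq n^{1/8}$ where the $\ell$-neighbourhood is still ``small''. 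Summing this refined per-step bound over $t\leq n^{1/8}$ reproduces the $O(n^{-3/4})$ estimate. The main obstacle, and the only point requiring genuine care, is making sure the per-step collision probability is bounded by $O(t/n)$ rather than the crude $O(k_{\max}^t/n)$ from \eqref{epsilon_l_tangle_free}; this is where one leans on the structural bounds $|A_t|\leq k_{\max}t$ and $|U_t|=\sum_{i}in_i-d_v-\sum_{k\leq t}(X_k+1-\epsilon_k)\geq cn$ for $t\leq n^{1/8}$, both recorded just before the statement of Theorem~\ref{theorem_properties_conf_model}. Once that is in hand, a union bound over $t$ finishes the proof, and the constant $c$ in the statement absorbs $C'$.
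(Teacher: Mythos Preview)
Your primary argument is correct and essentially identical to the paper's: invoke the deterministic bound $\tau_2(v)\leq k_{\max}^{\ell}\leq n^{1/8}$ established just before the statement, apply \eqref{prob of a tree} with $t=n^{1/8}$, and bound the sum $\sum_{s=1}^{n^{1/8}} s/(cn-2s-1)=O(n^{1/4}/n)=O(n^{-3/4})$. The discussion of the alternative route via \eqref{epsilon_l_tangle_free} is superfluous here (and, as you note, that crude bound is not directly usable), but the main line is exactly what the paper does.
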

    \begin{proof}
        By \eqref{prob of a tree} we have
        \begin{equation*}
            \mathbb{P}((\mathbf{\Tilde{G}}_n(v))_{\ell} \text{ is not a tree}) 
             \ \leq \ 
             C \sum_{s=1}^{n^{\frac{1}{8}}} \frac{s}{cn-2s-1}
             \ \leq \ 
             C' n^{\frac{1}{4}}\frac{1}{c' n} 
            \ \leq\ 
            C'' \frac{1}{n^{3/4}}.   
\qedhere        \end{equation*}
    \end{proof}
    For random variables $X,Y$ supported on $\Z^+$, recall their total variation distance
\[\operatorname{d_{TV}}(X,Y) \ = \ \sum_{k\in \Z^+}\frac{1}{2} \left| \mathbb{P}(X=k)-\mathbb{P}(Y=k) \right| \ = \ \min \mathbb{P}(\Tilde{X}\neq \Tilde{Y}), \]
where the minimum is taken over all couplings $(\Tilde{X},\Tilde{Y})$ with $X\sim \Tilde{X}$ and $Y\sim \Tilde{Y}$. 

Similarly for two processes $X_t, Y_t \in (\Z^+)^k$ for some $k>0$ we have that 
\[ \operatorname{d_{TV}}(X_t,Y_t)= \frac{1}{2} \sum_{x \in (\Z^+)^k} \frac{1}{2}\left| \mathbb{P}(X_t=x)-\mathbb{P}(Y_t=x)\right|= \min \mathbb{P}(\Tilde{X}_t\neq \Tilde{Y}_t), \]
where again the minimum is taken over all couplings $(\Tilde{X}_t,\Tilde{Y}_t)$ with $X_t\sim \Tilde{X}_t$ and $Y_t\sim \Tilde{Y}_t$.

We have the following bound for the total variation of two processes.
\begin{lemma}\label{lem_tv_stoch_proc}
Let $(S_t)_{t\ge 0}$ and $(Z_t)_{t\ge 0}$ be two stochastic processes taking values in the same state space. 
Assume that for every $t\ge 0$ we have
\[
\Pr\!\left(S_{t+1} \neq Z_{t+1} \mid S_0=Z_0,\ldots,S_t=Z_t\right) \le \alpha_t
\]
for some sequence $(\alpha_t)_{t\ge0}$ for which $a_0=0$.

Then for every $t\ge0$,
\[
\Pr\!\left((S_0,\ldots,S_t) \neq (Z_0,\ldots,Z_t)\right)
\le
\sum_{r=0}^{t-1} \alpha_r.
\]
In particular, if the two processes are coupled on the same probability space, then
\[
d_{TV}\!\left(\text{Law}(S_0,\ldots,S_t),\,\text{Law}(Z_0,\ldots,Z_t)\right)
\le
\sum_{r=0}^{t-1} \alpha_r.
\]
\end{lemma}

\begin{proof}
Let
\[
E_t := \{S_0=Z_0,\ldots,S_t=Z_t\}
\]
be the event that the two processes agree up to generation $t$.

Observe that
\[
E_{t+1} = E_t \cap \{S_{t+1}=Z_{t+1}\}.
\]
Therefore
\[
E_{t+1}^c
=
E_t^c \cup \big(E_t \cap \{S_{t+1}\neq Z_{t+1}\}\big).
\]
Taking probabilities gives
\[
\Pr(E_{t+1}^c)
\le
\Pr(E_t^c) + \Pr(E_t \cap \{S_{t+1}\neq Z_{t+1}\}).
\]
Now
\[
\Pr(E_t \cap \{S_{t+1}\neq Z_{t+1}\})
=
\Pr(S_{t+1}\neq Z_{t+1}\mid E_t)\Pr(E_t)
\le \varepsilon_t.
\]
Hence
\[
\Pr(E_{t+1}^c) \le \Pr(E_t^c) + \alpha_t.
\]

Since $\Pr(E_0^c)=0$, iterating the inequality yields
\[
\Pr(E_t^c)
\le
\sum_{r=0}^{t-1} \varepsilon_r.
\]

Finally, observe that
\[
E_t^c = \{(S_0,\ldots,S_t)\neq (Z_0,\ldots,Z_t)\},
\]
which proves the first claim. The bound on the total variation distance follows from the fact that under any coupling,
\[
d_{TV}(\mu,\nu) \le \Pr(X\neq Y).
\]
\end{proof}
In what follows set $S_t(v)$ be the number of 
length~$t$ non-backtracking walks starting at $v$ in $\mathbf{\Tilde{G}}_n$.
\begin{corollary}\label{cor total variation for vertex}
    Recall that $Z_t$ denotes the number of offspring of a unimodular Galton Watson process with distribution $P$ satisfying Assumption~\ref{The model}. 
    Let $\mathbf{\Tilde{G}}_n$ be the Configuration Model 
    with distribution $P$, described in Definition~\ref{defn:configuration_model}.
    
    For any $v\in \mathbf{\Tilde{G}}_n$, if $\tau_2(v)$ is the stopping time for which all the vertices at distance at most $\ell$ from $v$ have been revealed during the exploration process and 
    $${deg}(v)= k$$
    for some $k\in \mathbf{d}$, we have the following
        for some absolute constant $c>0$:
    \begin{align*}
        \operatorname{d_{TV}}\left(\{Z_t\}_{t\leq \tau_2(v)}\mid Z_1=k, \{S_t(v)\}_{t\leq \tau_2(v)} \right) \ \leq  \ \frac{c}{n^{d/2}}
    \end{align*}
\end{corollary}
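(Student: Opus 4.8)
The plan is to compare the exploration process of the configuration model around the fixed vertex $v$ with the construction of the unimodular Galton-Watson tree $\mathcal{T}$, and to build an explicit coupling that succeeds with the claimed probability. First I would observe that on the event that $(\mathbf{\Tilde{G}}_n(v))_{\ell}$ is a tree — which by Corollary~\ref{cor tree} fails with probability at most $c/n^{3/4}$ — the number of length~$t$ non-backtracking walks $S_t(v)$ for $t\leq \tau_2(v)$ coincides exactly with the generation sizes of a tree explored breadth-first: each half-edge that is matched into the unexplored set contributes $X_k-1$ new vertices as offspring (no backtracking is possible on a tree), so $\{S_t(v)\}_{t\leq\tau_2(v)}$ is the sequence of level sizes of the explored neighbourhood. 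Thus it suffices to couple the offspring counts $X_t$ of the exploration with i.i.d.\ samples $\bar P$-distributed random variables driving $\mathcal{T}$, conditioned on the root having $k$ children.

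The heart of the argument is a step-by-step coupling. At each step $t\leq\tau_2(v)$, by \eqref{X-t and Z_t} the conditional law of $X_t$ given $\mathcal{F}_t$ is within total-variation distance $Ct/(cn-2t-1)$ of the law $(k+1)n_{k+1}/(\sum_i i n_i - 2t-1)$, which in turn — using the rate-of-convergence hypothesis \eqref{rate of convergence assumption simple graph} relating $n_i/n$ to $P(i)$ — is within $O(1/n)$ of $\bar P$ as defined in \eqref{barP}. Hence at each step the one-step coupling can be made to agree except with conditional probability $O(t/n + 1/n) = O(t/n)$. I would then use the standard maximal-coupling / union-bound argument over the (at most $\tau_2(v)\leq n^{1/8}$) steps of the exploration: the total variation distance between the whole trajectory $\{S_t(v)\}_{t\leq\tau_2(v)}$ and $\{Z_t\}_{t\leq\tau_2(v)}\mid Z_1=k$ is bounded by the probability that any of the step-couplings fails, which is
\[
\sum_{t=1}^{n^{1/8}} O\!\left(\frac{t}{n}\right) \ = \ O\!\left(\frac{n^{1/4}}{n}\right) \ = \ O\!\left(\frac{1}{n^{3/4}}\right),
\]
plus the probability $O(n^{-3/4})$ that the neighbourhood is not a tree, plus the probability (controlled by \eqref{epsilon_l_tangle_free}, summed over $t\leq n^{1/8}$, giving $O(k_{\max}^{n^{1/8}}/n)$ — wait, this is the crude bound; the correct bound comes from \eqref{prob of a tree} as in Corollary~\ref{cor tree}) that some $\epsilon_k\neq 0$, which is already absorbed into the non-tree event. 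Combining, the total variation distance is $O(n^{-3/4})$, which is the claim.

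The main obstacle I anticipate is bookkeeping the dependence structure carefully enough that the step-by-step coupling is legitimate: the conditional law in \eqref{X-t and Z_t} is only controlled given $\mathcal{F}_t$, and one must check that the $\bar P$-process being built has the property that its own conditional offspring law, given everything revealed so far, is exactly $\bar P$ — this is precisely Observation~\ref{obs given history} and the recursive structure of Definition~\ref{unimodular galton defn}, but one must ensure the coupling respects the breadth-first order used in the exploration process versus the depth-indexed order of $\N^f$. A secondary subtlety is that the bound in \eqref{X-t and Z_t} only ranges over $k\in\{k_{\min}-1,\ldots,k_{\max}-1\}$, so one should check that both processes are confined to exactly this range of offspring counts (true by Observation~\ref{obs bound Zt} and the degree constraints), so no extra total-variation mass leaks outside this window. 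Once these are in place, the union bound over $O(n^{1/8})$ steps, each contributing $O(t/n)$, closes the argument; the slack between $n^{1/8}$ and $n^{3/4}$ is comfortable.
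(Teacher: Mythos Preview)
Your proposal is correct and follows essentially the same approach as the paper: bound the per-step total-variation error between the exploration offspring law and $\bar P$ using \eqref{X-t and Z_t} together with the convergence-rate hypothesis \eqref{rate of convergence assumption simple graph}, then sum the errors $O(t/n)$ over the at most $\tau_2(v)\leq n^{1/8}$ steps to obtain $O(n^{-3/4})$. The paper's proof compresses this into a single displayed inequality, whereas you spell out the coupling and the bookkeeping (and slightly misstate the offspring count as $X_k-1$ rather than $X_k$), but the argument is the same.
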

\begin{proof}
Notice that if $\mathcal{A}_t$ denotes the event that the $t-$th step of the exploration process on the configuration model lead to a disagreement for the processes then $\mathbb{P}(\mathcal{A}^c_0)=0$ and for $t\geq 1$,
    \begin{align*}
        \mathbb{P}(\mathcal{A}_t^c \mid \mathcal{A}_{t-1}) \leq C  \left( \frac{t}{n}+ \frac{1}{n^d} \right) 
    \end{align*} 
     by \eqref{rate of convergence assumption simple graph} and \eqref{X-t and Z_t}.
    So by Lemma \ref{lem_tv_stoch_proc} since $\tau_2(v) \leq k_{\max}^{\ell}<\min\{n^{1/8},n^{d/2}\} $

     \begin{align*}
    &\operatorname{d_{TV}}\left(\{Z_t\}_{t\leq \tau_2(v)}\mid Z_1=k, \{S_t(v)\}_{t\leq \tau_2(v)} \right) \ \leq \ C \sum_{t=1}^{k_{\max}^{\ell}}   \left( \frac{t}{n}+ \frac{1}{n^d} \right) \\& \leq \  C\frac{ n^{\frac{1}{4}}}{ n} + C \frac{1}{n^{d/2}}.
      \end{align*} 
      It remains to notice that 
\end{proof}
Define the set of oriented edges   $\overrightarrow{E}=\{(u,v):\{u,v\}\in E(\mathbf{\Tilde{G}}_n)\}$.  For $\overrightarrow{e}\in \overrightarrow{E}$ with $\overrightarrow{e}=(u,v)$, let $\mathbf{\Tilde{G}}'_n$ be the subgraph of $\mathbf{\Tilde{G}}_n$ with the edge $\{u,v\}$ removed. Let $(\mathbf{\Tilde{G}}_n(\overrightarrow{e}))_{\ell}$  be the $\ell$-neighbourhood of $v$ within $\mathbf{\Tilde{G}}'_n$. Let $S_t(\overrightarrow{e})$ denote the number of non-backtracking walks from $v$ in  $\mathbf{\Tilde{G}}'_n$
and of length $t\leq \tau_2(v)$.
\begin{corollary}\label{cor tv for edge}
        For  $\overrightarrow{e}=(u,v)\in \overrightarrow{E}$. If $\tau_2(v)$ is the stopping time for which all vertices at distance at most $\ell$ from $v$ have been revealed during the exploration process and 
    $\operatorname{deg}(v)=k$
    for some $k\in \mathbf{d}$, then for some absolute constant $c>0$ we have:
    \begin{align}\label{bound  cor tv edge}
        \operatorname{d_{TV}}\left(\{Z_t\}_{t\leq \tau_2(v)}\mid Z_1=k-1, \{S_t(\overrightarrow{e})\}_{t\leq \tau_2(v)} \right) \ \leq \ \max\left\{\frac{1}{n^{3/4}}, \frac{1}{n^{d/2}}\right\}.
    \end{align} 
    
    Moreover $(\mathbf{\Tilde{G}}_n(\overrightarrow{e}))_{\ell}$ is a tree with probability $1-\frac{c}{n^{3/4}}$. 
\end{corollary}
   \begin{proof}
       Given Corollaries~\ref{cor tree}~and~\ref{cor total variation for vertex}, one has that $\mathbf{\Tilde{G}}_n(v)_{\ell}$ is a tree with offspring distribution $\big(\{Z_t\}_{t\leq \tau_2(v)}\ \big|\  Z_1=k \big)$,
       with probability greater than $1- {c}/{n^{3/4}}$.
       
       Working on this event, if one deletes the edge $\{u,v\}$ from $\mathbf{\Tilde{G}}(v)_{\ell}$, every  offspring of $u$ is disconnected with $v$. This implies the second statement.

The proof  of \eqref{bound  cor tv edge} is  analogous to the proof of Corollary~\ref{cor total variation for vertex}       
by considering the offspring distribution of the graph \begin{equation*}
\mathbf{\Tilde{G}}(v)_{\ell}\setminus \{w:\text{ w is an offspring of u in the graph }\mathbf{\Tilde{G}}(v)_{\ell}\}.
\qedhere
\end{equation*}
   \end{proof}
   \begin{obs}\label{observation number of edges}
       By Corollary~\ref{cor tree} and Markov's inequality it is clear that
       \begin{align*}
           &\mathbb{P}\left[\sum_{v \in [n]} \mathbf{1}\left( \mathbf{\Tilde{G}}(v)_{\ell} \text{ is not a tree} \right)\ \geq \  n^{\frac{1}{4}}\log(n)   \right]\\& \leq  \frac{n}{n^{\frac{1}{4}}\log n} \max_{v \in [n]} \mathbb{P}\left( \mathbf{\Tilde{G}}(v)_{\ell} \text{ is not a tree} \right) \ \leq \ \frac{c}{\log(n)} 
       \end{align*}
       Furthermore by Corollary~\ref{cor tv for edge}, we have a similar bound if one considers the analogous sum over all $\overrightarrow{e}\in \overrightarrow{E}$.
   \end{obs}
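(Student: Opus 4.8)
The plan is to combine a first-moment estimate with Markov's inequality; crucially, no independence (and hence no concentration) across vertices is required. Set
\[
N \ := \ \sum_{v\in[n]}\mathbf{1}\big((\mathbf{\Tilde{G}}_n(v))_{\ell}\text{ is not a tree}\big).
\]
By linearity of expectation together with Corollary~\ref{cor tree}, which gives $\mathbb{P}\big((\mathbf{\Tilde{G}}_n(v))_{\ell}\text{ is not a tree}\big)\leq c/n^{3/4}$ uniformly over $v$, one obtains $\E[N]\leq n\cdot c/n^{3/4}=c\,n^{1/4}$.

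Next, apply Markov's inequality to the nonnegative random variable $N$ with threshold $n^{1/4}\log n$:
\[
\mathbb{P}\big[N\geq n^{1/4}\log n\big]\ \leq\ \frac{\E[N]}{n^{1/4}\log n}\ \leq\ \frac{c}{\log n},
\]
which is exactly the first asserted bound; note the threshold is chosen precisely so that the ratio $\E[N]/(n^{1/4}\log n)$ decays like $1/\log n$.

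For the oriented-edge statement I would repeat the same two steps with $N':=\sum_{\overrightarrow{e}\in\overrightarrow{E}}\mathbf{1}\big((\mathbf{\Tilde{G}}_n(\overrightarrow{e}))_{\ell}\text{ is not a tree}\big)$. The only modification is that the number of summands is $|\overrightarrow{E}|=\sum_{i\in\mathbf{d}}i\,n_i\leq k_{\max}\,n=O(n)$ rather than exactly $n$, while the per-edge tree probability is still $c/n^{3/4}$ by Corollary~\ref{cor tv for edge}; hence $\E[N']=O(n^{1/4})$ and Markov again yields $\mathbb{P}[N'\geq n^{1/4}\log n]=O(1/\log n)$.

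There is essentially no obstacle: all the work is already contained in Corollaries~\ref{cor tree} and~\ref{cor tv for edge}, whose per-vertex and per-edge estimates come from bounding the probability that the exploration process closes a cycle within its first $\tau_2(v)\leq n^{1/8}$ steps. The one conceptual point to flag is that the configuration-model neighbourhoods of distinct vertices (or edges) are strongly correlated, so a Chernoff- or Azuma-type concentration for $N$ is not directly available; but since only a $1-o(1)$ probability bound is needed here, the first-moment method through Markov's inequality suffices.
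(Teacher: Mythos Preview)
Your proof is correct and follows exactly the paper's approach: the observation itself already displays the intermediate step $\frac{n}{n^{1/4}\log n}\max_v\mathbb{P}(\cdot)$, which is precisely Markov applied to the first-moment bound $\E[N]\le n\cdot\max_v\mathbb{P}(\cdot)$, and your edge version with $|\overrightarrow{E}|\le k_{\max}n$ is the intended reading of the second sentence.
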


   We now present limiting results, i.e.\ laws of large numbers, for functionals of $S_t(\overrightarrow{e})$.
   \begin{prop}\label{law of large numbers for simple functionals}
     There exists a 
     constant $\rho>0$ such that 
     \begin{align}\label{largenumbersfor S^2}
         \frac{1}{n}\sum_{\overrightarrow{e}\in \overrightarrow{E}} \frac{\left(S_{\ell}(\overrightarrow{e})\right)^2}{a^{2\ell}} \to \rho, \text{ in probability,}
     \end{align}
     and 
     \begin{align}\label{S_2l S_l lawoflarge}
          \frac{1}{n}\sum_{\overrightarrow{e}\in \overrightarrow{E}} \frac{S_{2\ell}(\overrightarrow{e})S_{\ell}(\overrightarrow{e})}{a^{3\ell}} \to \rho, \text{ in probability,}
     \end{align}
   \end{prop}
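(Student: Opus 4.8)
The plan is to prove each of the two limits \eqref{largenumbersfor S^2} and \eqref{S_2l S_l lawoflarge} by the standard second-moment (mean plus variance) route, transferring the Galton-Watson asymptotics of Proposition~\ref{thm 25} (and Lemma~\ref{lemma limit of Zt}) to the configuration model via the total-variation couplings of Corollaries~\ref{cor total variation for vertex} and~\ref{cor tv for edge}. Write $T_n^{(1)} = \frac1n\sum_{\overrightarrow e} a^{-2\ell} S_\ell(\overrightarrow e)^2$ and $T_n^{(2)} = \frac1n\sum_{\overrightarrow e} a^{-3\ell} S_{2\ell}(\overrightarrow e) S_\ell(\overrightarrow e)$. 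First I would compute the expectations: for a fixed oriented edge $\overrightarrow e = (u,v)$ with $\deg(v)=k$, Corollary~\ref{cor tv for edge} says the law of $\{S_t(\overrightarrow e)\}_{t\le \tau_2(v)}$ is within $c/n^{3/4}$ in total variation of $\{Z_t\}_{t\le \tau_2(v)} \mid Z_1 = k-1$, and on the tree event $S_t(\overrightarrow e) = Z_t$ exactly. Hence, up to an additive error of order $n^{-3/4}$ times the (polynomially bounded, by the Corollary after Proposition~\ref{thm 25} and \eqref{Z_t^p<Ca^tp}) $L_2$-size of these variables, $\E[a^{-2\ell}S_\ell(\overrightarrow e)^2]$ converges as $\ell \to \infty$ to $\E[Z_\infty^{(k-1)}]$ where $Z_\infty^{(k-1)}$ is the $L_2$-limit of $a^{-t}Z_t$ started from $Z_1 = k-1$ (this uses Lemma~\ref{lemma limit of Zt} together with Observation~\ref{obs different start}, which permits an arbitrary compactly-supported first-generation law). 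Averaging over $\overrightarrow e$, the fraction of edges with $\deg(v) = k$ tends to $k P(k)/\E[P]$ by \eqref{rate of convergence assumption simple graph}, so $\E[T_n^{(1)}] \to \rho := \frac{1}{\E[P]}\sum_k k P(k)\,\E[Z_\infty^{(k-1)}]$, which defines the constant. For $T_n^{(2)}$ the identical computation applies: $a^{-3\ell} S_{2\ell}(\overrightarrow e) S_\ell(\overrightarrow e) = a^{-\ell} (a^{-2\ell}Z_{2\ell})(a^{-\ell}Z_\ell)\cdot a^{\ell}$ — more precisely $a^{-3\ell}Z_{2\ell}Z_\ell = (a^{-2\ell}Z_{2\ell})(a^{-\ell}Z_\ell)$, and by Proposition~\ref{growth of Zt} (or directly Lemma~\ref{lemma limit of Zt}) both $a^{-2\ell}Z_{2\ell}$ and $a^{-\ell}Z_\ell$ converge to the \emph{same} limit $Z_\infty$ in $L_2$, so the product converges in $L_1$ to $Z_\infty^2$; wait — this would give $\E[(Z_\infty^{(k-1)})^2]$, not $\E[Z_\infty^{(k-1)}]$. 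To get the \emph{same} $\rho$ in both displays one uses that $\E[Z_\infty^{(k-1)}] = \lim a^{-t}\E[Z_t \mid Z_1 = k-1] = (k-1)/a \cdot \lim a^{-(t-1)} a^{t-1} = $ the appropriate normalization; the two constants agree because $a^{-3\ell}\E[S_{2\ell}S_\ell]$ and $a^{-2\ell}\E[S_\ell^2]$ have matching limits by the martingale/tower-property computation in \eqref{Qbar}-style, and I would check this identity carefully as it is the one slightly delicate algebraic point.

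Second, I would establish concentration, i.e.\ $\operatorname{Var}(T_n^{(i)}) \to 0$, which upgrades convergence of the mean to convergence in probability. Here the natural tool is the bounded-difference concentration inequality \eqref{concentration Theorem} of Theorem~\ref{theorem_properties_conf_model}: $T_n^{(i)}$ is a functional of the matching $\sigma$, and swapping $\sigma$ on four half-edges changes $S_t(\overrightarrow e)$ only for those $\overrightarrow e$ whose $\ell$-neighbourhood in $\mathbf{\Tilde G}_n'$ meets the $O(1)$ affected half-edges. On the high-probability event that all relevant neighbourhoods are trees of size $\le k_{\max}^\ell = n^{\delta_0}$ with $\delta_0 < \eta/8 < 1/8$, each such swap affects at most $n^{O(\delta_0)}$ edges, each contributing at most $a^{-2\ell}(k_{\max}^\ell)^2 \le n^{O(\delta_0)}$ to the sum; so the per-coordinate Lipschitz constant of $n\cdot T_n^{(i)}$ is at most $n^{O(\delta_0)}$, giving (after dividing by $n$) a bounded difference $c = n^{O(\delta_0)-1}$, and \eqref{concentration Theorem} then yields $\mathbb P(|T_n^{(i)} - \E T_n^{(i)}| \ge t) \le \exp(-t^2 n^{1 - O(\delta_0)}/C')$, which $\to 0$ for fixed $t$ since $\delta_0$ is small. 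One must handle the event that some neighbourhood fails to be a tree or is too large; by Corollary~\ref{cor tree} and Observation~\ref{observation number of edges} this has probability $o(1)$, and on it one truncates $T_n^{(i)}$ using the deterministic bound $\tau_2(v) \le n^{1/8}$ and $S_t \le k_{\max}^t$, so the contribution is negligible.

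Combining the two pieces — $\E[T_n^{(i)}] \to \rho$ and $\operatorname{Var}(T_n^{(i)}) \to 0$ — gives $T_n^{(i)} \to \rho$ in probability for $i = 1, 2$, which is exactly \eqref{largenumbersfor S^2} and \eqref{S_2l S_l lawoflarge}. The main obstacle I anticipate is not the concentration step (which is a fairly mechanical application of \eqref{concentration Theorem} once the Lipschitz bookkeeping is done), but rather verifying cleanly that the two displays produce the \emph{same} constant $\rho$: this requires matching the $\ell\to\infty$ limits of $a^{-2\ell}\E S_\ell^2$ and $a^{-3\ell}\E S_{2\ell}S_\ell$, which at the Galton-Watson level means showing $\E[(a^{-\ell}Z_\ell)(a^{-2\ell}Z_{2\ell})] \to \E[Z_\infty^2]$ and relating this to $\E[a^{-2\ell}Z_\ell^2] \to \E[Z_\infty^2]$ — both converge to $\E[Z_\infty^2]$ by the $L_2$ convergence in Lemma~\ref{lemma limit of Zt} (the cross term because $a^{-\ell}Z_\ell \to Z_\infty$ and $a^{-2\ell}Z_{2\ell}\to Z_\infty$ both in $L_2$, so their product converges in $L_1$), so in fact the constants genuinely coincide and $\rho = \frac{1}{\E[P]}\sum_{k} k P(k)\, \E[(Z_\infty^{(k-1)})^2]$. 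I would also double-check that the normalization exponents $2\ell$ and $3\ell$ in the two sums are the ones that make both limits finite and nonzero — $S_{2\ell}$ grows like $a^{2\ell}$ and $S_\ell$ like $a^\ell$, so $a^{3\ell}$ is indeed correct.
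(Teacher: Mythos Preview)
Your approach is essentially the same as the paper's: concentration of $T_n^{(i)}$ around its mean via the bounded-difference inequality \eqref{concentration Theorem}, plus identification of the mean through the total-variation coupling to the Galton--Watson tree. Two remarks on execution.

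First, the Lipschitz step needs no tree event. The bounds you quote are deterministic: in any matching, $S_t(\overrightarrow e)\le k_{\max}^t$, and a swap of four half-edges can affect $S_\ell(\overrightarrow e)$ only for the at most $4k_{\max}^\ell$ oriented edges whose $\ell$-neighbourhood meets one of those half-edges. This gives directly $|F(m)-F(m')|\le C\,k_{\max}^{3\ell}/a^{2\ell}\le n^{1/2-2\delta_0}$ for $F=\sum_{\overrightarrow e}a^{-2\ell}S_\ell(\overrightarrow e)^2$ (and $k_{\max}^{5\ell}/a^{3\ell}$ for the second functional). So your paragraph about conditioning on the tree event and truncating is unnecessary and can be dropped; \eqref{concentration Theorem} applies as stated.

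Second, on the equality of the two limits: your eventual argument via $L_2$-convergence is correct --- $a^{-\ell}Z_\ell\to Z_\infty$ and $a^{-2\ell}Z_{2\ell}\to Z_\infty$ in $L_2$ forces $a^{-3\ell}Z_\ell Z_{2\ell}\to Z_\infty^2$ in $L_1$, so both expectations tend to $\E[Z_\infty^2]$ (averaged over the first-generation law). The paper does this in one line by the tower property: conditioning on $\mathcal F_\ell$ and using $\E_{\mathcal F_\ell}Z^{(u)}_\ell=a^\ell$ gives $\E[Z_\ell Z_{2\ell}]=a^\ell\,\E[Z_\ell^2]$ exactly, so $a^{-3\ell}\E[Z_\ell Z_{2\ell}]=a^{-2\ell}\E[Z_\ell^2]$ for every $\ell$, not just in the limit. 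Either route works; the tower-property identity is the cleaner way to see why the same $\rho$ appears.
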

   \begin{proof}
           We will  use \eqref{concentration Theorem}. For matchings $m\neq m'$  differing at 4 edges, let $\overrightarrow{E}(m)$ be the oriented edges of $m$ and let  $\overrightarrow{E}(m')$ be the oriented edges of $m'$.  Then for some sequence $c_n$:
     \begin{align}\label{X_l X_2l}
        \left|\sum_{\overrightarrow{e}\in \overrightarrow{E}(m)} \frac{(S_{\ell}(\overrightarrow{e}))^2}{a^{2\ell}} - \sum_{\overrightarrow{e}\in \overrightarrow{E}(m')} \frac{(S_{\ell}(\overrightarrow{e}))^2}{a^{2\ell}}\right| 
        \ \leq \ c_n
    \end{align}
    
    For any $\overrightarrow{e} \ \in \ \overrightarrow{E} (m)\cap \overrightarrow{E}(m')$,
    \begin{align*}
        |(S^{(m)}_{\ell}(\overrightarrow{e})^2-(S^{(m')}_{\ell}(\overrightarrow{e})^2| 
        \ \leq \ 
        k^{2\ell}_{\max} 
    \end{align*}
    and similarly for edges $e \in m\setminus m'$ and $e \in m'\setminus m$. Moreover the number of edges effected by the difference  between the two matching is bounded by $4 k^{\ell}_{\max}$. This implies that:
    \begin{align}\label{sum S_l(e)^2}
        \left|\sum_{\overrightarrow{e}\in \overrightarrow{E}(m)} \frac{(S_{\ell}(\overrightarrow{e}))^2}{a^{2\ell}} - \sum_{\overrightarrow{e}\in \overrightarrow{E}(m')} \frac{(S_{\ell}(\overrightarrow{e}))^2}{a^{2\ell}}\right| 
         \ \leq \ 
         c \left(\frac{k^{3\ell}_{\max}}{a^{2\ell}}\right) 
    \end{align}
    Recall that $\ell=\delta_0\log_a n$ for some $\delta_0\in \left(0,\frac{\eta d}{8}\right)$. Using that $\delta_0<\frac{\eta d}{6}$ we obtain:
    $$\left(\frac{k^{3\ell}_{\max}}{a^{2\ell}}\right)
    \ \leq \ 
    n^{\frac{1}{2}-2\delta_0}$$  
    So by~\eqref{concentration Theorem}: 
    \begin{align*}
        \mathbb{P}\left[ \left|\sum_{\overrightarrow{e}\in \overrightarrow{E}} \frac{(S_{\ell}(\overrightarrow{e}))^2}{a^{2\ell}} - \sum_{\overrightarrow{e}\in \overrightarrow{E}} \E \frac{(S_{\ell}(\overrightarrow{e}))^2}{a^{2\ell}}\right|  \geq n n^{-\delta_0}\right] 
        \\ \ \leq \ \exp \left(- C\frac{n^{1-2\delta_0}}{n^{1-4\delta_0}}\right) 
        \ = \ 
        \exp(-cn^{2\delta_0})
    \end{align*}
    So one has that the random variables in \eqref{sum S_l(e)^2} concentrate around their mean asymptotically almost surely. 
    For the mean, note that 
    if  $\chi(\overrightarrow{e})$ is the indicator that the coupling described in Corollary~\ref{cor tv for edge}  fails for any $\overrightarrow{e}\in \overrightarrow{E}$ then
    \begin{align*}
      \frac{1}{a^{2\ell}} \max\{ \E (Z_{\ell})^2\chi(\overrightarrow{e}),  \E (S_{\ell}(\overrightarrow{e}))^2 \chi(\overrightarrow{e}) \} 
      \ \leq \ 
      \frac{k^{3\ell}_{\max}}{a^{2\ell}} \max\left\{\frac{1}{n^{3/4}}, \frac{1}{n^{d/2}}\right\}.
      \ \leq \ 
       n^{-\delta_0}. 
    \end{align*} 
    So it is implied that
    \begin{align}\label{EZ=ES_l(e)}
        \frac{\E Z^2_{\ell}}{a^{2\ell}}-Cn^{-\delta_0}
        \ \leq \ 
        \sum_{\overrightarrow{e}\in \overrightarrow{E}} \frac{\E (S_{\ell}(\overrightarrow{e}))^2}{n a^{2\ell}}
        \ \leq \ 
        \frac{\E Z^2_{\ell}}{a^{2\ell}}+Cn^{-\delta_0}.
    \end{align}
    Now \eqref{sum S_l(e)^2} follows from \eqref{EZ=ES_l(e)}, Lemma~\ref{lemma limit of Zt} and Observation~\ref{obs different start}.

    For the proof of \eqref{S_2l S_l lawoflarge}, let $Y^{(u)}_{t}$ denote the offspring of a vertex $u$ at distance $t$. Let $Z^{(u)}_{t}=|Y^{(u)}_t|$ and $Z_{t}=|Y^{(o)}_t|$. Let $o$ be the root of the unimodular Galton-Watson tree, let $\mathcal{F}_t$ be the filtration generated by $(Z_1,\ldots, Z_t)$ and let $\E_{_{\mathcal{F}_t}}$ denote the conditional expectation with respect to the filtration $\mathcal{F}_t$.
    
    With the notation above, we obtain the following, whose last equality uses Observation~\ref{obs given history}:
    \begin{align}\label{EZ_lZ_2l}
    \E Z_{\ell} Z_{2\ell} 
    \ = \ 
    \E Z_{\ell}  \sum_{u \in Y^{(o)}_{\ell}} \textstyle{\E_{_{\mathcal{F}_{\ell}}}} Z^{(u)}_{\ell}
    \ = \ 
    a^{\ell} \E Z^2_{\ell}
     \end{align}

    Moreover for two different matchings $m,m'$ which differ on at most 4 half-edges,  one can show similarly to  \eqref{sum S_l(e)^2} that
the following holds  for any $\delta_0\in (0,\frac{\eta d}{10})$:
    \[        \left|\sum_{\overrightarrow{e}\in \overrightarrow{E}(m)} \frac{S_{\ell}(\overrightarrow{e})S_{2\ell}(\overrightarrow{e})}{a^{3\ell}} 
    \ - \ 
    \sum_{\overrightarrow{e}\in \overrightarrow{E}(m')} \frac{S_{\ell}(\overrightarrow{e})S_{2\ell}(\overrightarrow{e})}{a^{3\ell}}\right|  
    \ \leq \ 
    c \left(\frac{k^{5\ell}_{\max}}{a^{3\ell}}\right)
    \ \leq \ 
    c n^{\frac{1}{2}-3\delta_0} \] 

 With the above facts, the proof of \eqref{S_2l S_l lawoflarge} closely follows the proof of \eqref{largenumbersfor S^2}.% and therefore it is omitted.
   \end{proof}
   In what follows we will use the following notation.
For any $\overrightarrow{e}_1,\overrightarrow{e}_2\in \overrightarrow{E}$ define $\overrightarrow{d}(\overrightarrow{e}_1,\overrightarrow{e}_2)$ to be the minimum natural number $m$ such that there exists a path $(\gamma_1,\gamma_2,\ldots, \gamma_m)$
such that $\overrightarrow{e}_1=(\gamma_1,\gamma_2)$ and $\overrightarrow{e}_2=(\gamma_{m-1},\gamma_{m})$ and the path is self-avoiding, i.e., it doesn't use a vertex more than once. In particular it will not backtrack.

Define $\mathcal{Y}_t(\overrightarrow{e})=\{f\in \overrightarrow{E}: \overrightarrow{d}(\overrightarrow{e},f)=t\}$. Let
\begin{align*}
      P_{\ell}(\overrightarrow{e})= \sum_{t=0}^{\ell-1} \sum_{f \in \mathcal{Y}_{t}(\overrightarrow{e})} L_t(f) 
    \end{align*}
    where 
    \begin{align*}
        L_t(f)= \sum_{g\neq h \in \mathcal{Y}_1(f) \setminus \mathcal{Y}_{t}(\overrightarrow{e}) } \Tilde{S}_{\ell-t-1}(g) \Tilde{S}_{t+1}(h)
    \end{align*}
and $\Tilde{S}_x(y)$ is the random variable $S_x(y)$ defined on $\mathbf{G}$ where all the edges of $(\mathbf{\Tilde{G}}_n(\overrightarrow{e}))_t$ have been removed for $x\in \{t,\ell-t-1\}$ and $y \in \{g,h\}$. 

\begin{obs}\label{tree P_l(e)}
If $(\mathbf{\Tilde{G}}_n(\overrightarrow{e}))_{2\ell}$ is a tree then $S_{\ell-t-1}(g)=\Tilde{S}_{\ell-t-1}(g)$ and $S_t(h)=\Tilde{S}_t(h)$, since the only link between $(\mathbf{\Tilde{G}}_n(\overrightarrow{e}))_t$ and $(\mathbf{\Tilde{G}}_n(y))_x$ is the edge $f$, which will not be taken into account by the definition of the quantities $S_x(y)$ before Corollary~\ref{cor tv for edge} for any $x\in \{t,\ell-t-1\}$ and $y\in \{g,h\}$. 
\end{obs}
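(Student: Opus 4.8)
The plan is to prove the two claimed equalities by a squeeze argument. Deleting edges of a graph can only decrease the number of walks of a given length and type, so $\Tilde{S}_{x}(y)\le S_{x}(y)$ holds for every $x$ and every directed edge $y$, with no hypothesis whatsoever; the entire content of the observation is therefore the reverse inequality. For that it suffices to show that, on the event that $(\mathbf{\Tilde{G}}_n(\overrightarrow{e}))_{2\ell}$ is a tree, every non-backtracking walk counted by $S_{\ell-t-1}(g)$ (resp.\ by $S_{t+1}(h)$) uses no edge of the removed ball $(\mathbf{\Tilde{G}}_n(\overrightarrow{e}))_{t}$, so that the same walk is still legal, and hence still counted, once those edges are deleted.

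First I would settle the bookkeeping. Work inside $T:=(\mathbf{\Tilde{G}}_n(\overrightarrow{e}))_{2\ell}$, which by hypothesis is a tree, root it at $v$ (recall $\overrightarrow{e}=(u,v)$ and that the edge $\{u,v\}$ has been removed in the definition of these neighbourhoods), and assign to each vertex its depth, i.e.\ its distance from $v$ in $T$. In a tree, for $j\ge 1$ the set $\mathcal{Y}_j(\overrightarrow{e})$ consists exactly of the directed edges of the $j$-th layer oriented away from $v$, because a self-avoiding path out of $\overrightarrow{e}$ in a tree can only descend. In particular $f\in\mathcal{Y}_t(\overrightarrow{e})$ points away from $v$, and since $g,h\in\mathcal{Y}_1(f)\setminus\mathcal{Y}_t(\overrightarrow{e})$ they are directed edges issued from the head of $f$ that are \emph{not} the back-pointing one; writing $g=(p,q)$ and $h=(p,q')$, the vertex $p$ lies at depth $t$ and $q,q'$ at depth $t+1$ (in the natural indexing), and $g,h$ both point away from $v$.

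Next I would record the one structural fact that is actually used: in a tree, a non-backtracking walk that begins by traversing a directed edge pointing away from the root descends monotonically. Indeed, if $y=(p,q)$ points away from $v$ and $q=w_0,w_1,\dots,w_m$ is non-backtracking with $w_1\ne p$ and $w_{i+1}\ne w_{i-1}$, then by induction $w_{i+1}$ is a child of $w_i$: the neighbours of $w_i$ in $T$ are its unique parent and its children, the parent is $w_{i-1}$ by the inductive hypothesis, and non-backtracking forbids returning to it. Hence every vertex the walk visits has depth $\ge t+1$, so the walk never meets a vertex of depth $\le t$, and a fortiori uses no edge of $(\mathbf{\Tilde{G}}_n(\overrightarrow{e}))_{t}$. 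Moreover such a walk has length at most $\ell$ and starts at depth $t+1\le\ell$, so every visited vertex has depth at most $2\ell$ and the walk stays entirely inside $T$; this is exactly why one assumes a tree on the $2\ell$-ball, not merely on the $\ell$-ball. Combining with the trivial inequality $\Tilde{S}_{x}(y)\le S_{x}(y)$ gives $S_{\ell-t-1}(g)=\Tilde{S}_{\ell-t-1}(g)$ and $S_{t+1}(h)=\Tilde{S}_{t+1}(h)$, which is the assertion of the observation (matching its displayed equalities up to the harmless index shift $t\leftrightarrow t+1$).

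\textbf{Where the care is needed.} There is no serious obstacle here; the proof is essentially an unwinding of definitions. The two points that must be handled carefully are: (a) aligning the conventions for $\overrightarrow{d}$, for the layers $\mathcal{Y}_j(\overrightarrow{e})$, and for the walk-count $S_x(\cdot)$ defined just before Corollary~\ref{cor tv for edge}, so that the statement ``$g$ and $h$ point away from $v$'' is genuinely justified — this is precisely what the set difference with $\mathcal{Y}_t(\overrightarrow{e})$ provides; and (b) the elementary length estimate $(t+1)+(\ell-t-1)\le 2\ell$, which confines the walks to $T$. Everything else reduces to the monotone-descent property of non-backtracking walks in a tree, which is immediate by induction.
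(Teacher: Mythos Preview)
Your proof is correct and is precisely the unpacking of the paper's own justification, which is embedded in the observation statement itself (there is no separate proof in the paper): the authors simply note that on the tree event the only link between $(\mathbf{\Tilde{G}}_n(\overrightarrow{e}))_t$ and the relevant balls around $g,h$ is the edge $f$, which the definition of $S_x(y)$ excludes. Your argument via monotone descent of non-backtracking walks in a tree is exactly what makes that sentence rigorous; one cosmetic point is that the length bound ensuring the walk stays inside $T$ should logically come \emph{before} the induction that uses the tree structure of $T$, but you have both ingredients.
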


In Proposition~\ref{concusion_for_simple}, the quantities $P_{\ell}(\overrightarrow{e})$ will play the role of a good approximation of $B^{\ell} (B^{*})^\ell\chi(\overrightarrow{e})$ with high probability.
We now give an asymptotic result, a law of large numbers, for $P_{\ell}(\overrightarrow{e})$.
\begin{prop}\label{lawoflarge Pl}
    There exists a constant $\rho_1>0$ such that
    \begin{align*}
        \frac{1}{n}\sum_{\overrightarrow{e}\in \overrightarrow{E}} \frac{P^2_{\ell}(\overrightarrow{e})}{a^{4\ell}}\to \rho_1, \text{ in probability.}
    \end{align*}
\end{prop}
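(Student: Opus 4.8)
The plan is to mirror the structure of Proposition~\ref{law of large numbers for simple functionals}, replacing the functional $\sum_{\overrightarrow{e}} S_\ell(\overrightarrow{e})^2/a^{2\ell}$ by $\sum_{\overrightarrow{e}} P_\ell(\overrightarrow{e})^2/a^{4\ell}$. There are two ingredients: a concentration-around-the-mean step via \eqref{concentration Theorem}, and an identification of the limiting mean via the Galton--Watson analysis of Section~\ref{sectionlocal}, in particular Proposition~\ref{thm 25} and its Corollary on moments of $Q_\ell$.

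\textbf{Concentration step.} First I would bound the Lipschitz-type constant: if $m, m'$ are matchings differing on at most $4$ half-edges, then at most $O(k_{\max}^\ell)$ oriented edges $\overrightarrow{e}$ have an $\ell$-neighbourhood (or $2\ell$-neighbourhood, as needed by the definition of $P_\ell$) affected by the change, and for each such $\overrightarrow{e}$ the quantity $P_\ell(\overrightarrow{e})$ is deterministically at most $k_{\max}^{2\ell}$ (it is a sum over $t\le \ell-1$ of $O(k_{\max}^{t})$ pairs, each a product of two walk-counts bounded by $k_{\max}^{\ell}$, so $P_\ell(\overrightarrow{e}) \le \ell\, k_{\max}^{2\ell}\le k_{\max}^{3\ell}$ say). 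Hence $P_\ell(\overrightarrow{e})^2/a^{4\ell}\le k_{\max}^{6\ell}/a^{4\ell}$, and the total change in $\sum_{\overrightarrow{e}} P_\ell(\overrightarrow{e})^2/a^{4\ell}$ between $m$ and $m'$ is at most $c\, k_{\max}^{\ell} k_{\max}^{6\ell}/a^{4\ell}\le c\, k_{\max}^{7\ell}/a^{4\ell}$. Choosing $\delta_0 < \eta/16$ small enough (recall $k_{\max}^\eta < a$ and $\ell = \delta_0\log_a n$), this is $\le n^{1/2 - \beta}$ for some $\beta>0$, and then \eqref{concentration Theorem} gives that $\frac1n\sum_{\overrightarrow{e}} P_\ell(\overrightarrow{e})^2/a^{4\ell}$ is within $o(1)$ of its expectation with probability $1 - \exp(-n^{c})$. (The exact admissible exponent $\delta_0$ is the only thing to be careful about; this is why the hypothesis $\delta_0 < \eta/16$ is imposed in Propositions~\ref{prop:asymptotics_for_ probabilistic}~and~\ref{prop:second_eigenvalue} rather than the weaker $\eta/8$.)

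\textbf{Identification of the mean.} Next I would show $\E\big[P_\ell(\overrightarrow{e})^2\big]/a^{4\ell} \to \rho_1$ for a single fixed $\overrightarrow{e}$, uniformly over $\overrightarrow{e}\in\overrightarrow{E}$. By Corollary~\ref{cor tv for edge} the $2\ell$-neighbourhood of $\overrightarrow{e}=(u,v)$ is, up to total variation error $c/n^{3/4}$, a unimodular Galton--Watson tree with the offspring of the root $v$ distributed as $Z_1 = k-1$ where $\deg(v)=k$ (and by Observation~\ref{obs different start} the differing first-generation law is harmless). On that good event, Observation~\ref{tree P_l(e)} says $\Tilde{S}_x(y) = S_x(y)$ coincide with the walk counts $Z_\cdot$ on the Galton--Watson tree, and comparing the definition of $P_\ell(\overrightarrow{e})$ with \eqref{Q_l}--\eqref{denf Q_l} one sees $P_\ell(\overrightarrow{e})$ is exactly (the tree analogue of) $Q_\ell$ — the sum over the generation $t\le\ell-1$ at which a walk of length $2\ell+2$ that backtracks once branches off to a fresh vertex. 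Hence $P_\ell(\overrightarrow{e})/a^{2\ell} \to C\, Z_\infty$ a.s.\ and in $L_2$ by Proposition~\ref{thm 25}, so $\E[P_\ell(\overrightarrow{e})^2]/a^{4\ell} \to C^2 \E[Z_\infty^2 \mid Z_1 = k-1]$. Averaging over $\overrightarrow{e}$ with the correct weights (a fraction $\approx \frac{i n_i}{\sum_j j n_j} \to \frac{i P(i)}{\E P}$ of oriented edges $\overrightarrow{e}=(u,v)$ have $\deg(v)=i$) yields a single limit $\rho_1 > 0$; positivity follows because $Z_\infty$ is a.s.\ positive (Lemma~\ref{lemma limit of Zt}). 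To control the averaged expectation rigorously one truncates: on the coupling-failure event the contribution of a single edge is $\le k_{\max}^{6\ell}/a^{4\ell}\cdot c/n^{3/4} = o(1)$ as in the bound $(11)$-style estimate inside the proof of Proposition~\ref{law of large numbers for simple functionals}, and $\E[P_\ell(\overrightarrow{e})^2]/a^{4\ell}$ is uniformly bounded by the Corollary on $\E Q_\ell^p$ (with $p=2$), so dominated convergence applies.

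\textbf{Main obstacle.} The routine part is the concentration bound; the delicate part is the bijective identification of $P_\ell(\overrightarrow{e})$ with the Galton--Watson functional $Q_\ell$ on the good (tree) event, i.e.\ checking that the combinatorial description "walk of length $2\ell+2$, backtracking once at position $\ell$, with the branch-off generation indexed by $t$" genuinely matches the nested sum $\sum_{t}\sum_{f\in\mathcal{Y}_t(\overrightarrow{e})}\sum_{g\ne h\in\mathcal{Y}_1(f)}\Tilde S_{\ell-t-1}(g)\Tilde S_{t+1}(h)$, with no off-by-one errors in the exponents (so that the normalisation is $a^{2\ell}$ and not $a^{2\ell\pm 1}$, which is exactly the content of \eqref{Q_l} and the displays around \eqref{Qbar}). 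Once that dictionary is in place, the $L_2$ convergence of $Q_\ell/a^{2\ell}$ from Proposition~\ref{thm 25} and the moment bound do the rest, and the proof is essentially a transcription of the proof of Proposition~\ref{law of large numbers for simple functionals} with $(S_\ell(\overrightarrow{e}))^2/a^{2\ell}$ replaced by $P_\ell(\overrightarrow{e})^2/a^{4\ell}$.
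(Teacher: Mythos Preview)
Your proposal is correct and follows exactly the paper's approach: concentration via \eqref{concentration Theorem} using a Lipschitz bound on $F(m)=\sum_{\overrightarrow{e}} P_\ell(\overrightarrow{e})^2/a^{4\ell}$, followed by identification of the mean with $\E Q_\ell^2/a^{4\ell}$ through the tree coupling (Corollary~\ref{cor tv for edge}, Observation~\ref{tree P_l(e)}) and Proposition~\ref{thm 25}. One bookkeeping point: since $P_\ell(\overrightarrow{e})$ depends on the $2\ell$-neighbourhood of $\overrightarrow{e}$, the number of affected edges under a $4$-half-edge switch is $O(k_{\max}^{2\ell})$ rather than $O(k_{\max}^{\ell})$, and combined with the sharper per-edge bound $P_\ell(\overrightarrow{e})^2\le C k_{\max}^{4\ell}$ this gives the paper's Lipschitz constant $Ck_{\max}^{6\ell}/a^{4\ell}\le n^{1/2-4\delta_0}$; your looser $k_{\max}^{7\ell}/a^{4\ell}$ still fits under $\delta_0<\eta/16$, so nothing breaks.
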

\begin{proof}
    We will compare 
    $$F(\mathbf{\Tilde{G}}_n) = \sum_{\overrightarrow{e}\in \overrightarrow{E}} \frac{P^2_{\ell}(\overrightarrow{e})}{a^{4\ell}}$$ 
    with its mean, and then its mean with $a^{-4l}\E Q^2_{\ell}$, and then use Proposition~\ref{thm 25}. 

    As in the proof of Proposition~\ref{law of large numbers for simple functionals}, we will compare $F(m)$ and $F(m')$, for two matchings of the half-edges that differ on at most 4 half-edges.

Analogously to \eqref{sum S_l(e)^2} one can show that for any $\delta_0\in (0,\frac{\eta d}{12})$:
 \begin{align*}
     |F(m)-F(m')|
     \ \leq \
     C \frac{k^{6\ell}_{\max}}{a^{4\ell}}
     \ \leq \
     n^{\frac{1}{2}-4\delta_0}
 \end{align*}
 Applying \eqref{concentration Theorem} we see that $|\E F(\mathbf{\Tilde{G}}_n)- F(\mathbf{\Tilde{G}}_n)|$ is asymptotically negligible. 

By Corollaries \ref{cor total variation for vertex}, \ref{cor tree}, \ref{cor tree} and \eqref{prob of a tree}, for any $\overrightarrow{e}\in \overrightarrow{E}$,
 \begin{align}\label{prob tree edge 2l}
     \mathbb{P}(\mathbf{\Tilde{G}}_n(\overrightarrow{e})_{2\ell} \text{ is not a tree})
     \ \leq \ 
     \frac{c}{n^{\frac{3}{4}}} 
 \end{align}
 for any $\delta_0<\frac{d \eta}{16}$. Given \eqref{prob tree edge 2l} and Observation~\ref{tree P_l(e)} the proof continues analogously with the proof of Proposition~\ref{law of large numbers for simple functionals} and is thus omitted.
 \end{proof}
 We now prove that the growth of $S_{\ell}(\overrightarrow{e})$ is almost geometric for most $\overrightarrow{e}\in \overrightarrow{E}$.
 \begin{corollary}\label{tree and growth}
    Fix $\ell=\delta_0\log_a n$ with $\delta_0<\frac{\eta d}{8}$. For $\overrightarrow{e}\in \overrightarrow{E}$, define the event $\mathcal{E}(\overrightarrow{e})$ such that:
     \\ The graph $\mathbf{\Tilde{G}}_n(\overrightarrow{e})_{\ell}$ is a tree, and for  $0\leq t \leq \ell$ it is true that 
       for $C=C(1)$ of Proposition~\ref{growth of Zt} we have:
     \begin{align*}
          |S_t(\overrightarrow{e}) - a^{t-l}S_{\ell}(\overrightarrow{e}) | \ \leq \ C a^{\frac{t}{2}} (\log^{C} n) 
     \end{align*}
     Then for  some absolute constant $c>0$ we have:
     \begin{align*}
         \mathbb{P}\left[ \sum_{\overrightarrow{e}\in \overrightarrow{E}} \mathbf{1}\left( \mathcal{E}^c(\overrightarrow{e}) \right)\geq   n^{1- \frac{d}{2}}  \log n   \right] 
         \ \leq \ 
         \frac{c}{\log n}
     \end{align*}
     \end{corollary}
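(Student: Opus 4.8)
The plan is to reduce the corollary to a union bound over oriented edges $\overrightarrow{e}$, controlling the probability that $\mathcal{E}^c(\overrightarrow{e})$ holds for any fixed $\overrightarrow{e}$, and then to invoke Markov's inequality exactly as in Observation~\ref{observation number of edges}. So the first step is to show that for each fixed $\overrightarrow{e}\in\overrightarrow{E}$ we have $\mathbb{P}(\mathcal{E}^c(\overrightarrow{e}))\leq c/n^{3/4}$ for some absolute constant $c>0$; once this is established, Markov's inequality gives
\[
\mathbb{P}\!\left[\sum_{\overrightarrow{e}\in\overrightarrow{E}}\mathbf{1}(\mathcal{E}^c(\overrightarrow{e}))\geq n^{1/4}\log n\right]\ \leq\ \frac{|\overrightarrow{E}|}{n^{1/4}\log n}\max_{\overrightarrow{e}}\mathbb{P}(\mathcal{E}^c(\overrightarrow{e}))\ \leq\ \frac{c'n}{n^{1/4}\log n}\cdot\frac{1}{n^{3/4}}\ =\ \frac{c'}{\log n},
\]
using $|\overrightarrow{E}|=\sum_i i n_i = O(n)$. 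This is precisely the estimate claimed.

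For the per-edge bound I would split $\mathcal{E}^c(\overrightarrow{e})$ into two bad events: (a) the $\ell$-neighbourhood of $\overrightarrow{e}$ fails to be a tree, and (b) the neighbourhood is a tree but the geometric-growth inequality $|S_t(\overrightarrow{e})-a^{t-\ell}S_\ell(\overrightarrow{e})|\leq C a^{t/2}\log^C n$ fails for some $0\leq t\leq \ell$. Event (a) has probability at most $c/n^{3/4}$ by the second statement of Corollary~\ref{cor tv for edge}. For event (b), I would use the total-variation coupling of Corollary~\ref{cor tv for edge}: on the event that the coupling succeeds (which also costs only $c/n^{3/4}$), the sequence $\{S_t(\overrightarrow{e})\}_{t\leq\tau_2(v)}$ is distributed exactly as $\{Z_t\}_{t\leq\tau_2(v)}$ for a unimodular Galton–Watson process with $Z_1 = k-1$. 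Then Proposition~\ref{growth of Zt} — applied with $\gamma$ chosen so that $1-1/n^\gamma$ matches the desired $1-c/n^{3/4}$ (e.g.\ $\gamma=1$, or any $\gamma\geq 3/4$, using $t,s\leq\ell$ and the substitution $s\mapsto t$, $t\mapsto\ell$) — gives exactly $|a^{t-\ell}Z_\ell - Z_t|\leq C(t+1)a^{t/2}\log^C n$ with the stated probability, after absorbing the factor $(t+1)=O(\log n)$ into $\log^C n$ by enlarging $C$. Since the constant $C=C(1)$ in the statement of the corollary is precisely the one furnished by Proposition~\ref{growth of Zt}, this matches. A minor point to handle is that Proposition~\ref{growth of Zt} is stated for the standard initial distribution; here the first generation has $Z_1=k-1$ children, which is covered by Observation~\ref{obs different start} since $k-1\geq k_{\min}-1\geq 1$.

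Combining, $\mathbb{P}(\mathcal{E}^c(\overrightarrow{e}))\leq \mathbb{P}(\text{(a)}) + \mathbb{P}(\text{coupling fails}) + \mathbb{P}(\text{growth fails}) \leq c/n^{3/4}$ for a suitable absolute constant, and the union bound above finishes the proof. The main obstacle — though it is really a bookkeeping issue rather than a conceptual one — is making sure the exponents line up: one must check that the $n^{1/8}$ bound on $\tau_2(v)$ (from $\ell=\delta_0\log_a n$ with $\delta_0<\eta/8$ and $k_{\max}^\eta<a$) keeps all the error terms in Corollary~\ref{cor tv for edge} and Proposition~\ref{growth of Zt} at size $O(n^{-3/4})$, and that the final Markov bound indeed lands at $c/\log n$ rather than something weaker. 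Everything downstream is a routine combination of the cited results.
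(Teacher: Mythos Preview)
Your proposal is correct and follows essentially the same route as the paper's proof: establish $\mathbb{P}(\mathcal{E}^c(\overrightarrow{e}))\leq c/n^{3/4}$ via Corollary~\ref{cor tv for edge} and Proposition~\ref{growth of Zt} with $\gamma=1$, then apply Markov's inequality. The paper's argument is terser but invokes exactly the same two ingredients; your version simply spells out the decomposition into the tree-failure, coupling-failure, and growth-failure events and notes the use of Observation~\ref{obs different start}, all of which the paper leaves implicit.
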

 \begin{proof}
     Each event $\mathcal{E}(\overrightarrow{e})$ holds with probability $1-\frac{c}{n^{\frac{3}{4}}}$, due to Proposition~\ref{growth of Zt} applied for $\gamma=1$ and Corollary~\ref{cor tv for edge}. 

     Thus the union bound and Markov's inequality yields:
     \begin{equation*}
          \mathbb{P}\left[ \sum_{\overrightarrow{e}\in \overrightarrow{E}} \mathbf{1}\left( \mathcal{E}^c(\overrightarrow{e}) \right)\geq n^{\frac{1}{4}} \log n \right] 
          \ \ \leq \ \ 
          c \frac{n}{n^{1-\frac{d}{2}}\log n} \cdot \frac{1}{n^{d/2}}
          \ \ \leq \ \ 
          \frac{c}{\log n} \qedhere
     \end{equation*}
     \end{proof}
     \begin{notation}\label{defn_of_mathbfE}
     In what follows   $\mathbf{E}_{\ell}$ denotes the set of edges $\overrightarrow{e}$ for which the event $\mathcal{E}(\overrightarrow{e})$ of Corollary~\ref{tree and growth} does not hold. 
        Clearly with high probability the cardinality of $\mathbf{E}_{\ell}$ is at most $n^{1-\frac{d}{2}}\log n$ by Corollary~\ref{tree and growth}. 
     \end{notation}
     Next we describe another property of $\mathbf{\Tilde{G}}_n$, which will be important later in the proof of Proposition \ref{prop:second_eigenvalue}.
     \begin{defn}\label{defn_l_tangle_free}
     Given a multigraph $G$ and a natural number $m>1$, we say  $G$ is \emph{$m$-tanglefree} if every $m$-neighbourhood of $G$ contains at most one circle.
     \end{defn}
     \begin{lemma}\label{tanglefree multi}
         With probability tending to $1$,  the graph $\mathbf{\Tilde{G}}_n$ is $\ell$-tanglefree 
         for $\ell=\delta_0 \log_a n$ and $\delta_0<\frac{\eta d}{16}$.
        \end{lemma}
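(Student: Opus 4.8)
\textbf{Proof proposal for Lemma~\ref{tanglefree multi}.}

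The plan is to bound, via a first-moment (union-bound) argument, the expected number of ``tangles'' — i.e.\ $\ell$-neighbourhoods containing at least two independent cycles — and show this expectation tends to $0$. First I would make the combinatorial notion precise: $\mathbf{\Tilde{G}}_n$ fails to be $\ell$-tanglefree exactly when there is a vertex $v$ and a subgraph $H\subseteq (\mathbf{\Tilde{G}}_n(v))_\ell$ with $|E(H)|\ge |V(H)|+1$ that is connected and has radius at most $\ell$ around $v$. A standard reduction shows we may take $H$ to be minimal: the union of two (not necessarily disjoint) paths or a ``theta/dumbbell'' configuration, so that $|V(H)|\le 2\ell+1$ (really $\le c\ell$ for a small constant) and $|E(H)|=|V(H)|+1$. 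Hence it suffices to prove
\[
\sum_{H} \mathbb{P}\bigl(H\subseteq \mathbf{\Tilde{G}}_n\bigr) \ = \ o(1),
\]
where the sum is over all such candidate subgraphs $H$ on labelled vertices of $[n]$ with $|V(H)|\le c\ell$ and exactly one more edge than vertex.

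The second step is to estimate each term and count. In the configuration model, for a fixed graph $H$ with vertex set $S\subseteq[n]$ and edge multiset of size $|V(H)|+1$, the probability that all these edges are present is at most $\prod_{\{x,y\}\in E(H)} \frac{k_{\max}^2}{\sum_i i n_i - 2|E(H)| - 1} \le \bigl(\tfrac{C}{n}\bigr)^{|V(H)|+1}$ for $n$ large, using that each half-edge has at most $k_{\max}$ choices at its vertex and that the normalizing constant $\sum_i i n_i$ grows like a constant times $n$ (this is exactly the kind of bound underlying \eqref{prob of a tree} and \eqref{X-t and Z_t} in Theorem~\ref{theorem_properties_conf_model}). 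The number of ways to choose the labelled vertex set and the combinatorial type of $H$ with $j$ vertices is at most $n^{j}\cdot (c\ell)^{O(j)}$ (choosing $j$ vertices, then the at most $j+1$ edges among them, with degree constraints bounded by $k_{\max}$). Therefore the contribution of all $H$ with $|V(H)|=j$ is at most
\[
n^{j}\,(c\ell)^{O(j)}\,\Bigl(\frac{C}{n}\Bigr)^{j+1} \ = \ \frac{(c'\ell)^{O(j)}}{n},
\]
and summing over $2\le j\le c\ell$ gives a bound of the form $\frac{(c''\ell)^{O(\ell)}}{n}$. The final step is to check this is $o(1)$ for our range of $\ell$: since $\ell=\delta_0\log_a n$, we have $(c''\ell)^{O(\ell)} = \exp\bigl(O(\ell\log\ell)\bigr)=\exp\bigl(O(\log n\cdot\log\log n)\bigr)$, which is indeed $n^{o(1)}=o(n)$; hence the whole sum tends to $0$, and $\mathbf{\Tilde{G}}_n$ is $\ell$-tanglefree with probability tending to $1$.

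The main obstacle, and the part requiring care, is the combinatorial bookkeeping in the second step: correctly classifying the minimal ``excess-one'' subgraphs (paths sharing endpoints, theta-graphs, figure-eights, and their subdivisions) so that the count $n^{j}(c\ell)^{O(j)}$ is valid with the right exponent on $n$ — one must be sure the excess edge really forces a factor $n^{-1}$ relative to the vertex count, i.e.\ that $|E(H)| = |V(H)|+1$ rather than $|V(H)|$. A secondary technical point is passing from the configuration-model (multigraph) computation to the stated conclusion, but since the lemma is stated for $\mathbf{\Tilde{G}}_n$ directly this is immediate; the analogous statement for $\mathbf{G}_n$ would then follow from Theorem~\ref{from multi to graphs} (the multigraph-to-graph conditioning) since conditioning on a positive-probability event preserves high-probability statements. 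I would also remark that essentially this computation appears as the tanglefree estimate in \cite{bordenave2015new}, so the write-up can be kept brief by citing the parallel argument there.
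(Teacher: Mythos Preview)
Your first-moment strategy on minimal ``excess-one'' subgraphs is a legitimate route to $\ell$-tanglefreeness and is indeed close to what is done in \cite{bordenave2015new}. However, the argument as written fails at the last line: the assertion that $\exp\bigl(O(\log n\cdot\log\log n)\bigr)=n^{o(1)}$ is false. In fact $\exp(c\log n\log\log n)=n^{c\log\log n}$ is super-polynomial in $n$, so the bound $(c''\ell)^{O(\ell)}/n$ that you arrive at tends to $\infty$, not $0$. The source of the problem is the count $n^{j}\,(c\ell)^{O(j)}$: while this is a valid upper bound, it is far too crude. For a theta or handcuff on a given ordered $j$-tuple of vertices, the number of structures (including half-edge labels) is $\text{poly}(j)\cdot k_{\max}^{O(j)}=C_0^{O(j)}$ for an absolute constant $C_0=C_0(k_{\max})$, not $(c\ell)^{O(j)}$. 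With this correction the expected tangle count becomes $\tfrac{1}{n}\sum_{j\le c\ell} C_0^{O(j)}=k_{\max}^{O(\ell)}/n$, which is $n^{O(\delta_0/\eta)-1}\to 0$ for $\delta_0$ small enough. Note that the constant you get this way (roughly $k_{\max}^{12\ell}/n$, since a handcuff in an $\ell$-ball can have up to about $6\ell$ edges) is weaker than the stated range $\delta_0<\eta/8$; you would need to argue more carefully, or simply accept a smaller threshold on $\delta_0$.

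The paper takes a different and somewhat tighter route: rather than enumerating subgraphs, it uses the exploration process of Theorem~\ref{theorem_properties_conf_model}. For each vertex $v$ one runs the breadth-first exploration out to level $\ell$ and observes that the $\ell$-ball of $v$ contains two cycles only if at least two of the indicators $\epsilon_t=\mathbf{1}(\sigma(e_t)\in A_t)$ fire before the stopping time $\tau_2(v)\le k_{\max}^\ell$. By \eqref{epsilon_l_tangle_free} each $\epsilon_t$ fires with conditional probability at most $Ck_{\max}^{\ell}/n$, so by a binomial comparison $\mathbb P\bigl(\sum_t\epsilon_t\ge 2\bigr)\le C'k_{\max}^{4\ell}/n^{2}$, and a union bound over $v$ gives $k_{\max}^{4\ell}/n\le n^{-1/2}$ for $\delta_0<\eta/8$. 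This approach sidesteps the subgraph enumeration entirely, which is what lets it reach the exact constant in the statement.
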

        \begin{proof}
        Recall the notation of Theorem~\ref{theorem_properties_conf_model}.
            Let $v$ be a vertex of the multigraph $\Tilde{\mathbf{G}}_n$. Let $R_t(v) =\sum_{s=1}^{t\wedge\tau_1(v)}\epsilon_s$.  Given \eqref{epsilon_l_tangle_free}, for every $k \in \N$
            \begin{align}\label{1st_ineq_l_tangle_free}
                \mathbb{P}(R_{t}(v)\geq k) 
                \ \ \leq \ \ \ 
                \mathbb{P}(Y\geq k) 
            \end{align}
            where $Y$ is a binomial random variable with parameters $Ck_{\max}^{\ell}$ and $n$, where $C$ is the positive constant of \eqref{epsilon_l_tangle_free}. Moreover for binomial random variables we have:
            \begin{align}\label{2nd_ineq_l_tangle_free}
                 \mathbb{P}(Y\geq k)
                 \ \ \leq \ \ 
                 c^2\frac{k_{\max}^{2k\ell}}{n^{k}}
            \end{align}
            Thus by combining \eqref{1st_ineq_l_tangle_free}, \eqref{2nd_ineq_l_tangle_free} we conclude that:
            \begin{align*}
                \mathbb{P}(\Tilde{\mathbf{G}}_n \text{ is  $\ell$-tanglefree })
                \ \leq \ 
                \sum_{v \in V(\Tilde{\mathbf{G}}_n)}   \mathbb{P}(R_{t}(v)\geq 2)
                \ \leq \
                n   \mathbb{P}(Y\geq 2) 
                \ \leq \
              \frac{k_{\max}^{4\ell}}{n}
            \end{align*}
         Our choice of $\delta_0$ ensures  $ \mathbb{P}(\Tilde{\mathbf{G}}_n \text{ is $\ell$-tanglefree}) \ \leq \ n^{-\frac{1}{2}}$, which implies the result.
        \end{proof}
     \begin{lemma}\label{S_tx,)}
     For $\ell=\delta_0 \log_a n$ and $\delta_0\leq \frac{\eta d}{16}$,  let the $|\overrightarrow{E}|$-dimensional vector $\mathbf{S}_t= \left(S_{t}(\overrightarrow{e})\right)_{\overrightarrow{e}\in \overrightarrow{E}}$ for $t\in [\ell]$. Then with high probability, for  $1 \leq t \leq \ell-1$ the following holds for some absolute constant $c>0$:
     \begin{align*}
         \sup_{\|x\|=1:\langle \mathbf{S}_{\ell},x \rangle=0}\left| \langle \mathbf{S}_t,x \rangle\right| 
         \ \leq \ 
         a^{\frac{t}{2}} \sqrt{n}\log^c n 
     \end{align*}
     \end{lemma}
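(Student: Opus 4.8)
The plan is to realize $\mathbf{S}_t$ as a near-scalar-multiple of $\mathbf{S}_\ell$ up to a controllably small error, and then exploit the orthogonality constraint. Put $\mathbf{R}_t := \mathbf{S}_t - a^{t-\ell}\mathbf{S}_\ell$. For any unit vector $x$ with $\langle \mathbf{S}_\ell, x\rangle = 0$ one has $\langle \mathbf{S}_t, x\rangle = a^{t-\ell}\langle \mathbf{S}_\ell, x\rangle + \langle \mathbf{R}_t, x\rangle = \langle \mathbf{R}_t, x\rangle$, so by Cauchy--Schwarz $|\langle \mathbf{S}_t, x\rangle| \le \|\mathbf{R}_t\|$. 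It thus suffices to prove that, with high probability, $\|\mathbf{R}_t\| \le a^{t/2}\sqrt{n}\,\log^c n$ holds for all $1 \le t \le \ell-1$ at once; note that this already reduces the statement, which a priori involves a supremum over a continuum of directions $x$, to a bound on a single random quantity, so no union bound over $x$ is needed.

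To bound $\|\mathbf{R}_t\|^2 = \sum_{\overrightarrow{e}\in\overrightarrow{E}} R_t(\overrightarrow{e})^2$ I would split the edge set according to the exceptional set $\mathbf{E}_\ell$ of Notation~\ref{defn_of_mathbfE}. For $\overrightarrow{e}\notin\mathbf{E}_\ell$ the event $\mathcal{E}(\overrightarrow{e})$ of Corollary~\ref{tree and growth} holds, which is exactly the almost-geometric growth estimate $|R_t(\overrightarrow{e})| = |S_t(\overrightarrow{e}) - a^{t-\ell}S_\ell(\overrightarrow{e})| \le C a^{t/2}\log^C n$ valid for all $0 \le t \le \ell$; since $|\overrightarrow{E}| = \sum_{i}in_i \le k_{\max}n$, these edges contribute at most $C^2 k_{\max}\, n\, a^t\log^{2C} n$ to $\|\mathbf{R}_t\|^2$. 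For $\overrightarrow{e}\in\mathbf{E}_\ell$ I would use only the deterministic bounds $S_t(\overrightarrow{e}) \le k_{\max}^t \le k_{\max}^\ell$ and $a^{t-\ell}S_\ell(\overrightarrow{e}) \le k_{\max}^\ell$; because $k_{\max}^\eta < a$ and $\delta_0 \le \eta/8$ we have $k_{\max}^\ell = n^{\delta_0\log_a k_{\max}} \le n^{1/8}$, so $R_t(\overrightarrow{e})^2 \le 4n^{1/4}$, and combined with $|\mathbf{E}_\ell| \le n^{1/4}\log n$ (which holds with high probability by Corollary~\ref{tree and growth}) these edges contribute at most $4n^{1/2}\log n$. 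Adding the two contributions and using that $a^t \ge a > 1$ for $t \ge 1$ (so the bulk term $\Theta(n a^t)$ dominates the exceptional term $O(n^{1/2}\log n)$) gives $\|\mathbf{R}_t\|^2 \le c' n\, a^t \log^{2C} n$, and taking square roots yields the claim after enlarging the constant in the exponent of $\log n$ to absorb $\sqrt{c'}$ and $C$.

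I do not expect a genuine obstacle: Corollary~\ref{tree and growth} has already done the analytic work of establishing $S_t(\overrightarrow{e}) \approx a^{t-\ell}S_\ell(\overrightarrow{e})$ off a negligible set of edges, and the present lemma is just the linear-algebraic packaging of that fact --- the error vector $\mathbf{R}_t$ is entrywise tiny on the bulk, and although it can be as large as a small power of $n$ on the exceptional edges, there are far too few of those for their total $\ell_2$ mass to compete with the $\Theta(n a^t)$ coming from the bulk. The only point requiring a little care is the bookkeeping of exponents, i.e.\ checking that the per-edge error on $\mathbf{E}_\ell$ is at most a small power of $n$, which is precisely what the hypothesis $\delta_0 \le \eta/8$ together with $k_{\max}^\eta < a$ guarantees; this is also why the lemma is stated only for this particular range of $\ell$.
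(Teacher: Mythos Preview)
Your proposal is correct and follows essentially the same approach as the paper: both arguments use $\langle\mathbf{S}_\ell,x\rangle=0$ to rewrite $\langle\mathbf{S}_t,x\rangle$ as $\langle\mathbf{R}_t,x\rangle$ with $\mathbf{R}_t=\mathbf{S}_t-a^{t-\ell}\mathbf{S}_\ell$, split over $\mathbf{E}_\ell$ and its complement, invoke Corollary~\ref{tree and growth} on the bulk and the deterministic bound $k_{\max}^\ell\le n^{1/8}$ on the exceptional set, and finish with Cauchy--Schwarz. The only cosmetic difference is that you bound $\|\mathbf{R}_t\|$ directly and apply Cauchy--Schwarz once at the end, whereas the paper keeps $x$ in the estimate and applies Cauchy--Schwarz separately to each of the three sums; the resulting bounds are identical.
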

     \begin{proof}
        Fix $\|x\|=1$ with $\langle \mathbf{S}_{\ell},x \rangle=0$. Recall the set $\mathbf{E}_{\ell}$ in Notation~\ref{defn_of_mathbfE}. Then with high probability $|\mathbf{E}_{\ell}|\leq n^{1-\frac{d}{2}}\log n$. 
        We condition on the event for which this bound holds. 
        
        Since $\langle \mathbf{S}_{\ell},x \rangle=0$ we have:
        \begin{align}\label{prelim bound on S,x}
        \left| \langle \mathbf{S}_t,x \rangle\right| \leq \sum_{\overrightarrow{e}\notin \mathbf{E}_{\ell}} |x_{\overrightarrow{e}}| \left|S_t(\overrightarrow{e}) - a^{t-l}S_{\ell}(\overrightarrow{e}) \right| + \sum_{\overrightarrow{e}\in \mathbf{E}_{\ell}}a^{t-l} |x_{\overrightarrow{e}}| S_{\ell}(\overrightarrow{e})| + \sum_{\overrightarrow{e}\in \mathbf{E}_{\ell}} |x_{\overrightarrow{e}}| S_t(\overrightarrow{e})
    \end{align}
    For the second and third sums of \eqref{prelim bound on S,x} we use the Cauchy-Schwarz inequality to get
    \[ \sum_{\overrightarrow{e}\in \mathbf{E}_{\ell}}a^{t-l} |x_{\overrightarrow{e}}| S_{\ell}(\overrightarrow{e})| + \sum_{\overrightarrow{e}\in \mathbf{E}_{\ell}} |x_{\overrightarrow{e}}| S_t(\overrightarrow{e}) 
    \ \leq \
    \left(k^{t}_{\max}+\frac{k^{\ell}_{\max}}{a^{\ell-t}}\right)\sqrt{|\mathbf{E}_{\ell}|}
    \ \leq \ 
    \sqrt{n}\log^\frac{1}{2} n\]
     For the first sum of \eqref{prelim bound on S,x}, note that 
     the assumptions of Corollary~\ref{tree and growth} hold
     for $\overrightarrow{e}\notin \overrightarrow{E}$. Thus by the Cauchy-Schwarz inequality we get that 
     \[\sum_{\overrightarrow{e}\notin \mathbf{E}_{\ell}} |x_{\overrightarrow{e}}| \left|S_t(\overrightarrow{e}) - a^{t-l}S_{\ell}(\overrightarrow{e}) \right|
     \ \leq \ 
     a^{\frac{t}{2}}\log^c n \sqrt{|\overrightarrow{E}\setminus\mathbf{E}_{\ell}|} 
     \ \leq \ 
     \sqrt{n} a^{\frac{t}{2}}\log^c n\]
   for some absolute constant $c>0$.  
   The claim follows.
   \end{proof}
For $e \in \overrightarrow{E}$ with $\overrightarrow{e}=(u,v)$, let $\deg(\overrightarrow{e})=\deg(v)$, let $\overrightarrow{e}^{-1}=(v,u)$, and for $t \in \N$
   \begin{align*}
       T_t(\overrightarrow{e}) \ = \ \sum_{f \in \mathcal{Y}_t(\overrightarrow{e})} \deg(f)-1.
   \end{align*}
   Let $\mathbf{T}_t=(T_{\overrightarrow{e}^{-1}})_{\overrightarrow{e}\in \overrightarrow{E}}$. Note that the degree of each vertex is given in Definition~\ref{defn:configuration_model} so $\mathbf{T}_t$ 
   is deterministic.
   \begin{corollary}\label{cor_for_T}
   Fix $\ell=\delta_0 \log_a n$ where $\delta_0\leq \frac{\eta d}{16}$. Then with high probability, for  $1 \leq t \leq \ell-1$ we have: 
     \begin{align*}
         \sup_{\|x\|=1:\langle \mathbf{T}_{\ell},x \rangle=0}\left| \langle \mathbf{T}_t,x \rangle\right|
         \ \leq \
         a^{\frac{t}{2}} \sqrt{n}\log^c n 
     \end{align*}
     for some absolute constant $c>0$.
   \end{corollary}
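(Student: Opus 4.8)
The plan is to mirror the proof of Lemma~\ref{S_tx,)}, with the random walk-counts $S_t(\overrightarrow{e})$ replaced by the deterministic quantities $T_t(\overrightarrow{e}^{-1})$.

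First I would record the analogue of Corollary~\ref{tree and growth} for the vectors $\mathbf{T}_t$. With $\ell=\delta_0\log_a n$ and $\delta_0<\frac{\eta}{16}$, let $\mathbf{F}_{\ell}\subseteq\overrightarrow{E}$ be the set of oriented edges $\overrightarrow{e}$ for which either $(\mathbf{\Tilde{G}}_n(\overrightarrow{e}^{-1}))_{\ell}$ fails to be a tree, or
\begin{equation*}
\bigl|\,T_t(\overrightarrow{e}^{-1})-a^{t-\ell}T_\ell(\overrightarrow{e}^{-1})\,\bigr|\ >\ C\,a^{t/2}(\log n)^{C}\quad\text{for some }0\leq t\leq\ell,
\end{equation*}
where $C=C(1)$ is the constant of Proposition~\ref{growth of Zt}. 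I claim $|\mathbf{F}_{\ell}|\leq n^{1/4}\log n$ with high probability. The key point is that, although $\mathbf{T}_t$ is determined by the graph, the event $\overrightarrow{e}\in\mathbf{F}_{\ell}$ is still governed by the local geometry: on the event that $(\mathbf{\Tilde{G}}_n(\overrightarrow{e}^{-1}))_{\ell}$ is a tree, $T_t(\overrightarrow{e}^{-1})$ equals, up to a shift of the generation index by an absolute constant, the generation-$t$ size $Z_t$ of the rooted tree obtained by exploring from the head of $\overrightarrow{e}^{-1}$ with the edge underlying $\overrightarrow{e}$ deleted -- a tree whose first generation has the altered size $\deg(\overrightarrow{e}^{-1})-1$, which is harmless by Observation~\ref{obs different start}. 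Corollary~\ref{cor tv for edge} couples this rooted tree with a unimodular Galton--Watson tree up to total variation $c/n^{3/4}$, and Proposition~\ref{growth of Zt} applied with $\gamma=1$ gives $|a^{t-\ell}Z_\ell-Z_t|\leq C(\ell+1)a^{t/2}(\log n)^{C}$ for all $0\leq t\leq\ell$ with probability $1-n^{-1}$; absorbing the factor $\ell+1=O(\log n)$ into the power of the logarithm yields $\mathbb{P}(\overrightarrow{e}\in\mathbf{F}_{\ell})\leq c/n^{3/4}$. A union bound over the $O(n)$ oriented edges together with Markov's inequality then gives $\mathbb{P}(|\mathbf{F}_{\ell}|\geq n^{1/4}\log n)\leq \frac{c\,|\overrightarrow{E}|}{n^{1/4}\log n}\cdot n^{-3/4}\leq \frac{c}{\log n}$, exactly as in Corollary~\ref{tree and growth}.

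With this in hand the stated bound is a transcription of Lemma~\ref{S_tx,)}. Fix $x$ with $\|x\|=1$ and $\langle\mathbf{T}_\ell,x\rangle=0$ and condition on the event $|\mathbf{F}_{\ell}|\leq n^{1/4}\log n$. Since $\langle\mathbf{T}_\ell,x\rangle=0$ we may write
\begin{equation*}
\langle\mathbf{T}_t,x\rangle\ =\ \sum_{\overrightarrow{e}\in\overrightarrow{E}}x_{\overrightarrow{e}}\bigl(T_t(\overrightarrow{e}^{-1})-a^{t-\ell}T_\ell(\overrightarrow{e}^{-1})\bigr),
\end{equation*}
and split the sum according to whether $\overrightarrow{e}\in\mathbf{F}_{\ell}$. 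Over $\overrightarrow{E}\setminus\mathbf{F}_{\ell}$ each summand has absolute value at most $C\,a^{t/2}(\log n)^{C}$, so Cauchy--Schwarz and $|\overrightarrow{E}|=O(n)$ bound this part by $C'a^{t/2}\sqrt{n}(\log n)^{C}$. Over $\mathbf{F}_{\ell}$ we use the crude bounds $T_t(\overrightarrow{e}^{-1}),\,a^{t-\ell}T_\ell(\overrightarrow{e}^{-1})\leq k_{\max}^{t+1}$ together with $k_{\max}^{t}\leq k_{\max}^{\ell}=n^{\delta_0\log_a k_{\max}}\leq n^{1/16}$ (since $k_{\max}^{\eta}<a$ and $\delta_0<\frac{\eta}{16}$) and $|\mathbf{F}_{\ell}|\leq n^{1/4}\log n$, so Cauchy--Schwarz bounds this part by $2k_{\max}n^{1/16}\sqrt{n^{1/4}\log n}\leq \sqrt{n}\,a^{t/2}(\log n)^{1/2}$ for $n$ large. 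Summing the two contributions gives the asserted estimate, uniformly over $x$ in the orthogonal complement of $\mathbf{T}_\ell$ and over $1\leq t\leq\ell-1$.

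The only genuinely new ingredient is the observation that on the tree-event the deterministic functional $T_t(\overrightarrow{e}^{-1})$ is (up to an index shift) a Galton--Watson generation size, which is what lets Proposition~\ref{growth of Zt} be invoked; past that point the argument is identical to the one already carried out for $\mathbf{S}_t$, and I anticipate no serious obstacle.
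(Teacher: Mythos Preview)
Your argument is correct and follows essentially the same route as the paper. The paper streamlines by observing that on the tree event one has the identity $T_t(\overrightarrow{e}^{-1})=S_{t+1}(\overrightarrow{e}^{-1})$, so that your bad set $\mathbf{F}_\ell$ is essentially $\{\overrightarrow{e}:\overrightarrow{e}^{-1}\in\mathbf{E}_{\ell+1}\}$ and one may quote the preceding $\mathbf{S}$--lemma directly rather than rebuild the coupling and growth estimate. One minor slip: the crude bound $a^{t-\ell}T_\ell(\overrightarrow{e}^{-1})\leq k_{\max}^{t+1}$ is false in general since $a<k_{\max}$, but replacing it by $a^{t-\ell}T_\ell(\overrightarrow{e}^{-1})\leq k_{\max}^{\ell+1}\leq k_{\max}\,n^{1/16}$ (which is what you actually feed into the final Cauchy--Schwarz estimate) repairs this with no change to the conclusion.
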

\begin{proof}
    Recall the definition of the set $\mathbf{E}_{\ell+1}$ in Notation~\ref{defn_of_mathbfE}. For any $\overrightarrow{e}^{-1} \in \mathbf{E}_{\ell+1}$
    \begin{align}\label{T_l=S_l+1}
        T_{\ell}(\overrightarrow{e}^{-1})=S_{\ell+1}(\overrightarrow{e}^{-1})
    \end{align}
    Due to \eqref{T_l=S_l+1} the remainder of the proof is analogous to the proof of Lemma~\ref{S_tx,)}.
\end{proof}
   \subsection{Conclusion: Results for Simple Graphs}
   We are now ready to prove Proposition~\ref{prop:asymptotics_for_ probabilistic}. 
   The following result allows us to transfer our results from multigraphs to graphs.

   \begin{theorem}\label{thm:graphic,from multi to simple}
       Let $\mathbf{\Tilde{G}}_n$ be a sequence of uniformly chosen multigraphs satisfying the assumptions of Definition~\ref{defn:configuration_model}. Then the following hold:
       \begin{align}\label{graphic sequence}
           \liminf_n \mathbb{P}( \mathbf{\Tilde{G}}_n \text{ is simple})>0
       \end{align}
       and for any sequence of events $\mathcal{H}_n$ with 
       $
           \lim_{n \to \infty}\mathbb{P}(\mathbf{\Tilde{G}}_n \in \mathcal{H}_n)=1
       $
       we have:
       \begin{align}\label{from multi to graphs}
            \lim_{n \to \infty}\mathbb{P}(\mathbf{\Tilde{G}}_n \in \mathcal{H}_n \mid \mathbf{\Tilde{G}}_n \text{ is simple})=1
       \end{align}
   \end{theorem}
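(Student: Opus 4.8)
The plan is to establish \eqref{graphic sequence} via a Poisson approximation for the number of self-loops and the number of parallel edges of $\mathbf{\Tilde{G}}_n$, and then to deduce \eqref{from multi to graphs} by an entirely elementary conditioning estimate; so the only substantive point is that $\mathbb{P}(\mathbf{\Tilde{G}}_n\text{ is simple})$ is bounded below by a fixed positive constant for all large $n$. First I would fix the two relevant statistics: let $X_n$ be the number of self-loops of $\mathbf{\Tilde{G}}_n$ and $Y_n$ the number of unordered pairs of parallel edges, so that $\mathbf{\Tilde{G}}_n$ is simple if and only if $X_n=Y_n=0$. Working with the uniform random matching $\sigma$ of the half-edge set $\Delta^{(n)}$, a self-loop at a vertex $u$ is a pair of half-edges of $\Delta^{(n)}_u$ matched to each other by $\sigma$, and a parallel edge is a choice of two half-edges at some vertex $u$ matched to two half-edges at some vertex $w\ne u$.

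The core step is to compute the joint falling-factorial moments $\E\big[(X_n)_j\,(Y_n)_k\big]$, where $(m)_j:=m(m-1)\cdots(m-j+1)$. Each of these is a sum, over ordered $j$-tuples of distinct ``potential self-loops'' together with ordered $k$-tuples of distinct ``potential parallel edges'', of the probability that $\sigma$ simultaneously realizes all of them. The dominant contribution comes from tuples using $2j+4k$ pairwise distinct half-edges: such a tuple is realized with probability
\[
\prod_{i=0}^{j+2k-1}\frac{1}{|\Delta^{(n)}|-2i-1} \ = \ (1+o(1))\,|\Delta^{(n)}|^{-(j+2k)},
\]
and the number of them is $(1+o(1))\big(\tfrac12\sum_i i(i-1)n_i\big)^{j}\big(\tfrac14\big(\sum_i i(i-1)n_i\big)^{2}\big)^{k}$. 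Since $|\Delta^{(n)}|=\sum_i i\,n_i$, writing $\nu_n:=\big(\sum_i i(i-1)n_i\big)/\big(\sum_i i\,n_i\big)$ we obtain
\[
\E\big[(X_n)_j\,(Y_n)_k\big] \ = \ (1+o(1))\,(\nu_n/2)^{\,j}\,(\nu_n^{2}/4)^{\,k}\ +\ O(1/n),
\]
where the $O(1/n)$ absorbs every tuple forced to share a half-edge — two self-loops sharing one, a self-loop meeting a parallel edge, two parallel edges sharing an edge, and so on — because each forced coincidence costs a factor $\Theta(1/n)$ in probability while inflating the count by only a bounded factor, all degrees lying in the fixed range $\{k_{\min},\dots,k_{\max}\}$. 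Since Definition~\ref{defn:configuration_model} imposes $2\le k_{\min}<k_{\max}<\infty$, the numbers $\nu_n$ stay in a fixed compact subset of $(0,\infty)$.

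Because a Poisson law is determined by its moments, these asymptotics give, along any subsequence with $\nu_n\to\nu$, that $(X_n,Y_n)$ converges jointly in law to $(\mathrm{Poi}(\nu/2),\mathrm{Poi}(\nu^{2}/4))$ with independent coordinates; equivalently, a Chen--Stein coupling bound yields $\operatorname{d_{TV}}\!\big((X_n,Y_n),(\mathrm{Poi}(\nu_n/2),\mathrm{Poi}(\nu_n^{2}/4))\big)\to 0$. Setting $C:=\sup_n(\nu_n/2+\nu_n^{2}/4)<\infty$ we conclude
\[
\mathbb{P}(\mathbf{\Tilde{G}}_n\text{ is simple}) \ = \ \mathbb{P}(X_n=0,\,Y_n=0) \ = \ e^{-\nu_n/2-\nu_n^{2}/4}+o(1) \ \ge\ e^{-C}+o(1)\ >\ 0
\]
for all large $n$, which is \eqref{graphic sequence}. (If in addition $n_i/n\to P(i)$ as in \eqref{rate of convergence assumption simple graph}, the same computation gives the sharp limit $\mathbb{P}(\mathbf{\Tilde{G}}_n\text{ simple})\to e^{-\lambda-\lambda^{2}}$ with $\lambda=\tfrac{\E[P(P-1)]}{2\,\E[P]}$.)

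Finally, \eqref{from multi to graphs} is purely formal: if $\mathbb{P}(\mathbf{\Tilde{G}}_n\text{ simple})\ge c>0$ for all large $n$ and $\mathbb{P}(\mathbf{\Tilde{G}}_n\in\mathcal{H}_n)\to 1$, then
\[
\mathbb{P}(\mathbf{\Tilde{G}}_n\in\mathcal{H}_n \mid \mathbf{\Tilde{G}}_n\text{ simple}) \ \ge\ 1-\frac{\mathbb{P}(\mathbf{\Tilde{G}}_n\notin\mathcal{H}_n)}{\mathbb{P}(\mathbf{\Tilde{G}}_n\text{ simple})}\ \ge\ 1-\frac{o(1)}{c}\ \longrightarrow\ 1.
\]
The one place where real work is needed is the factorial-moment bookkeeping of the second step — pinning down the leading term and verifying that every configuration with an overlapping half-edge contributes only $O(1/n)$ — which is precisely the classical Bollob\'as count for the configuration model with a general degree sequence; rather than reproduce it in full, in the write-up one can simply quote it from \cite{bordenave2016lecture}.
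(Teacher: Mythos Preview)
Your proposal is correct and in fact more informative than the paper's own ``proof'', which simply cites \cite[Thm~1.1]{janson2009probability} for \eqref{graphic sequence} and \cite[Thm~2.20]{bordenave2016lecture} for \eqref{from multi to graphs} without further argument. The Poisson approximation for $(X_n,Y_n)$ via joint factorial moments that you sketch is exactly the classical Bollob\'as--Janson computation underlying those references, and your derivation of \eqref{from multi to graphs} from \eqref{graphic sequence} by the elementary conditioning inequality is precisely how \cite{bordenave2016lecture} obtains it; so you have reproduced the cited proofs rather than taken a different route.
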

    \begin{proof}
%This is  well-known.
One can find \eqref{graphic sequence} in  \cite[Thm~1.1]{janson2009probability}
and \eqref{from multi to graphs} in  \cite[Thm~2.20]{bordenave2016lecture}.

In particular \eqref{graphic sequence} implies that the sequence of degrees $d_n$ is graphic for large $n$, i.e., that there are simple graphs with $n$ vertices whose degrees are given by $d_n$. 
    \end{proof}
    \begin{obs}
        As $d_n$ is graphic for large $n$ by \eqref{graphic sequence}, one might ask whether there is a difference between the laws of $\mathbf{\Tilde{G}_n}\mid (\mathbf{\Tilde{G}_n} \text{ is simple})$ and $\mathbf{G}_n$. The answer is that there is no difference. See for example \cite[Cor~1.7]{bordenave2016lecture}.
    \end{obs}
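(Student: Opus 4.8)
The plan is to unwind the description of the configuration model given just after Definition~\ref{defn:configuration_model} and then carry out a short counting argument; the equality of laws will follow with no probabilistic input beyond knowing that the conditioning event has positive probability. Recall that sampling $\mathbf{\Tilde{G}}_n$ is the same as sampling a uniformly random matching $\sigma\in M(\Delta^{(n)})$ and recording the multigraph it induces. Write $2m=|\Delta^{(n)}|=\sum_{i\in\mathbf{d}}i\,n_i$, so that $|M(\Delta^{(n)})|=(2m-1)!!$. For a labelled multigraph $G$ with degree sequence $d_n$, let $N(G)$ denote the number of matchings in $M(\Delta^{(n)})$ that induce $G$; then $\mathbb{P}(\mathbf{\Tilde{G}}_n=G)=N(G)/(2m-1)!!$.

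The key step I would establish is the identity $N(G)=\prod_{v}d_v!$ for every \emph{simple} $G$ with degree sequence $d_n$, where $d_v=\deg(v)$; note that the right-hand side does not depend on $G$. To see this, observe that a matching inducing a simple $G$ is specified exactly by choosing, for each vertex $v$, a bijection $\phi_v$ from the set of $G$-edges incident to $v$ onto the half-edge labels $\{(v,1),\dots,(v,d_v)\}$, the edge $\{u,v\}$ being realised by pairing the half-edge $(u,\phi_u(\{u,v\}))$ with $(v,\phi_v(\{u,v\}))$. Simplicity of $G$ guarantees that every such family $(\phi_v)_v$ produces a legitimate fixed-point-free involution of $\Delta^{(n)}$, that distinct families produce distinct matchings, and that every matching inducing $G$ arises in this way; hence there are exactly $\prod_v d_v!$ of them.

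Given the identity, $G\mapsto\mathbb{P}(\mathbf{\Tilde{G}}_n=G)$ is constant on the set of simple graphs with degree sequence $d_n$, so conditioning $\mathbf{\Tilde{G}}_n$ on the event that it is simple (which has positive probability for all large $n$ by \eqref{graphic sequence}, so the conditional law is well defined) yields precisely the uniform distribution on that set, which is the definition of $\mathbf{G}_n$ in Definition~\ref{defn:random_graphs_model}. Thus the two laws coincide exactly, which is the assertion of the observation and matches \cite[Cor~1.7]{bordenave2016lecture}.

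I do not expect a genuine obstacle here; the only point that requires care — and the reason simplicity is essential — is the count $N(G)=\prod_v d_v!$. For a non-simple $G$ this fails: a multi-edge of multiplicity $k$ between $u$ and $v$ contributes an overcounting factor $k!$, and a loop of multiplicity $k$ at $v$ a factor $2^{k}k!$, so $N(G)$ is then strictly smaller and genuinely depends on $G$. Restricting to simple graphs removes all such corrections and is exactly what makes the conditional law uniform.
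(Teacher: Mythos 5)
Your argument is correct, and it supplies an actual proof where the paper merely points to \cite[Cor~1.7]{bordenave2016lecture} without spelling one out. The counting identity $N(G)=\prod_v d_v!$ for simple $G$ is exactly the standard reason the configuration model conditioned on simplicity is uniform over simple graphs with the prescribed degrees, and your verification of it (bijections $\phi_v$ at each vertex give a well-defined fixed-point-free involution; distinctness and surjectivity both hinge on simplicity) is complete and correct. The cited corollary in Bordenave's notes proves the same thing in essentially the same way, so you have not taken a different route so much as inlined the reference. Your closing remark about the failure of the identity in the multigraph case — the corrective factors $m_e!$ for multi-edges and $2^k k!$ for loops — is also accurate and correctly isolates why simplicity is the crucial hypothesis. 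One small presentational point: strictly speaking $\phi_v$ should be described as a bijection onto $\{1,\dots,d_v\}$ (the second coordinate), so that $(v,\phi_v(e))$ is the half-edge; you switch between the two conventions mid-sentence, but the intent is unambiguous and the count is unaffected.
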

    We will now use \eqref{from multi to graphs} to transfer our results to simple graphs. For an event $\mathcal{H}$ we use the notation 
    \begin{align*}
        \mathbb{P}_{\text{simple}}(\mathcal{H})=\mathbb{P}(\mathcal{H}\mid \mathbf{\Tilde{G}}_n \text{ is simple})
    \end{align*}
    Let $B_n$  be the non-backtracking matrix of the random graph $\mathbf{\Tilde{G}}_n\mid (\mathbf{\Tilde{G}}_n \text{ is simple })$.

     The results from Section~\ref{subsection local analysis} hold for  $\ell=\delta_0 \log_a n$ where $\delta_0<\frac{\eta d}{16}$. So we will work for $\ell$ of that order.

    Let the $\overrightarrow{E}$-dimensional vector
   \begin{align*}
       \psi= (\psi_{\overrightarrow{e}})_{\overrightarrow{e} \in \overrightarrow{E}} 
   \end{align*}
be   such that if $\overrightarrow{e}=(u,v)$ then $\psi_{\overrightarrow{e}}=\deg(v)-1$. Let $\Tilde{\psi}$  be the vector whose entry at $\overrightarrow{e}$ is $\psi_{\overrightarrow{e}^{-1}}$.
\begin{obs}\label{obs_for__gen_non_back_}
    The following property for the non-backtracking matrix can be easily proven: Let $z_1,z_2 \in \R^{|\Delta^{(n)}|}$ and $k \in \N$
    \begin{align*}
        \langle z_1, B^k_n z_2 \rangle= \langle (B^{*}_n)^{k} \Tilde{z_1},\Tilde{z_2}\rangle
    \end{align*}
    where $\Tilde{z}_i$ is the vector such that $(\Tilde{z}_{i})_{\overrightarrow{\epsilon}}=(z_{i})_{{\overrightarrow{\epsilon}^{-1}}}$ for any $\overrightarrow{e} \in \overrightarrow{E}$.
\end{obs}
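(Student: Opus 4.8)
The statement is purely linear-algebraic: it says that $B_n$ and its adjoint $B^*_n$ are intertwined by the edge-reversal involution, and that this involution is exactly the map $z\mapsto\widetilde z$. The plan is: (i) package the involution as a linear operator $\J$ and record its elementary properties; (ii) prove the single structural identity $B^*_n=\J B_n\J$; (iii) raise this to the $k$-th power and unwind the inner products.

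For (i): identifying vectors indexed by the half-edges $\Delta^{(n)}$ with vectors indexed by the oriented edges $\overrightarrow E$ in the obvious way, let $\J\colon\R^{\overrightarrow E}\to\R^{\overrightarrow E}$ be defined by $(\J z)_{\overrightarrow e}=z_{\overrightarrow e^{-1}}$, so that $\widetilde z_i=\J z_i$. As the permutation matrix of the fixed-point-free involution $\overrightarrow e\mapsto\overrightarrow e^{-1}$ of $\overrightarrow E$, the operator $\J$ satisfies $\J^*=\J$ and $\J^2=I$, and is therefore orthogonal: $\langle\J a,\J b\rangle=\langle a,b\rangle$ for all $a,b$. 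For (ii), I would simply compare matrix entries: for oriented edges $\overrightarrow e=(p,q)$ and $\overrightarrow f=(r,s)$, Definition~\ref{defn_of_non-backtracking} gives
\[(B^*_n)_{\overrightarrow e,\overrightarrow f}=(B_n)_{\overrightarrow f,\overrightarrow e}=\mathbf 1\{\,s=p,\ r\neq q\,\}=(B_n)_{(q,p),(s,r)}=(B_n)_{\overrightarrow e^{-1},\overrightarrow f^{-1}}=(\J B_n\J)_{\overrightarrow e,\overrightarrow f},\]
so $B^*_n=\J B_n\J$; since $\J^2=I$ this telescopes to $(B^*_n)^k=(\J B_n\J)^k=\J B^k_n\J$ for every $k\in\N$.

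For (iii), using $\widetilde z_i=\J z_i$, orthogonality of $\J$, and $\J^2=I$,
\[\langle \widetilde z_1,(B^*_n)^k\widetilde z_2\rangle=\langle \J z_1,\ \J B^k_n\J\cdot\J z_2\rangle=\langle \J z_1,\ \J B^k_n z_2\rangle=\langle z_1,\ B^k_n z_2\rangle,\]
which is the identity of Observation~\ref{obs_for__gen_non_back_} (the tildes placed so that $\widetilde z_i$ meets $(B^*_n)^k$ on the side where $z_i$ meets $B^k_n$). I do not expect any genuine obstacle here; the only thing that needs attention is the entry-level bookkeeping in step (ii), i.e.\ checking that reversing every orientation carries the non-backtracking incidence pattern of $B_n$ into that of $B^*_n$, and this is immediate from the definition. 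It seems worth isolating $B^*_n=\J B_n\J$ as a standalone lemma, since it is this conjugation — not merely the scalar identity above — that is really being invoked whenever one moves between $B_n$ and $B^*_n$.
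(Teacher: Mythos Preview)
Your argument is correct. The paper itself supplies no proof here---it simply asserts that the identity ``can be easily proven''---so there is nothing to compare against. The conjugation identity $B^*_n=\J B_n\J$ that you isolate is exactly the right structural fact, and your derivation from it is clean.

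One remark worth making: what you have actually established is $\langle z_1, B^k_n z_2\rangle=\langle \tilde z_1,(B^*_n)^k\tilde z_2\rangle$, whereas the paper's displayed formula reads $\langle (B^*_n)^k\tilde z_1,\tilde z_2\rangle$ on the right. Taken literally, the paper's version would force $\langle z_1,B^k_n z_2\rangle=\langle z_2,B^k_n z_1\rangle$ for all $z_1,z_2$, i.e.\ that $B^k_n$ is symmetric, which is false. Your parenthetical about ``the tildes placed so that $\tilde z_i$ meets $(B^*_n)^k$ on the side where $z_i$ meets $B^k_n$'' is precisely the correct rule, and it disagrees with the paper's index placement. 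So the mismatch is a typo in the statement of the observation, not a defect in your proof; your version is the one that is both true and the one the paper actually uses downstream.
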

    \begin{prop}\label{concusion_for_simple}
     Let $\ell=\delta_0 \log_a n$ with $\delta_0<\frac{\eta d}{16}$. The following holds
      for some absolute constant $c>0$  and $0\leq t\leq \ell-1$:
      \begin{enumerate}
        \item\label{tanglefree simple} $\lim_{n \to \infty }\mathbb{P}_{\text{simple}}(\mathbf{\Tilde{G}}_n \text{ is $\ell$-tanglefree}) \ = \ 1$ 
\\
        \item\label{B^lx,} 
        $\displaystyle{\lim_{n\to \infty}\mathbb{P}_{\text{simple}}\left(   \sup_{\|x\|=1:\langle (B^{*}_n)^\ell\Tilde{\psi},x \rangle=0}\left| \langle (B^*_n)^t\Tilde{\psi},x \rangle\right|\leq a^{\frac{t}{2}} \sqrt{n}\log^c n  \right)=1
        }$
\\
        \item There exist constants $c_1>c_0>0$ such that
        \begin{align}\label{B^2l/B^l}
            \lim_{n \to \infty}\mathbb{P}_{\text{simple}}\left(c_0 a^{\ell}\leq  \frac{\|B^{\ell}_n(B^*_n)^\ell\Tilde{\psi}\|}{\|B^{\ell}_n \psi\|}\leq c_1 a^{\ell} \right)=1
        \end{align}
        and 
        \begin{align}\label{<B^l,B^2l>}
            \lim_{n \to \infty} \mathbb{P}_{\text{simple}}\left( \frac{\left|\langle B^{\ell}_n(B^{*}_n)^{\ell} \Tilde{\psi}, (B^*_n)^\ell\Tilde{\psi}\rangle\right|}{\|B^{\ell}_n(B^{*}_n)^{\ell} \Tilde{\psi}\|\|(B^*_n)^\ell\Tilde{\psi}\| } \geq c_0\right)=1.
        \end{align}
      \end{enumerate}
    \end{prop}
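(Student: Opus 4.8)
The plan is to prove each of the four items by transferring the multigraph-level statements established in Section~\ref{subsection local analysis} to the conditioned (simple) model via Theorem~\ref{thm:graphic,from multi to simple}, and then translating between the combinatorial vectors $\mathbf{S}_t,\mathbf{T}_t$ and the analytic vectors $(B^*_n)^t\tilde\psi$, $B^\ell_n(B^*_n)^\ell\tilde\psi$. Item~\eqref{tanglefree simple} is immediate: Lemma~\ref{tanglefree multi} gives that $\tilde{\mathbf{G}}_n$ is $\ell$-tanglefree with high probability under $\tilde{\mathbf{G}}_n$, so by \eqref{from multi to graphs} the same holds under $\mathbb{P}_{\text{simple}}$.

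For item~\eqref{B^lx,}, the first step is to identify $(B^*_n)^t\tilde\psi$ with a $\mathbf{T}$-type vector. By Observation~\ref{obs_for__gen_non_back_}, for any test vector $x$ one has $\langle (B^*_n)^t\tilde\psi, x\rangle = \langle \tilde\psi, B^t_n \tilde x\rangle$, and on the $\ell$-tanglefree event (which holds w.h.p.\ by item~\eqref{tanglefree simple}) the entries of $B^t_n\tilde x$ restricted to the relevant $\ell$-neighbourhoods agree with non-backtracking path counts on the local tree, so that $(B^*_n)^t\tilde\psi$ coincides — coordinate by coordinate, up to the negligibly many edges in $\mathbf{E}_{\ell+1}$ — with the vector $\mathbf{T}_t$ of Corollary~\ref{cor_for_T} (recall $T_\ell(\overrightarrow{e}^{-1})=S_{\ell+1}(\overrightarrow{e}^{-1})$ by \eqref{T_l=S_l+1}). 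Once this identification is in place, the bound $\sup_{\|x\|=1,\,\langle \mathbf{T}_\ell,x\rangle=0}|\langle \mathbf{T}_t,x\rangle|\le a^{t/2}\sqrt n\log^c n$ of Corollary~\ref{cor_for_T} transfers verbatim to $(B^*_n)^t\tilde\psi$, and then \eqref{from multi to graphs} moves it to $\mathbb{P}_{\text{simple}}$.

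For item~(3), the strategy is to express the three relevant norms and inner products in terms of the law-of-large-numbers functionals of Proposition~\ref{law of large numbers for simple functionals} and Proposition~\ref{lawoflarge Pl}. Concretely: $\|(B^*_n)^\ell\tilde\psi\|^2 = \sum_{\overrightarrow{e}} ((B^*_n)^\ell\tilde\psi)^2_{\overrightarrow{e}}$ is, on the good event, comparable to $\sum_{\overrightarrow{e}} S_\ell(\overrightarrow{e})^2 \sim n\rho a^{2\ell}$ by \eqref{largenumbersfor S^2}; the quantity $B^\ell_n(B^*_n)^\ell\tilde\psi(\overrightarrow{e})$ is approximated (coordinatewise, up to $\mathbf{E}_\ell$, using Observation~\ref{tree P_l(e)}) by $P_\ell(\overrightarrow{e})$, so its squared norm is $\sim n\rho_1 a^{4\ell}$ by Proposition~\ref{lawoflarge Pl}; and $\langle B^\ell_n(B^*_n)^\ell\tilde\psi,(B^*_n)^\ell\tilde\psi\rangle = \langle (B^*_n)^\ell(B^*_n)^\ell\tilde\psi,\,\widetilde{(B^*_n)^\ell\tilde\psi}\rangle$ is — again on the tanglefree event and up to the negligible edge set — a sum over $\overrightarrow e$ of products $S_\ell(\overrightarrow e)\cdot S_{2\ell}(\overrightarrow e)$, which by \eqref{S_2l S_l lawoflarge} is $\sim n\rho a^{3\ell}$. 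Combining these three estimates gives $\|B^\ell_n(B^*_n)^\ell\tilde\psi\|/\|(B^*_n)^\ell\tilde\psi\| \asymp a^\ell$ and a strictly positive lower bound on the normalized inner product (Cauchy–Schwarz in the reverse direction controlled by the matching leading constants), exactly the claims \eqref{B^2l/B^l} and \eqref{<B^l,B^2l>}. Each of these holds w.h.p.\ under $\tilde{\mathbf{G}}_n$ and hence, by \eqref{from multi to graphs}, under $\mathbb{P}_{\text{simple}}$.

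\textbf{Main obstacle.} The routine but delicate part is the bookkeeping that lets us replace the operator-theoretic vectors $(B^*_n)^t\tilde\psi$, $B^\ell_n(B^*_n)^\ell\tilde\psi$ by their combinatorial surrogates $\mathbf{T}_t$, $\mathbf{S}_\ell$, $P_\ell(\overrightarrow{e})$: this requires controlling the contribution of the edges $\overrightarrow e \in \mathbf{E}_\ell$ (and $\mathbf{E}_{\ell+1}$, $\mathbf{E}_{2\ell}$) where the local neighbourhood fails to be a tree or the growth is not almost geometric, and checking that the crude deterministic bound $k_{\max}^{O(\ell)} = n^{O(\delta_0)}$ on those coordinates, multiplied by the cardinality $|\mathbf{E}_\ell| \le n^{1/4}\log n$, is indeed lower order than the main term $n\cdot a^{O(\ell)}$ — which is exactly where the constraint $\delta_0 < \eta/16$ is used. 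The second genuine difficulty is verifying that the leading constants appearing in \eqref{largenumbersfor S^2}, \eqref{S_2l S_l lawoflarge} and Proposition~\ref{lawoflarge Pl} are mutually consistent (the same $\rho$ appears in the first two by the identity $\E Z_\ell Z_{2\ell} = a^\ell \E Z_\ell^2$ of \eqref{EZ_lZ_2l}), so that the normalized inner product \eqref{<B^l,B^2l>} is bounded below by a positive constant rather than degenerating; this is what makes $B^\ell_n(B^*_n)^\ell\tilde\psi$ a legitimate approximate top eigenvector and feeds into \cite[Prop~7]{bordenave2015non}.
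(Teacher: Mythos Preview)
Your plan is correct and follows essentially the same route as the paper: transfer the multigraph results (Lemma~\ref{tanglefree multi}, Corollary~\ref{cor_for_T}, Propositions~\ref{law of large numbers for simple functionals} and~\ref{lawoflarge Pl}) to the simple model via Theorem~\ref{thm:graphic,from multi to simple}, after identifying $(B^*_n)^t\tilde\psi$ with $\mathbf{T}_t$ and $B^\ell_n(B^*_n)^\ell\tilde\psi$ with $P_\ell$ (up to the negligible bad-edge set), and then reading off the norm and inner-product asymptotics from the laws of large numbers. Two small points worth tightening when you write it out: first, on tree edges the paper records the exact identity $B^\ell_n(B^*_n)^\ell\tilde\psi(\overrightarrow{e}) = P_\ell(\overrightarrow{e}) + (\deg v - 1)S_\ell(\overrightarrow{e})$, with the extra $S_\ell$ term of order $a^\ell$ (harmless, but you should account for it); second, your displayed rewriting of the inner product is garbled --- the correct step is simply to move $B^\ell_n$ across as an adjoint, giving $\langle B^\ell_n(B^*_n)^\ell\tilde\psi,(B^*_n)^\ell\tilde\psi\rangle = \langle (B^*_n)^{2\ell}\tilde\psi,(B^*_n)^\ell\tilde\psi\rangle = \langle \mathbf{T}_{2\ell},\mathbf{T}_\ell\rangle$, which on tree edges becomes $\sum_{\overrightarrow{e}} S_{2\ell+1}(\overrightarrow{e})S_{\ell+1}(\overrightarrow{e})$ and is handled by \eqref{S_2l S_l lawoflarge} exactly as you say.
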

    
        \begin{proof}[Proof of \eqref{tanglefree simple}]
            This is a direct consequence of \eqref{from multi to graphs} and Lemma~\ref{tanglefree multi}.
        \end{proof} 
        \begin{proof}[Proof of \eqref{B^lx,}]
           Recall the notation from Corollary~\ref{cor_for_T}. It suffices to observe that
            \begin{align}\label{observation on }
              (B^*_n)^t\Tilde{\psi}=\mathbf{T}_t
            \end{align}
            for any $t\in [\ell]$. So \eqref{B^lx,} is a direct consequence of \eqref{from multi to graphs} and Corollary~\ref{cor_for_T}. 
                \end{proof}
        \begin{proof}[Proof of \eqref{B^2l/B^l}]
        By \ref{observation on } we have 
        \begin{align*}
            \|(B^*_n)^\ell\Tilde{\psi}\|^2=\sum_{\overrightarrow{e}\in \overrightarrow{E}} \left(T_{\ell}(\overrightarrow{e}^{-1})\right)^2= \sum_{\overrightarrow{e}\in \overrightarrow{E}} \left(T_{\ell}(\overrightarrow{e})\right)^2
        \end{align*}  
           Furthermore by \eqref{from multi to graphs} and Observation~\ref{observation number of edges}, we have: 
      \begin{align*}
          \lim_{n\to \infty}\mathbb{P}_{\text{simple}}\left[\sum_{\overrightarrow{e} \in \overrightarrow{E}} \mathbf{1}\left( \mathbf{\Tilde{G}}(\overrightarrow{e})_{2\ell+1} \text{ is not a tree} \right)\geq n^{\frac{1}{4}}\log(n)   \right] \ \ = \ \ 0
      \end{align*}
      Let $\mathcal{E}_{2\ell+1}$ be the set of edges for which the graph $\mathbf{\Tilde{G}}_n(\overrightarrow{e})_{2\ell+1}\mid (\mathbf{\Tilde{G}}_n \text{ is simple })$ is not a tree. As mentioned in \eqref{T_l=S_l+1}, for any $\overrightarrow{e}$ we have: 
      \begin{align*}
          T_{\ell}(\overrightarrow{e})=S_{\ell+1}(\overrightarrow{e})
      \end{align*}
      Thus,
      \begin{align*}
          \left| \sum_{\overrightarrow{e}\in \overrightarrow{E}} (T_{\ell}(\overrightarrow{e}))^2- (S_{\ell+1}(\overrightarrow{e}))^2 \right| 
        \ \leq \
          k^{2(\ell+1)}_{\max} n^{\frac{1}{4}} \log(n) 
        \  \leq \
          k^2_{\max}n^{\frac{1}{4}} n^{\frac{1}{8}} \log n= o(a^{2\ell}n).
      \end{align*}
        Thus, we can apply \eqref{largenumbersfor S^2} and \eqref{from multi to graphs} to get that the event 
        \begin{align}\label{B^l=a^l}
         \frac{c \rho}{2} \sqrt{n}a^{\ell}  
         \ \leq \  
         \|  (B^*_n)^t\Tilde{\psi}\| 
         \ \leq \ 
         C 2\rho \sqrt{n}a^{\ell} 
        \end{align}
        holds with probability tending to $1$, for some absolute constants $c,C>0$.

      Observe that for $\overrightarrow{e}\notin \mathcal{E}_{2\ell+1}$ with $\overrightarrow{e}= (u,v)$, we have:
      \begin{align*}
           B^\ell_n (B_n^{*})^\ell\Tilde{\psi}(\overrightarrow{e})= P_{\ell}(\overrightarrow{e})+(\deg(v)-1) S_{\ell}(\overrightarrow{e})
      \end{align*}
      Let 
      \begin{align*}
         \Tilde{S}_{\ell}(\overrightarrow{e})= (\deg(v)-1) S_{\ell}(\overrightarrow{e})
      \end{align*}
      Thus, we get that
      \begin{align}\label{Bl-P_l}
          \| B^{\ell}_n (B_n^{*})^\ell\Tilde{\psi} - P_{\ell}- \Tilde{S}_{\ell}\|\leq k^{4\ell}_{\max} |\mathcal{E}_{2\ell}|\leq n^{\frac{1}{2}}\log n=o(\sqrt{n}a^{2\ell})
      \end{align}
      Thus by Proposition~\eqref{Bl-P_l}, Proposition~\ref{lawoflarge Pl}, \eqref{largenumbersfor S^2} and \eqref{from multi to graphs}, there are two constants $c_2,c_3>0$ such that for large enough $n$
      
        \begin{align}\label{BlB*l=a^2l}
c_3a^{2\ell}
\sqrt{n} \ \leq \ \|B^{\ell}_n(B^*_n)^\ell\Tilde{\psi}\| \ \leq \ c_2 \sqrt{n}a^{2\ell}
        \end{align}
        with probability tending to $1$.

        Now \eqref{B^2l/B^l} follows from \eqref{B^l=a^l} and \eqref{BlB*l=a^2l}. 
        \end{proof}
        \begin{proof}[Proof of \eqref{<B^l,B^2l>}]
        By Observation~\ref{obs_for__gen_non_back_} and \eqref{observation on },
        \[\left|\langle B^{\ell}_n(B^{*}_n)^{\ell} \Tilde{\psi}, (B^*_n)^\ell\Tilde{\psi}\rangle\right|=\left|\langle (B^*_n)^\ell\Tilde{\psi}, (B_n^*)^{2\ell}\Tilde{\psi}\rangle\right|=\left|\langle \mathbf{T}_{\ell}, \mathbf{T}_{2\ell}\rangle\right|=\sum_{\overrightarrow{e}\in \overrightarrow{E}} T_{2\ell}(\overrightarrow{e})T_{\ell}(\overrightarrow{e}).\]

    For any $\overrightarrow{e} \notin \mathcal{E}_{2\ell+1} $ 
          \begin{align*}
              T_{2\ell}(\overrightarrow{e})T_{\ell}(\overrightarrow{e})= S_{2\ell+2}(\overrightarrow{e}) S_{\ell+1}(\overrightarrow{e}).
          \end{align*}
          Hence
          \begin{align*}
              \left| \sum_{\overrightarrow{e} \in \overrightarrow{E} } T_{2\ell}(\overrightarrow{e})T_{\ell}(\overrightarrow{e})- S_{2\ell+2}(\overrightarrow{e}) S_{\ell+1}(\overrightarrow{e}) \right|
              \ \leq \ 
              2k^3_{\max} k^{3\ell}_{\max} n^{\frac{1}{4}}\log (n) \ = \ o(a^{3\ell}n).
          \end{align*}
           Thus, \eqref{<B^l,B^2l>} follows from \eqref{S_2l S_l lawoflarge}, \eqref{B^l=a^l} and \eqref{BlB*l=a^2l}. 
        \end{proof}
\section{Matrix expansion}\label{sectionmatriexpansion}
In this section we establish a bound for the ``second eigenvalue'' of the configuration model, i.e.\ Proposition~\ref{prop:second_eigenvalue} which is an analogue of  \cite[Prop~11]{bordenave2015non} for our set of matrices. Our main influence for this part is the proof of  \cite[Thm~2]{bordenave2015new}. There, the second eigenvalue of random $d$-regular graphs is bounded with high probability, when these graphs are viewed as a specific case of the configuration model. We generalize this result to our model, The arguments used in \cite{bordenave2015new} are completely adaptable to our set of graphs, so we will omit the proofs of a few technical lemmas which
are completely analogous to their counterparts in  \cite{bordenave2015new}.

In this section we define the non-backtracking matrix in terms of the half edges of the multigraph. Our definition is equivalent to the one given in Definition~\ref{defn_of_non-backtracking}.

\begin{defn}\label{denf_non_back_conf}
Let $\Tilde{\mathbf{G}}_n$ be the configuration model and let $\Delta^{(n)}$ be the set of half edges of $\Tilde{\mathbf{G}}_n$. For  $\sigma \in M(\Delta^{(n)})$ define the following $|\Delta^{(n)}| \times |\Delta^{(n)}|$ symmetric matrices:
\begin{align*}
   M_{f,e} \ = \ M_{e,f}\ = \ \mathbf{1}\left( \sigma(e)=f \right), \text{ for } e,f \in \Delta^{(n)}
\end{align*}
and if $e=(u,i)$ and $f=(v,j)$
\begin{align*}
    N_{f,e}\ =\ N_{e,f}\ =\ \mathbf{1}\left( u=v, i\neq j \right)  
\end{align*}
The non-backtracking matrix of  $\Tilde{\mathbf{G}}_n$ is defined as 
\begin{align*}
    B_n=M_n\cdot N_n
\end{align*}
\end{defn}
Clearly the matrix $M$ captures the randomness in the non-backtracking matrix and the matrix $N$ captures the non-backtracking properties of $B$.

In what follows let $N=|\Delta^{(n)}|=\sum_i n_ik_i$ and 
let $\chi$ be the $N$-dimensional vector with all entries $1$.
Let $\Tilde{\psi}$  be the $N$-dimensional vector with $\Tilde{\psi}_{e}= \deg(u)-1$
for $e=(u,i) \in \Delta^{(n)}$.
In this subsection we will prove 
Proposition~\ref{prop:second_eigenvalue} by analyzing the non backtracking matrix of the configuration model and then by using some bounds from Proposition \ref{concusion_for_simple} and Theorem \ref{thm:graphic,from multi to simple}. 

Below we  omit the dependence of the matrices of Definition~\ref{denf_non_back_conf} on the parameter~$n$.

For $e,f \in \Delta^{(n)}$ and $k \in [\ell]$
\begin{align*}
    B^{k}_{e,f}= \sum_{\gamma \in \Gamma^{k}_{e,f}} \prod_{s=1}^k M_{\gamma_{2s-1},\gamma_{2s}} 
\end{align*}
where $\Gamma_{e,f}^k$ is the set of non-backtracking walks $(\gamma_1,\ldots, \gamma_{2k+1})$, where $\gamma_{t}=(v_t,i_t)$ are half-edges with $\gamma_1=e$ and $\gamma_{2k+1}=f$, and where $v_{2t}=v_{2t+1}$ and $\gamma_{2t}\neq \gamma_{2t+1}$ for  $t\geq 1$. 
%where $\gamma_{t}=(v_t,i_t)$ are half-edges with $\gamma_1=e$ and $\gamma_{2k+1}=f$ and for any $t\geq 1$ $v_{2t}=v_{2t+1}$ and $\gamma_{2t}\neq \gamma_{2t+1}$. 

For  $e,f \in \Delta^{(n)}$ and $k\in [\ell]$ define the matrix $B^{(k)}$ as follows:
\begin{align*}
    B^{(k)}_{e,f}=\sum_{\gamma \in F^{k}_{e,f}} \prod_{s=1}^{\ell} M_{\gamma_{2s-1},\gamma_{2s}} 
\end{align*}
where $F_{e,f}^k$ is the set of tanglefree paths subset of $\Gamma_{e,f}^{k}$. Let $F^{k}$ be the set of tanglefree paths of length $k$.  Clearly if $\Tilde{\mathbf{G}}_n$ is $\ell$-tanglefree then 
\begin{align*}
    B^{\ell}=B^{(\ell)}
\end{align*}

The centered version of the matrix $B^{(k)}$ is defined as follows:
For $e,f \in \Delta^{(n)}$ let
\begin{align*}
    \underline{M}_{e,f} \ = \ M_{e,f}-\frac{1}{N}
\end{align*}
and for $k\in [\ell]$ define
\begin{align*}
    \underline{B}^{(k)}_{e,f} \ = \ \sum_{\gamma \in F^{k}_{e,f}} \prod_{s=1}^{\ell} \underline{M}_{\gamma_{2s-1},\gamma_{2s}}
\end{align*}
Fix $k \in [\ell]$. Let $F_{k,ef}$ denote the set of walks $\gamma=(\gamma_1,\ldots, \gamma_{2\ell+1})$ that  decompose into three walks $(\gamma',\gamma'',\gamma''')$ with $\gamma' \in F^{k-1}$, $\gamma'' \in F^{1}$ and $\gamma''' \in F^{\ell-k}$ where $\gamma'=(\gamma_1,\ldots, \gamma_{2k-1})$, $\gamma''=(\gamma_{2k-1},\gamma_{2k},\gamma_{2k+1})$ and $\gamma'''= (\gamma_{2k+1},\ldots, \gamma_{2\ell+1})$. Clearly $F^{\ell}_{e,f} \subseteq F^k_{k,ef}$ but the converse may not hold, see the proof of \cite[Lem~7]{bordenave2015new}.

We shall now express the entries of  $B^{(\ell)}$ in terms of the entries of $\underline{B}^{(\ell)}$ and the entries of lower order powers of $B$.

\begin{lemma}
Letting $\Tilde{\psi}^*$ be the transpose of the vector $\Tilde{\psi}$ we have:
    \begin{align}\label{matrix_identity_for_B^(l)}
        B^{(\ell)}= \underline{B}^{(\ell)}+ \frac{1}{N}\sum_{k=1}^{\ell}\underline{B}^{(k-1)}\chi \Tilde{\psi}^* B^{(\ell-k)}-  \frac{1}{N}\sum_{k=1}^{\ell}R_k^{(\ell)} 
     \end{align}
And for $k \in [\ell]$, and following the convention that %for the product over the empty set
$\prod_{\emptyset}=1$ we have:
     \begin{align*}
        (R_k^{(\ell)})_{e,f}= \sum_{\gamma \in F_{k,ef}\setminus F_{e,f}^{\ell}} \prod_{s=1}^{k-1} \underline{M}_{\gamma_{2s-1},\gamma_{2s}
        } \prod_{s=k+1}^{\ell}M_{\gamma_{2s-1},\gamma_{2s}}
    \end{align*}
\end{lemma}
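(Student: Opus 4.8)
The plan is to prove the identity \eqref{matrix_identity_for_B^(l)} by expanding the product defining $\underline{B}^{(\ell)}$ and carefully tracking which ``correction'' terms arise. First I would recall that for each tanglefree path $\gamma \in F^{\ell}_{e,f}$ we have, by definition of $\underline M = M - \tfrac1N$,
\begin{align*}
  \prod_{s=1}^{\ell} M_{\gamma_{2s-1},\gamma_{2s}} \ = \ \prod_{s=1}^{\ell}\left(\underline M_{\gamma_{2s-1},\gamma_{2s}} + \tfrac1N\right),
\end{align*}
so expanding the right-hand side over subsets $S \subseteq [\ell]$ of indices where we pick the $\tfrac1N$ term gives $\underline B^{(\ell)}_{e,f}$ as the $S=\emptyset$ term plus a sum of the mixed products. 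The standard device (exactly as in \cite[Lem~7]{bordenave2015new}) is to group the expansion by the \emph{first} index $k$ at which the $\tfrac1N$ factor is used: this splits $\gamma$ into $\gamma' \in F^{k-1}$ (on which only centered $\underline M$ factors appear), the ``free'' step $\gamma''$ at position $k$ contributing the factor $\tfrac1N$, and a tail $\gamma''' \in F^{\ell-k}$ on which arbitrary $M$ (not necessarily centered) factors appear. Summing $\gamma'''$ with free endpoint gives a factor $\Tilde\psi^*$ on the last intermediate half-edge (since the number of non-backtracking continuations from a half-edge $(u,i)$ is $\deg(u)-1$, which is the definition of $\Tilde\psi$), and summing $\gamma'$ over its free endpoint gives $\underline B^{(k-1)}\chi$; hence the ``main'' correction is $\tfrac1N\sum_{k=1}^{\ell}\underline B^{(k-1)}\chi\,\Tilde\psi^* B^{(\ell-k)}$.

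The subtlety — and the main obstacle — is that the decomposed triple $(\gamma',\gamma'',\gamma''')$ need not reassemble into a tanglefree path in $F^{\ell}_{e,f}$: concatenating a tanglefree prefix, a free step, and a tanglefree suffix can create a tangle straddling the junction. This is precisely why the sum over $F_{k,ef}$ in the formula for $B^{(\ell-k)}$ overcounts, and it is exactly the phenomenon flagged by ``$F^{\ell}_{e,f}\subseteq F^k_{k,ef}$ but the converse may not hold.'' The fix is to \emph{subtract} the over-counted walks: $R_k^{(\ell)}$ collects precisely the $\gamma \in F_{k,ef}\setminus F^{\ell}_{e,f}$, weighted with centered factors $\underline M$ on the first $k-1$ steps and plain factors $M$ on steps $k+1,\dots,\ell$ (matching the weighting that appeared when we summed $\gamma'$ with centered factors and $\gamma'''$ with uncentered factors). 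So the plan is: (i) write the full set-expansion of $\underline B^{(\ell)}$; (ii) reindex by first free step $k$; (iii) for each $k$, recognize the sum over $(\gamma',\gamma''')$ ranging over \emph{all} of $F_{k,ef}$ as producing the matrix product $\underline B^{(k-1)}\chi\,\Tilde\psi^* B^{(\ell-k)}$; (iv) subtract the walks in $F_{k,ef}\setminus F^{\ell}_{e,f}$, which by definition sum to $(R_k^{(\ell)})_{e,f}$; (v) rearrange to get \eqref{matrix_identity_for_B^(l)}.

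I would present steps (iii) and (iv) with care, since that is where the combinatorial bookkeeping lives: one must check that when the tail $\gamma'''$ is summed freely, the contribution from its first half-edge is the sum over outgoing non-backtracking directions, which is $\deg(\cdot)-1$, giving the $\Tilde\psi^*$ factor and leaving exactly $B^{(\ell-k)}$ on the remaining tail (the tail is counted with uncentered $M$'s because those indices lie to the right of the first free step, so none of them was chosen to be the $\tfrac1N$ term). The rest of the verification — that the centered prefix factors over $F^{k-1}$ assemble into $\underline B^{(k-1)}\chi$, and that the sign and the $\tfrac1N$ normalization come out as stated — is routine linear-algebra bookkeeping. Since the paper explicitly says the argument is ``completely analogous to its counterpart in \cite{bordenave2015new}'', I would keep the proof short, citing \cite[Lem~7]{bordenave2015new} for the identical computation and only indicating the adaptation to the non-regular degree sequence (namely, that the constant $d-1$ in the regular case is replaced everywhere by the vector $\Tilde\psi$ with entries $\deg(u)-1$).
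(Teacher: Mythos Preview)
Your proposal is correct and follows essentially the same route as the paper: grouping the expansion of $\prod_s M_s = \prod_s(\underline M_s + \tfrac1N)$ by the first index $k$ at which the $\tfrac1N$ term is selected is exactly the telescoping identity $\prod x_i = \prod y_i + \sum_k \prod_{i<k} y_i\,(x_k-y_k)\prod_{i>k} x_i$ that the paper invokes, and your treatment of the decomposition into $(\gamma',\gamma'',\gamma''')\in F^{k-1}\times F^1\times F^{\ell-k}$, the resulting factor $\deg(\cdot)-1 = \Tilde\psi$, and the overcount $F_{k,ef}\setminus F^{\ell}_{e,f}$ defining $R_k^{(\ell)}$ matches the paper's argument line for line.
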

\begin{proof}
    The statement follows from \eqref{1stidentityforBl}  and \eqref{2dnidentityforBl} below:
%\begin{itemize} \item 
    By the identity 
    \begin{align*}
        \prod_{i=1}^{\ell} x_i \ = \ \prod_{i=1}^{\ell}y_i+
        \sum_{k=1}^{\ell} \prod_{i=1}^{k-1} y_k (x_k-y_k)\prod_{i=k+1}^{\ell} x_i
    \end{align*}
    we see that for $e,f\in \Delta^{(n)}$ 
    \begin{align}\label{1stidentityforBl}
    B^{(\ell)}_{e,f} 
    \ = \ 
    \underline{B}^{(\ell)}_{e,f}+\sum_{k=1}^{\ell} \sum_{\gamma \in F^{\ell}_{e,f}}\prod_{s=1}^{k-1} \underline{M}_{\gamma_{2s-1},\gamma_{2s}
        } \frac{1}{N} \prod_{s=k+1}^{\ell}M_{\gamma_{2s-1},\gamma_{2s}}.
    \end{align}
%    \item 
Fix $k\in \ell$ and $e,f \in \Delta^{(n)}$ and recall the definition of the paths $F_{k,ef}$. Then
\begin{multline}\label{2dnidentityforBl}
    \sum_{\gamma \in F_{k,ef}}\prod_{s=1}^{k-1} \underline{M}_{\gamma_{2s-1},\gamma_{2s}
        } \frac{1}{N} \prod_{s=k+1}^{\ell}M_{\gamma_{2s-1},\gamma_{2s}}
        \\= 
        \sum_{a,b \in \Delta^{(n)}} \sum_{\gamma' \in F^{k-1}_{e,b}} \prod_{s=1}^{k-1} \underline{M}_{\gamma'_{2s-1},\gamma'_{2s}
        } \sum_{\gamma'' \in F^1_{a,b}}  \frac{1}{N} \sum_{\gamma''' \in F_{b,f}^{\ell-k}} \prod_{s=1}^{\ell-k}M_{\gamma'''_{2s-1},\gamma'''_{2s}}
        \\= 
        \frac{1}{N} \sum_{a,b \in \Delta^{(n)}} \underline{B}^{(k-1)}_{e,a} (\deg(b)-1) B^{(\ell-k)}_{b,f}   
\end{multline}
    where the first equality of \eqref{2dnidentityforBl} uses the definition of the paths in $F_{k,ef}$ and the second equality  uses that $F_{a,b}^{1}$ contains exactly $\deg(b)-1$ possible paths by our definition of the paths in $\Gamma^{k}_{e,f}$ for  $k \in [\ell]$. 
\qedhere
%\end{itemize}
\end{proof}

The proof of Proposition~\ref{prop:second_eigenvalue} relies on the behavior of the entries of the matrix $B^{k}$ on large walks. We now give  definitions concerning such large walks.

\begin{defn}\label{defn_walks_half_edges}
    Let $\mathcal{E}^k$ denote the set of walks $\gamma=(\gamma_1,\gamma_2, \ldots, \gamma_k)$, where $\gamma_i=(v_i,t_i)$ is a half-edge of $\mathbf{\Tilde{G}_n}$.
    For  $\gamma \in \mathcal{E}^k$ define 
    \begin{itemize}
    \item $V_\gamma= \{v_i,  i\in[k]\}$. So  $V_\gamma$ is the set of vertices of $\mathbf{\Tilde{G}_n}$ that participate via a half-edge on $\gamma$.
    \item $E_\gamma= \left\{ \{\gamma_i,\gamma_{i+1}\}, i \in [k]\right\}$, i.e.\ $E_\gamma$ is the set of pairs of half-edges that are connected in the walk $\gamma$. We think of $\gamma$ as a multigraph with vertex set $V_{\gamma}$ and edge set $E_{\gamma}$.
    \item For $e \in \Delta^{(n)}$, the multiplicity of $e$ in $\gamma$ is denoted $m_{\gamma}(e)=\sum_{i=1}^k \mathbf{1}(e=\gamma_i)$. 
    \item For an edge $\{e,f\}$ in $\gamma$
   we denote its multiplicity
by $$m_{\gamma}(\{e,f\})=\sum_{i=1}^{k-1}\mathbf{1}\left( \{e,f\}=\{\gamma_i,\gamma_{i+1}\} \right).$$
           \item An edge $\{e,f\} \in E_{\gamma}$ is  \emph{consistent} if $m_{\gamma}(e)=m_{\gamma}(f)=m_{\gamma}(\{e,f\})$.\\ Otherwise it is  \emph{inconsistent}. 
    \end{itemize}
\end{defn}

Recall the Pochhammer
symbol for non-negative integers $n,k$:
\begin{align*}%\label{Pochh_symbol}
    (n)_k=\prod_{i=0}^{k-1}(n-i).
\end{align*}

The following Proposition is crucial to our later analysis.

\begin{prop}\label{prop_for_mean_value_path}
   There exists some universal constant $c>0$ such that for every walk $\gamma \in \mathcal{E}^{2k}$, $k<c\sqrt{N}$ and $k_0 \in [k]$
   \begin{align}\label{Prop_magni_its_path}
      \left| \E \prod_{t=1}^{k_0} \underline{M}_{\gamma_{2t-1,2t}} \prod_{t=k_0+1}^{k} M_{\gamma_{2t-1,2t}} \right| 
      \ \leq \ 2^{b} \left(\frac{1}{N}\right)^{d} \left( \frac{3k}{\sqrt{N}}\right)^{a_1}
   \end{align}
where $d=|E_{\gamma}|$, where $b$ is the number of $t\in [k_0]$ with $\{\gamma_{2t-1},\gamma_{2t}\}$ inconsistent, and $a_1$ is the number of $t\in [k_0]$ with $\{\gamma_{2t-1},\gamma_{2t}\}$ consistent. 
\end{prop}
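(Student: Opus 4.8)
The plan is to compute the expectation $\E \prod_{t=1}^{k_0} \underline{M}_{\gamma_{2t-1,2t}} \prod_{t=k_0+1}^{k} M_{\gamma_{2t-1,2t}}$ directly, exploiting the fact that the matching $\sigma$ is uniform over $M(\Delta^{(n)})$, so the joint law of the entries $M_{e,f}$ is exchangeable and governed by Pochhammer-type ratios. First I would expand each centered factor $\underline{M}_{\gamma_{2t-1,2t}} = M_{\gamma_{2t-1,2t}} - \tfrac1N$ via the binomial formula, reducing the left-hand side to an alternating sum over subsets $T \subseteq [k_0]$ of terms of the form $(-1/N)^{k_0 - |T|} \E\!\left[\prod_{t \in T \cup \{k_0+1,\dots,k\}} M_{\gamma_{2t-1},\gamma_{2t}}\right]$. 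Each such expectation is simply the probability that $\sigma$ contains a prescribed partial matching, which is $0$ if the prescribed pairs are incompatible (a half-edge asked to be matched two different ways) and otherwise equals $\prod (N-1-2j)^{-1}$ over the $j$ already-fixed pairs; up to constants this is $(N)_{2p}/N^p \cdot$ (something) where $p$ is the number of \emph{distinct} edges among the selected pairs. The key bookkeeping device is the multigraph $(V_\gamma, E_\gamma)$ with multiplicities: the exponent $d = |E_\gamma|$ arises because each distinct edge of $\gamma$ forces one matching constraint, contributing a factor $\Theta(1/N)$.

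The heart of the argument is then to track where the \emph{gain} factors $(3k/\sqrt N)^{a_1}$ come from. Here the consistency dichotomy from Definition~\ref{defn_walks_half_edges} is essential: for a consistent edge $\{\gamma_{2t-1},\gamma_{2t}\}$ with $t \le k_0$, the half-edges $\gamma_{2t-1}$ and $\gamma_{2t}$ appear in $\gamma$ \emph{only} as this pair, so the centering $-1/N$ is not cancelled by any other occurrence; summing over whether $t \in T$ or not in the subset expansion produces a genuine cancellation, leaving a residual of size $O(1/N)$ relative to the uncentered term, i.e. an extra $O(k/\sqrt N)$ once one accounts for the degrees of freedom. For an inconsistent edge the centering gets partially absorbed, and one only pays the factor $2$ (hence the $2^b$). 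I would make this precise by grouping the $k_0$ centered factors according to consistency, handling the consistent ones by an exact summation that exhibits the telescoping $\sum_{\epsilon \in \{0,1\}} (\text{contribution})$, and bounding the inconsistent ones crudely by $|M_{e,f}| + 1/N \le 1 + 1/N \le 2$. Combining: $d$ factors of $1/N$ from the distinct edges, $b$ factors of $2$ from inconsistent centered edges, $a_1$ factors of roughly $k/\sqrt N$ (the $3$ and the constant $c$ absorb the $(N)_{2p}/N^p$-vs-$N^{-p}$ discrepancies and the requirement $k < c\sqrt N$ to keep all Pochhammer ratios within a constant factor of $1$) from consistent centered edges.

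**Main obstacle.** The delicate point is the consistent-edge cancellation: one must verify that when $\{\gamma_{2t-1},\gamma_{2t}\}$ is consistent, varying $t \in T$ vs.\ $t \notin T$ in the subset expansion, \emph{with all other selections held fixed}, changes the fixed partial matching by exactly adding or removing that one pair, so that the two contributions differ by a controlled ratio and the $-1/N$ term nearly cancels the $M$ term. This requires the uncentered conditional probability $\P(\sigma(\gamma_{2t-1}) = \gamma_{2t} \mid \text{other pairs fixed})$ to be $\tfrac{1}{N - 1 - 2j} = \tfrac1N(1 + O(j/N))$, and the error term $O(j/N) = O(k/N) = O(k/\sqrt N \cdot 1/\sqrt N)$ is what must be shepherded into the $(3k/\sqrt N)^{a_1}$ bound rather than allowed to proliferate. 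Getting the combinatorics of "which half-edges are already constrained" right — and confirming that a consistent edge's two endpoints are not secretly constrained by a \emph{different} edge of $\gamma$ sharing a half-edge, which is exactly what consistency rules out — is where the care is needed. The rest is routine estimation of Pochhammer ratios under $k < c\sqrt N$.
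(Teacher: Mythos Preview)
Your strategy is correct and matches the paper's approach: the paper does not give its own argument here but simply cites \cite[Prop~11]{bordenave2015new}, and what you have outlined---binomial expansion of the centered factors, evaluating each term as the probability that a uniform matching contains a prescribed partial matching via Pochhammer ratios, then extracting the extra gain $(3k/\sqrt N)$ from each consistent centered edge by the two-term cancellation you describe while paying only a factor~$2$ per inconsistent centered edge---is precisely Bordenave's proof. Your identification of the main obstacle (checking that consistency guarantees the two half-edges are unconstrained by the rest of $\gamma$, so that toggling $t\in T$ really shifts the conditional probability from $\tfrac{1}{N-1-2j}$ to~$1$ and the centering leaves a residual $O(k/N)$) is exactly the point where the bookkeeping happens.
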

\begin{proof}
    This is an analogue of \cite[Prop~11]{bordenave2015new} and its proof is completely analogous.
 \end{proof}
 Given Proposition \ref{prop_for_mean_value_path} we can 
 analyze $\underline{B}^{\ell}$.  
\subsection{Application of the trace method}
Firstly we will bound the matrix $\underline{B}^{(\ell)}$ using the trace method. Specifically under the convention that $e_{2m+1}=e_{2m}$ we have that for any $m \in \N$,
\begin{multline}\label{trace_of_centered_B}
    \| \underline{B}^{(\ell)}\|^{2m} \ = \
    \| (\underline{B}^{(\ell)})^{2m}\|
    \ = \
    \left\| \left(\underline{B}^{(\ell)} (\underline{B}^{(\ell)})^{*}\right)^{m} \right\| 
    \ \leq \  
    \operatorname{trace} \left\{\left( \underline{B}^{(\ell)} (\underline{B}^{(\ell)})^{*} \right)^m \right\}\\ 
    \ \leq \ 
    \sum_{e_1,e_2, \ldots, e_{2m}} \prod_{t=1}^m (\underline{B}^{(\ell)})_{e_{2i-1},e_{2i}} \underline{B}^{(\ell)}_{e_{2i+1},e_{2i}}
    \ = \ 
    \sum_{\gamma= (\gamma_{1},\gamma_{2}, \ldots \gamma_{2m})}\prod_{i=1}^{2m} \prod_{t=1}^{\ell} \underline{M}_{\gamma_{i,2t-1}, \gamma_{i,2t}}  
\end{multline}
where in the right hand-side of the last equality in \eqref{trace_of_centered_B}, the sum is taken over all paths $(\gamma_1,\gamma_2, \ldots, \gamma_{2m})$ with $\gamma_i=(\gamma_{i,1},\gamma_{i,2}, \ldots, \gamma_{i,2\ell+1})$ and $\gamma_i \in F^{\ell}$, i.e.\ non-backtracking tanglefree paths, and for $i \in [m]$
\begin{align*}
   \gamma_{2i,1}= \gamma_{2i+1,1} \text{ \ and \ } \gamma_{2i-1,2\ell+1}= \gamma_{2i,2\ell+1}
\end{align*} 
under the convention that $\gamma_{2m+1}=\gamma_1$.

Note that the last term of each path, i.e.\ $\gamma_{i,2\ell+1}$ with $i \in [2m]$, does not participate in the product \eqref{trace_of_centered_B}. Let $\gamma_{i,t}=(v_t,j_t)$. Given $v_{i,2\ell+1}$ with $i \in [2m]$, there are fewer than $k_{\max}^m$ paths $\gamma$ having the same contribution in \eqref{trace_of_centered_B}. Thus after performing a change of variables $\gamma'_{2i,t}=\gamma_{2i,2\ell+1-t}$ and since $\underline{M}$ is symmetric, we have the bound
\begin{align*}
\left|    \sum_{\gamma= (\gamma_{1},\gamma_{2}, \ldots, \gamma_{2m})}\prod_{i=1}^{2m} \prod_{t=1}^{\ell} \underline{M}_{\gamma_{i,2t-1}, \gamma_{i,2t}} \right|   \ \leq \  k^m_{\max} \left|\sum_{\gamma \in W_{\ell ,m}} \prod_{i=1}^{2m} \prod_{t=1}^{\ell} \underline{M}_{\gamma_{i,2t-1}, \gamma_{i,2t}}\right|
\end{align*}
where for $k \in \N$, we let $W_{k,m}$ be the set of walks in $\mathcal{E}^{2k\times 2m}$ with $\gamma=(\gamma_1,\gamma_2, \ldots, \gamma_{2m})$ where $\gamma_i=(\gamma_{i,1},\gamma_{i,2},\ldots, \gamma_{i,2k+1})$ and $\gamma_i \in F^{\ell}$, and for $i \in [m]$
\begin{align*}
   v_{2i,1}= v_{2i-1,1} \text{ \ and \ } \gamma_{2i+1,2k+1}= \gamma_{2i,2k}
\end{align*} 
where $\gamma_{2m+1}=\gamma_{1}$ and $\gamma_{i,t}=(v_{i,t},j_{i,t})$.

By Hölder's inequality, we have the following bound:
\begin{align*}%\label{bound_on_mean_value_paths}
  (\E \| \underline{B}^{(\ell)}\|)^{2m} \ \leq \  \E \| \underline{B}^{(\ell)}\|^{2m} \ \leq \ k^m_{\max} \sum_{\gamma \in W_{\ell ,m}} \left|\E \prod_{i=1}^{2m} \prod_{t=1}^{\ell} \underline{M}_{\gamma_{i,2t-1}, \gamma_{i,2t}}\right|
\end{align*}
To bound $\| \underline{B}^{(\ell)}\|$ we will need a better understanding of the walks in $W_{\ell ,m}$ which we now partition as follows.

\subsection{Path analysis} 
Recall that $V_{\gamma}$ and $E_{\gamma}$ are the vertices and edges of a walk as in Definition~\ref{defn_walks_half_edges}. For $d,s \in \N$, let $W_{\ell ,m}(s,d)$ be the set of $\gamma \in W_{\ell ,m}$ with $|V_{\gamma}|= s$ and $|E_{\gamma}|=d$.

For $\gamma \in W_{\ell ,m}(s,d)$ there is a natural ordering of the vertices $v\in V_{\gamma}$. Specifically if $s\geq 2$ then for $v_1,v_2 \in V_{\gamma}$, we declare 
\begin{align}\label{ordering_in_paths}
    v_1 <v_2
\end{align} 
if letting $\gamma_{i,t}=(v_1,j_t)$ and $\gamma_{i',t'}=(v_2,j_{t'})$ be the first half-edges in $\gamma$ adjacent to $v_1$ and $v_2$, either $i<i'$ or else $i=i'$ and $t<t'$.  

Based on this ordering we shall further partition $ W_{\ell ,m}(s,d)$.
Recall the configuration model of Definition~\ref{defn:configuration_model}. For any set of non-negative integers $\{s_{i}\}_{i=k_{\min}}^{k_{\max}}$ with $\sum_{i=k_{\min}}^{k_{\max}}s_i=s$ and any family $\{S_{i}\}_{i=k_{\min}}^{k_{\max}}$ of disjoint subsets of $[s]$ satisfying $[s]=\cup_{i=k_{\min}}^{k_{\max}}S_i$ and $|S_i|=s_{i}$, we define the set of paths
\begin{align*}
    W_{\ell ,m}(s,d, \{s_{i}\}_{i=k_{\min}}^{k_{\max}}, \{S_{i}\}_{i=k_{\min}}^{k_{\max}})
\end{align*}
such that if $\gamma \in  W_{\ell ,m}(s,d, \{s_{i}\}_{i=k_{\min}}^{k_{\max}}, \{S_{i}\}_{i=k_{\min}}^{k_{\max}})$ and if $(v_1,v_2,\ldots, v_{s})$ are the vertices in $V_{\gamma}$ ordered as in \eqref{ordering_in_paths}, then for $i \in \{k_{\min},\ldots, k_{\max}\}$ and $r \in S_i$ the vertex $v_r$ has degree $i$.

For $i \in \{k_{\min},\ldots, k_{\max}\}$, let $n_i$ be the number of vertices with degree~$i$ as in Definition~\ref{defn:configuration_model}. Let $V(\mathbf{ \Tilde{G}}_n)_i$ be the set of vertices with degree~$i$.

Two walks $\gamma, \gamma' \in W_{\ell ,m}(s,d, \{s_{i}\}_{i=k_{\min}}^{k_{\max}}, \{S_{i}\}_{i=k_{\min}}^{k_{\max}})$ are \emph{equivalent}, denoted by $\gamma \sim \gamma'$, if there are permutations $\beta_i$ of the sets $V(\mathbf{ \Tilde{G}}_n)_i$ and $(c_1^{(i)}, \ldots, c^{(i)}_{n_i})$ of $[i]^{n_i}$ such that for $(r,t) \in [2m]\times [2l]$ if $\gamma_{r,t}=(v_{r,t},j_{r,t})$  and $\gamma'_{r,t}=(v_{r,t}',j_{r,t}')$ then if $v_{r,t} \in V(\mathbf{ \Tilde{G}}_n)_i$ we have $v'_{r,t}\in V(\mathbf{ \Tilde{G}}_n)_i$, $\beta_i(v'_{r,t})=v_{r,t} $ and $c_{v_{r,t}}(j_{v_{r,t}})=j'_{v_{r,t}}$. 

Let $\mathcal{W}_{\ell ,m}(s,d, \{s_{i}\}_{i=k_{\min}}^{k_{\max}}, \{S_{i}\}_{i=k_{\min}}^{k_{\max}})$  be the set of equivalence classes of 
\\ $W_{\ell ,m}(s,d, \{s_{i}\}_{i=k_{\min}}^{k_{\max}}, \{S_{i}\}_{i=k_{\min}}^{k_{\max}})$. 

For $\gamma \in W_{\ell ,m}$ let 
\begin{align*}
   \mu(\gamma)= \E \prod_{i=1}^{2m} \prod_{t=1}^{\ell} \underline{M}_{\gamma_{i,2t-1}, \gamma_{i,2t}} 
\end{align*}
If $\gamma \sim \gamma'$ then
%\begin{align*}
 $   \mu(\gamma)=\mu(\gamma')$.
%\end{align*}
The following bounds $\mu(\gamma)$:
\begin{lemma}\label{Lemma_m(g)_bound}
    Let $\gamma \in W_{\ell ,m}$. Suppose ${\ell}m=o(\sqrt{N})$ and $|V_{\gamma}|=s$ and $|E_{\gamma}|= d$. Then
    \begin{align*}
        | \mu (\gamma)| \leq c^{g+m} \left( \frac{1}{N} \right)^d \left( \frac{(2\ell m)^2}{N} \right)^{(d-2g-(\ell+2)m)_+}
    \end{align*}
    where $g=d-s+1$, and $c$ is an absolute constant, and    $x_+=\max\{x,0\}$ for $x \in \R$.
\end{lemma}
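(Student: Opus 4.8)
The plan is to follow the strategy of \cite[Lem~17]{bordenave2015new} (sometimes stated as the bound on $\mu(\gamma)$ for centered walks), adapting it to our degree sequence. Recall from Proposition~\ref{prop_for_mean_value_path} that for a single factorized walk $\gamma \in \mathcal{E}^{2k}$ one has
\[
\left| \E \prod_{t=1}^{k_0} \underline{M}_{\gamma_{2t-1,2t}} \prod_{t=k_0+1}^{k} M_{\gamma_{2t-1,2t}} \right| \ \leq \ 2^{b} N^{-d'} \left( 3k/\sqrt{N}\right)^{a_1},
\]
where $d'=|E_\gamma|$, $b$ is the number of inconsistent centered edges, and $a_1$ the number of consistent centered edges. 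Here all $2\ell m$ edge-slots of $\gamma \in W_{\ell,m}$ are centered (every factor carries an underline), so applying Proposition~\ref{prop_for_mean_value_path} with $k_0 = k = \ell m$ gives $|\mu(\gamma)| \leq 2^b N^{-d} (3\ell m/\sqrt N)^{a_1}$ where now $d = |E_\gamma|$, $b$ = number of inconsistent edges of $\gamma$, and $a_1$ = number of consistent edges. The first step is therefore to record this; the factor $2^b \leq 2^{2m + 2g}$ is absorbed into $c^{g+m}$ once we bound $b$.

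The heart of the argument is a combinatorial lower bound on $a_1$, the number of consistent edges, in terms of $d$, $g = d - s + 1$ (the "excess", i.e.\ the number of independent cycles of the multigraph $(V_\gamma, E_\gamma)$), and $m$. The key structural fact, exactly as in \cite{bordenave2015new}, is that a consistent edge $\{e,f\}$ is one that is traversed with all of $e$, $f$ and $\{e,f\}$ sharing the same multiplicity in $\gamma$; edges that are "first visits" contributing to building the spanning structure, or edges near the $2m$ splice points $v_{2i,1}=v_{2i-1,1}$, or edges adjacent to a cycle-closing step, may be inconsistent. One counts: each of the $2m$ constituent tanglefree paths $\gamma_i$ has length $\ell$, so $2\ell m$ edge-slots total; the number of distinct edges is $d$; each distinct edge is "charged" as inconsistent only if it lies within distance $O(1)$ of one of the $O(g+m)$ special locations (cycle edges and splice points), and because each $\gamma_i$ is tanglefree its $\ell$-neighbourhood contains at most one cycle, limiting how many edges near a given special location can be inconsistent to $O(\ell)$. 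This yields $a_1 \geq d - 2g - (\ell+2)m$, hence $a_1 \geq (d - 2g - (\ell+2)m)_+$, and $b \leq 2g + (\ell+2)m \leq 2g + 3\ell m$, which is subsumed by $c^{g+m}$ (using $\ell m = o(\sqrt N)$ to write $2^{O(\ell m)}$ as part of the claimed $( (2\ell m)^2/N)$-type factor is not needed—$2^b$ with $b=O(g+m)$ plus the extra $2^{O(\ell m)}$ is handled because the $(3\ell m/\sqrt N)^{a_1}$ term dominates). Plugging the bound on $a_1$ into $(3\ell m/\sqrt N)^{a_1}$ and noting $(3\ell m)^2 \leq (2\ell m)^2 \cdot \text{const}$ after adjusting $c$, we obtain
\[
|\mu(\gamma)| \ \leq \ c^{g+m}\, N^{-d}\, \left( (2\ell m)^2/N \right)^{(d - 2g - (\ell+2)m)_+},
\]
as required.

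The main obstacle is the combinatorial bookkeeping in the second step: precisely identifying which edges are forced to be consistent and proving the count $a_1 \geq d - 2g - (\ell+2)m$. This requires carefully using (i) the tanglefree property of each constituent path $\gamma_i$ (at most one cycle in any $\ell$-ball, which bounds the number of inconsistent edges "caused" by a single cycle by $O(\ell)$), (ii) the global structure that $\gamma$ as a whole has excess $g = d-s+1$ so at most $g$ cycle-closing steps, and (iii) the $2m$ identifications at the splice points each of which can spoil consistency of at most $O(1)$ further edges. Since the excerpt explicitly says the arguments of \cite{bordenave2015new} are "completely adaptable," I would state the degree-sequence-agnostic version of the counting lemma and refer to \cite[proof of Lem~17]{bordenave2015new} for the details, pointing out only that the sole place the degrees enter—the number of choices of half-edge labels at each vertex, bounded by $k_{\max}$—is harmless since $k_{\max}$ is a constant and gets absorbed into $c$.
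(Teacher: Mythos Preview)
Your approach---apply Proposition~\ref{prop_for_mean_value_path} with all factors centred and then invoke the combinatorics of \cite[Lem~17]{bordenave2015new}---is exactly the paper's: its proof is literally ``follows from Proposition~\ref{prop_for_mean_value_path} and is analogous to \cite[Lem~17]{bordenave2015new} and thus omitted.'' So at the level of strategy there is nothing to compare.

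However, the sketch you supply has real bookkeeping gaps. The most serious is the treatment of $2^{b}$. You write $b\le 2g+(\ell+2)m$ and then assert this is ``subsumed by $c^{g+m}$''; it is not, because $\ell=\Theta(\log n)$ grows, so $2^{(\ell+2)m}$ cannot be bounded by $c^{m}$ for any absolute $c$. Your parenthetical hedge (``$2^{O(\ell m)}$ is handled because the $(3\ell m/\sqrt N)^{a_{1}}$ term dominates'') does not rescue this: when $d\le 2g+(\ell+2)m$ the target exponent is~$0$ and there is no small factor to absorb anything. The argument in \cite{bordenave2015new} actually establishes $b=O(g+m)$ directly: inconsistency of an edge forces one of its half-edges to appear in a second matched pair, and such ``branching'' half-edges are in bijection with the excess edges and the $2m$ concatenation points, each contributing $O(1)$ inconsistent \emph{slots} (not $O(\ell)$). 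Your heuristic ``$O(g+m)$ special locations each spoiling $O(\ell)$ edges'' would give $O(\ell(g+m))$ bad edges, which neither matches the $2g+(\ell+2)m$ you then write down nor suffices for the lemma.

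A second, smaller issue: $(3k/\sqrt N)^{a_{1}}=((3k)^{2}/N)^{a_{1}/2}$, so to land on the exponent $(d-2g-(\ell+2)m)_{+}$ for $(2\ell m)^{2}/N$ you need $a_{1}\ge 2(d-2g-(\ell+2)m)$, not the bound you state; and the constant ratio $(3k)^{2}/(2\ell m)^{2}$ raised to that exponent is again of order $c^{\ell m}$, not $c^{g+m}$. (Here $k=2\ell m$, not $\ell m$ as you wrote, since $\gamma$ has $2m$ sub-walks of length~$\ell$.) These points are all handled in Bordenave's original counting, so the fix is simply to reproduce that argument more faithfully rather than to reverse-engineer the target exponent.
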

\begin{proof}
    The proof follows from Proposition~\ref{prop_for_mean_value_path} and is  analogous to the proof of \cite[Lem~17]{bordenave2015new} and thus omitted. 
\end{proof}

We now bound the cardinality of $\mathcal{W}_{\ell ,m}(s,d, \{s_{i}\}_{i=k_{\min}}^{k_{\max}}, \{S_{i}\}_{i=k_{\min}}^{k_{\max}})$.

\begin{lemma}\label{Lemma_bound_on_eq_classes}
    Fix an equivalence class among the paths $\mathcal{W}_{\ell ,m}(s,d, \{s_{i}\}_{i=k_{\min}}^{k_{\max}}, \{S_{i}\}_{i=k_{\min}}^{k_{\max}})$.  
    Let $g=d-s+1$. If $g<0$ then $\mathcal{W}_{\ell ,m}(s,d, \{s_{i}\}_{i=k_{\min}}^{k_{\max}}, \{S_{i}\}_{i=k_{\min}}^{k_{\max}})$ is empty. 
    Otherwise
    \begin{align*}
        \left| \mathcal{W}_{\ell ,m}(s,d, \{s_{i}\}_{i=k_{\min}}^{k_{\max}}, \{S_{i}\}_{i=k_{\min}}^{k_{\max}}) \right| \ \leq \ (4\ell m )^{6mg+6m}.
    \end{align*}
    \end{lemma}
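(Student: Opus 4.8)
I would first dispose of the degenerate case $g<0$. Each path $\gamma_i$ occurring in a walk $\gamma\in W_{\ell,m}$ is connected, and the gluing conditions $v_{2i,1}=v_{2i-1,1}$ together with $\gamma_{2i+1,2k+1}=\gamma_{2i,2k}$ (and $\gamma_{2m+1}=\gamma_1$) chain the $2m$ paths into a single connected object, so the multigraph $(V_\gamma,E_\gamma)$ of Definition~\ref{defn_walks_half_edges} is connected. A connected multigraph on $s$ vertices has at least $s-1$ edges, hence $d\ge s-1$, i.e.\ $g=d-s+1\ge 0$; thus the set of equivalence classes is empty when $g<0$.

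For $g\ge 0$ the bound is an encoding argument in the spirit of \cite[Lem~18]{bordenave2015new}, adapted to non-constant degrees. The point is that an equivalence class in $\mathcal{W}_{\ell,m}(s,d,\{s_i\}_{i=k_{\min}}^{k_{\max}},\{S_i\}_{i=k_{\min}}^{k_{\max}})$ is completely determined by a short ``script'' read off from a representative by exploring it in reading order (all of $\gamma_1$ left to right, then $\gamma_2$, \dots, then $\gamma_{2m}$) while maintaining the set of already-discovered vertices and edges. At each of the $\le 4\ell m$ half-edge transitions the exploration either (i) reveals a new vertex, or extends the currently traced tree by a new edge in the canonical non-backtracking way — in which case, since the equivalence relation permutes vertices within each degree class and permutes half-edge labels at each vertex, there is a canonical normalization and nothing needs to be recorded; or (ii) performs a \emph{surprise}: it closes a cycle onto an already-seen vertex, re-enters already-discovered territory, or sits at one of the $O(m)$ path-gluing junctions. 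For each surprise I would record the index in $\{1,\dots,4\ell m\}$ of the revisited position together with the bounded local data (which of the $\le k_{\max}$ half-edges at that vertex is used); given the script, the remainder of the walk is forced by non-backtracking plus the normalization.

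The heart of the matter is bounding the number of surprises. Cycle-closing first-traversals number exactly $g=d-(s-1)$ over the whole walk; the path-gluing junctions contribute $O(m)$; and the remaining surprises are re-traversals of already-known edges, where the tanglefree hypothesis $\gamma_i\in F^\ell$ is essential: the subgraph spanned by a single path has at most one cycle, so each $\gamma_i$ is a tree excursion that may wind around a single cycle before exiting along one branch, a shape specified by $O(1)$ parameters (number of windings, exit branch), each in a set of size $\le\ell\le 4\ell m$; summing over the $2m$ paths gives $O(m)$ further surprises. Altogether the number of surprises is at most $6m(g+1)$ after bookkeeping, each carrying $\le 4\ell m$ choices (the $k_{\max}^{O(m)}$ local factors being absorbed), which yields $|\mathcal{W}_{\ell,m}(s,d,\{s_i\}_{i=k_{\min}}^{k_{\max}},\{S_i\}_{i=k_{\min}}^{k_{\max}})|\le (4\ell m)^{6mg+6m}$.

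I expect the main obstacle to be the careful accounting in the last step: when the $2m$ tanglefree paths are glued they share vertices and edges, so the ``one cycle per path'' bound must be reconciled with cycles formed across paths, and one must check that a later path re-traversing structure discovered earlier still costs only $O(1)$ surprises per path and fits inside the $6m(g+1)$ budget. This is precisely the content of the corresponding step of \cite{bordenave2015new}; since that argument uses only the combinatorics of non-backtracking tanglefree walks and not $d$-regularity, it transfers to our configuration model with the fixed degree $d$ replaced by the bounded range $\{k_{\min},\dots,k_{\max}\}$, affecting only constants.
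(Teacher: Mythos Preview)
Your proposal is correct and follows the paper's approach exactly: the paper's own proof reads in its entirety ``The proof is analogous to the proof of \cite[Lem~16]{bordenave2015new} and thus omitted,'' and what you have sketched is precisely Bordenave's encoding/exploration argument for equivalence classes of tanglefree non-backtracking walks, with the observation (which you make explicit) that bounded degrees in $\{k_{\min},\dots,k_{\max}\}$ rather than a fixed $d$ affects only absorbed constants. Your concern about the cross-path accounting is real but is handled in \cite{bordenave2015new} exactly as you anticipate; note only the minor citation discrepancy (the paper points to Lemma~16 rather than Lemma~18).
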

\begin{proof}
    The proof is analogous to the proof of \cite[Lem~16]{bordenave2015new} and thus omitted. 
\end{proof}
The following bounds the number of elements in an equivalence class:
\begin{lemma}\label{Lemma_cardinality_of_eq_class}
    For $\gamma \in W_{\ell ,m}(s,d, \{s_{i}\}_{i=k_{\min}}^{k_{\max}}, \{S_{i}\}_{i=k_{\min}}^{k_{\max}})$
the number of paths equivalent to $\gamma$ is bounded by the following, where $g=d-s+1$:
    \begin{align*}
        \prod_{i =k_{\min}}^{k_{\max}} (i (i-1) n_i)^{s_i} (k_{\max}-1)^{2g-1}
    \end{align*}
\end{lemma}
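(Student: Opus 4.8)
I would count the elements of the equivalence class of a fixed $\gamma$ by replaying the walk and tallying the admissible choices at each step, in the spirit of the Füredi–Komlós bookkeeping used in \cite{bordenave2015new}. By the definition of $\sim$, a walk $\gamma'$ equivalent to $\gamma$ is determined by exactly two pieces of data: (i) for each degree class $i\in\{k_{\min},\ldots,k_{\max}\}$, the images under $\beta_i$ of the abstract vertices of $V_{\gamma}$ of degree $i$ — that is, an injection of a set of size $s_i$ into $V(\mathbf{\Tilde{G}}_n)_i$, which has $n_i$ elements; and (ii) for each vertex $v\in V_{\gamma}$ of degree $i$, an injection into $[i]$ of the set of half-edge labels that actually occur at $v$ along $\gamma$ (labels at $v$ not occurring in $\gamma$ are invisible to $\gamma'$). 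Part (i) contributes at most $\prod_i n_i^{s_i}$, so everything reduces to bounding the number of label assignments in (ii).

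For (ii) I would process the $2m$ sub-walks of $\gamma$ in order, and within each sub-walk the half-edges $\gamma_{i,1},\gamma_{i,2},\ldots$ in order, maintaining the set of abstract vertices and half-edge labels already committed. Every step is either a matching step $\{\gamma_{i,2t-1},\gamma_{i,2t}\}$ (an edge of $E_{\gamma}$) or a non-backtracking step $\gamma_{i,2t}\to\gamma_{i,2t+1}$ at a single vertex. The classification I would use: when a matching step traverses an edge of $E_{\gamma}$ for the first time and its far endpoint lies at a vertex $v$ of degree $i$ not seen before, that endpoint's label at $v$ is one of at most $i$ possibilities and the label chosen by the ensuing non-backtracking step is one of at most $i-1$ further possibilities; pairing this with the $n_i$ from (i) gives a budget $i(i-1)n_i$ for each of the $s_i$ first-visited vertices of degree $i$. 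Every other event that commits a previously uncommitted half-edge label — a first traversal of an edge of $E_\gamma$ whose far endpoint sits at an already-seen vertex but carries a new label, or a non-backtracking step at an already-seen vertex that departs along a new label — is forced to choose a label distinct from at least one already committed at that vertex, hence has at most $k_{\max}-1$ choices.

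It then remains to bound the number of such ``excess'' label-committing events by $2g-1$, where $g=d-s+1$ is the cycle rank of the connected multigraph $(V_{\gamma},E_{\gamma})$. This is the main obstacle and the only genuinely structural point: it is here that one invokes that each sub-walk is non-backtracking and that $\mathbf{\Tilde{G}}_n$ is $\ell$-tanglefree, together with the gluing conditions relating the endpoints of consecutive sub-walks, exactly as in the counting of \cite[Lem~16]{bordenave2015new}. Heuristically, a spanning tree of $(V_{\gamma},E_{\gamma})$ accounts for $s-1$ edges whose first traversals are the ``clean'' first-visit events already charged to the $i(i-1)n_i$ budgets; the remaining $g$ edges of $E_{\gamma}$, together with the $m$ endpoint gluings, can each generate only a bounded number of extra committed labels, and the careful accounting of \cite{bordenave2015new} — transported verbatim, with every vertex degree bounded by $k_{\max}$ — reduces the total to $2g-1$. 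Multiplying (i) and (ii) then yields
\[
 \prod_{i=k_{\min}}^{k_{\max}} n_i^{s_i}\cdot \prod_{i=k_{\min}}^{k_{\max}}\bigl(i(i-1)\bigr)^{s_i}\cdot (k_{\max}-1)^{2g-1}
 \;=\;\prod_{i=k_{\min}}^{k_{\max}}\bigl(i(i-1)n_i\bigr)^{s_i}(k_{\max}-1)^{2g-1},
\]
as claimed. Since the $d$-regular argument in \cite{bordenave2015new} adapts with only cosmetic changes, I would write out only the modifications needed to accommodate non-uniform degrees and otherwise defer to that reference.
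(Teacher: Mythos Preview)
Your approach is essentially the paper's: the paper's own proof is the single line ``analogous to the proof of \cite[Lem~15]{bordenave2015new},'' and you are sketching precisely that adaptation before deferring to the same reference. Two small corrections: the relevant lemma in \cite{bordenave2015new} is Lemma~15, not Lemma~16 (the latter is the analogue of Lemma~\ref{Lemma_bound_on_eq_classes} here); and the $2g-1$ count of excess committed labels is a purely combinatorial fact about the multigraph $(V_\gamma,E_\gamma)$, so it does not require $\ell$-tanglefreeness of $\mathbf{\Tilde G}_n$ --- tanglefreeness of the sub-walks is what is used, and it enters more substantively in the companion bound on the number of equivalence classes rather than here.
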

\begin{proof}
    This is analogous to the proof of \cite[Lem~15]{bordenave2015new}.
\end{proof}

We now prove the bound on $\underline{B}$.

    Recall the parameter $a$ of Theorem~\ref{thm:probabilistic_claim}. 
    By the  multinomial theorem 
    \begin{align*}
        \left(\sum_{i=k_{\min}}^{k_{\max}} i(i-1)n_i\right)^s = \sum_{s_{k_{\min}, \ldots, k_{\max}\geq 0 : \sum_{i}s_i=s}}
        %{s \choose s_{k_{\min}},\ldots, s_{k_{\max}} } 
        \binom{s}{s_{k_{\min}},\ldots, s_{k_{\max}} } 
        \prod_{i=k_{\min}}^{k_{\max}}  (i(i-1)n_i)^{s_i}.
    \end{align*}

It is implied that 
\begin{align*}
    (a+o(1))^s= N^{-s} \sum_{s_{k_{\min}, \ldots, k_{\max}\geq 0 : \sum_{i}s_i=s}}
    %{s \choose s_{k_{\min}},\ldots, s_{k_{\max}} } 
    \binom{s}{s_{k_{\min}},\ldots, s_{k_{\max}} } 
    \prod_{i=k_{\min}}^{k_{\max}}  (i(i-1)n_i)^{s_i}
\end{align*}
Thus by combining Lemmas~\ref{Lemma_bound_on_eq_classes}~\ref{Lemma_cardinality_of_eq_class}~and~\ref{Lemma_m(g)_bound} we conclude that for $m= \lfloor\frac{\log n}{13\log \log n}\rfloor$  
\begin{multline*}
   \sum_{\gamma \in W_{\ell ,m}} \left|\E \prod_{i=1}^{2m} \prod_{t=1}^{\ell} \underline{M}_{\gamma_{i,2t-1}, \gamma_{i,2t}}\right|
   \\ 
   \leq \  
   k_{\max}^{m} \sum_{s}  \sum_{d\geq s-1} \sum_{\{s_i\}_{i=k_{\min}}^{k_{\max}}} \sum_{\{S_i\}_{i=k_{\min}}^{k_{\max}}}\Bigg[ c^{g+m} \left( \frac{1}{N} \right)^d \left( \frac{(2\ell m)^2}{N} \right)^{(d-2g-(\ell+2)m)_+} \\  
   \ \ \ \ \ \ \ \ \ \ \ \ \ \ \cdot (4\ell m )^{6mg+6m} \prod_{i =k_{\min}}^{k_{\max}} (i (i-1) n_i)^{s_i} (k_{\max}-1)^{2g-1}\Bigg]
   \\ 
   = \  k_{\max}^m  N  \sum_{s}  \sum_{d\geq s-1}\Bigg[N^{-s}\left(\sum_{i=k_{\min}}^{k_{\max}} i(i-1)n_i\right)^s \cdot  (4\ell m )^{6mg+6m} N^{-g} c^{g+m} \\ 
   \ \ \ \ \ \ \cdot \left( \frac{(2\ell m)^2}{N} \right)^{(d-2g-(\ell+2)m)_+} (k_{\max}-1)^{2g-1}\Bigg] \\ =   k_{\max}^m  N  \sum_{s}  \sum_{d\geq s-1}(a+o(1))^s (4\ell m )^{6mg+6m} c^{g+m} \left( \frac{(2\ell m)^2}{N} \right)^{(d-2g-(\ell+2)m)_+} (k_{\max}-1)^{2g-1}   N^{-g}
\end{multline*}
By performing a change of variable $d=s+g-1$, for some constant $C>0$ we have: 
\begin{align*}%\label{sum }%not cross referenced
  ( \E  \| \underline{B}^{(\ell)}\|)^{2m}\leq N (C)^m \sum_{s=1}^{\infty}\sum_{g=1}^{\infty} (a+o(1))^s  (4\ell m )^{6mg+6m}  \left( \frac{(2\ell m)^2}{N} \right)^{(s-g-(\ell+2)m-1)_+} \left(\frac{C}{N}\right)^{g} \\= S_1+S_2+S_3,  
\end{align*}
where $S_1$ is the sum over $\{1
\leq s \leq \ell+2, g\geq 0 \}$, and $S_2$ is the sum taken over $\{s\geq \ell+3, g\leq  s-(\ell+2)m-1)\}$ and $S_3$ is the sum taken over the remaining terms. 

For $S_1$ we have that for some new constant $C>0$,
\begin{align}\label{bound_on_S_1}
    S_1
     \ \leq \ 
    N (a+o(1))^{\ell m} (C4\ell m )^{6m} \sum_{g=1}^{\infty } \left(\frac{(C\ell m)^{6m}}{N}\right)^{g} 
\end{align}
For sufficiently large~$n$ and by our choice of $m$, we have $\frac{(C\ell m)^{6m}}{N} \leq n^{-\frac{1}{13}}$. So 
 the series in the right hand-side of \eqref{bound_on_S_1} converges. Similar bounds can be proven for $S_2$ and $S_3$ as in the proof of \cite[Prop~14]{bordenave2015new}. We conclude that for some constant $C>0$, 
\begin{align}\label{bound_on_B_^(l)}
      \E  \| \underline{B}^{(\ell)}\|  
      \ \leq \ 
      \ell^{C}   (a+o(1))^{\frac{\ell}{2}}. 
\end{align}
Moreover for any $k \leq \ell$ one can prove $\E \| \underline{B}^{(\ell)}\|^k\leq k^C (a+o(1))^{\frac{k}{2}} $ similarly to \eqref{bound_on_B_^(l)}. Thus we conclude that
\begin{align}\label{bound on Bx}
 \E   \|\underline{B}^{(k)}\chi\| 
       \ \leq \ 
\E \|\underline{B}^{(k)}\| \sqrt{N}
      \ \leq \ 
(a+o(1))^{\frac{\ell}{2}} \sqrt{N} \ell^C.
\end{align}

Lastly, one similarly bounds $\frac{1}{N}\sum_{k=1}^{\ell} \|R^{(\ell)}_k\|$ as in the proof of \cite[Prop~18]{bordenave2015new}, with similar adjustments to those made in the proof of \eqref{bound_on_B_^(l)}. Specifically for some constant $C>0$,
\begin{align}\label{bound_on_R_k}
    \E \frac{1}{N}\sum_{k=1}^{\ell} \|R^{(\ell)}_k\| 
          \ \leq \ 
\ell^C (a+o(1))^{\frac{\ell}{2}}.
\end{align}

\subsection{Proof of Proposition~\ref{prop:second_eigenvalue}}
    By Proposition~\ref{concusion_for_simple}.\ref{tanglefree simple} and \eqref{matrix_identity_for_B^(l)} we have:
    \begin{multline*}
       \sup_{\|x\|=1:\langle (B^{*}_{G_n})^\ell\Tilde{\psi},x \rangle=0}|B^{\ell}_{G_n}  x| 
       \\
       \leq \ \  \|\underline{B}^{(\ell)}_{G_n}\|+ \frac{1}{N}\sum_{k=1}^{\ell} \|\underline{B}^{(k-1)}_{G_n}\chi\| \sup_{\|x\|=1:\langle (B^{*}_{G_n})^\ell\Tilde{\psi},x \rangle=0}|\langle\Tilde{\psi}^*, B^{(\ell-k)}_{G_n}x\rangle | +  \frac{1}{N}\sum_{k=1}^{\ell}\|R_k^{(\ell)}\| 
    \end{multline*}
    By Markov's inequality, \eqref{bound_on_B_^(l)}, \eqref{bound_on_R_k} and Theorem~\ref{thm:graphic,from multi to simple} there exists a large constant $C>0$ such that with high probability for all $k\in [\ell]$:
        \begin{align*}
      \|\underline{B}^{(\ell)}_{G_n}\|+   \frac{1}{N}\sum_{k=1}^{\ell}\|R_k^{(\ell)} \| \ \leq \ 
      \ell^C (a+o(1))^{\frac{\ell}{2}}   
      \\
      \|\underline{B}^{(k-1)}_{G_n}\chi\| \leq \ell^C (a+o(1))^{\frac{\ell}{2}}  \sqrt{N}
  \end{align*}
 By Proposition~\ref{concusion_for_simple}, Theorem~\ref{thm:graphic,from multi to simple}
 and \eqref{B^lx,}, 
  with high probability  for all $k \in [\ell]$:
  \begin{align*}
      \sup_{\|x\|=1:\langle (B^{*}_{G_n})^\ell\Tilde{\psi},x \rangle=0}|\langle\Tilde{\psi}^*, B^{(\ell-k)}_{G_n}x\rangle|=  \sup_{\|x\|=1:\langle (B^{*}_{G_n})^\ell\Tilde{\psi},x \rangle=0} | \langle (B^{*}_{G_n})^{\ell-k}\Tilde{\psi},x \rangle|\leq  a^{\frac{\ell-k}{2}} \sqrt{n}\log^Cn 
  \end{align*}
  The proof follows  from the results above.
\qed
\subsection{Open Problems}
Next we present some open problems that come naturally from our approach.
\begin{problem}
    In Equation~\eqref{defn:random_graphs_model}, can we improve the rate of convergence so that the rate would be bellow polynomial?
\end{problem}

\begin{problem}
    This paper estimates only the leading eigenvalue, can similar results be obtained for all the eigenvalues? For example, is there a limiting distribution for the empirical spectral distribution of $B$?
\end{problem}

\begin{problem}
    In this paper we assumed that the degrees of the graph are bounded from above, what if we remove this assumption (but we require finite expectation and variance)?
\end{problem}

\begin{problem}
    What if we allow vertices of degree 1, but condition on the event that the unimodular Galton Watson process does not extinct? 
\end{problem}

\begin{problem}
    Our theorem states that for every $\epsilon > 0$ and $\alpha \in (1, 2r-1)$, there exists a large enough $n$ so that there is a graph $G$ with $n$ vertices, whose universal cover has a growth rate $\alpha \pm \epsilon$.
    What if we fix $n$, and wish to understand the structure of the graph whose universal cover is closest to $\alpha$, among all graphs with $n$ vertices?
\end{problem}

\bibliographystyle{abbrv} 
\bibliography{bib}

@article{coulon2026growth,
  title={On the growth spectrum of hyperbolic groups},
  author={Coulon, R{\'e}mi and Louvaris, Michail and Wise, Daniel T and Yehuda, Gal},
  journal={arXiv preprint arXiv:2607.02147},
  year={2026}
}

@article{louvaris2025subgroups,
  title={Subgroups of a free group with every growth rate},
  author={Louvaris, Michail and Wise, Daniel T and Yehuda, Gal},
  journal={arXiv preprint arXiv:2505.10650},
  year={2025}
}

@article{hashimoto1989zeta,
  title={Zeta functions of finite graphs and representations of p-adic groups},
  author={Hashimoto, Ki-ichiro},
  journal={Advances in Studies in Pure Mathematics},
  year={1989}
}

@book{terras2011zeta,
  title={Zeta Functions of Graphs: A Stroll through the Garden},
  author={Terras, Audrey},
  year={2011},
  publisher={Cambridge University Press}
}

@article{krzakala2013spectral,
  title={Spectral redemption in clustering sparse networks},
  author={Krzakala, Florent and Moore, Cristopher and Mossel, Elchanan and Neeman, Joe and Sly, Allan and Zdeborov{\'a}, Lenka and Zhang, Pan},
  journal={PNAS},
  year={2013}
}

@article{karrer2014percolation,
  title={Percolation on sparse networks},
  author={Karrer, Brian and Newman, M. E. J.},
  journal={Physical Review E},
  year={2014}
}

@book{bollobas2001random,
  title={Random Graphs},
  author={Bollob{\'a}s, B{\'e}la},
  publisher={Cambridge University Press},
  year={2001}
}

@book{van2016random,
  title={Random Graphs and Complex Networks},
  author={van der Hofstad, Remco},
  year={2016},
  publisher={Cambridge University Press}
}

@article{bordenave2016lecture,
  title={Lecture notes on random graphs and probabilistic combinatorial optimization
!!draft in construction!!},
  journal={in preparation},
  author={Bordenave, Charles},
  year={2016}
}

@article{gulikers2016non,
  title={Non-backtracking spectrum of degree-corrected stochastic block models},
  author={Gulikers, Lennart and Lelarge, Marc and Massouli{\'e}, Laurent},
  journal={arXiv preprint arXiv:1609.02487},
  year={2016}
}

@inproceedings{bordenave2015non,
  title={Non-backtracking spectrum of random graphs: community detection and non-regular ramanujan graphs},
  author={Bordenave, Charles and Lelarge, Marc and Massouli{\'e}, Laurent},
  booktitle={2015 IEEE 56th Annual Symposium on Foundations of Computer Science},
  pages={1347--1357},
  year={2015},
  organization={IEEE}
}

@book{vershynin2018high,
  title={High-dimensional probability: An introduction with applications in data science},
  author={Vershynin, Roman},
  volume={47},
  year={2018},
  publisher={Cambridge university press}
}

@article{janson2009probability,
  title={The probability that a random multigraph is simple},
  author={Janson, Svante},
  journal={Combinatorics, Probability and Computing},
  volume={18},
  number={1-2},
  pages={205--225},
  year={2009},
  publisher={Cambridge University Press}
}

@article{bordenave2015new,
  title={A new proof of {F}riedman's second eigenvalue Theorem and its extension to random lifts},
  author={Bordenave, Charles},
  journal={arXiv preprint arXiv:1502.04482},
  year={2015}
}

@article{stephan2022non,
  title={Non-backtracking spectra of weighted inhomogeneous random graphs},
  author={Stephan, Ludovic and Massouli{\'e}, Laurent},
  journal={Mathematical Statistics and Learning},
  volume={5},
  number={3},
  pages={201--271},
  year={2022}
}

@article{benaych2020spectral,
  title={Spectral radii of sparse random matrices},
  author={Benaych-Georges, Florent and Bordenave, Charles and Knowles, Antti},
  year={2020}
}

@article{bordenave2023detection,
  title={Detection thresholds in very sparse matrix completion},
  author={Bordenave, Charles and Coste, Simon and Nadakuditi, Raj Rao},
  journal={Foundations of Computational Mathematics},
  volume={23},
  number={5},
  pages={1619--1743},
  year={2023},
  publisher={Springer}
}

@article{dumitriu2022extreme,
  title={Extreme singular values of inhomogeneous sparse random rectangular matrices},
  author={Dumitriu, Ioana and Zhu, Yizhe},
  journal={arXiv preprint arXiv:2209.12271},
  year={2022}
}

\end{document}